\def\Z{\mathbb{Z}}
\def\R{\mathbb{R}}
\def\C{\mathbb{C}}
\def\P{\mathbb{P}}
\def\d{\partial}
\newtheorem{thm}{Theorem}
\newtheorem*{thm*}{Theorem}
\newtheorem{lem}{Lemma}[section]
\newtheorem{prop}[lem]{Proposition}
\newtheorem{conj}[lem]{Conjecture}
\theoremstyle{definition}
\newtheorem{rem}[lem]{Remark}
\newtheorem{exa}[lem]{Example}
\newtheorem{defn}[lem]{Definition}
\newtheorem{assu}{Assumption}
\newcommand{\comm}[1]{}
\def\Aa{\mathcal{A}}
\DeclareMathOperator{\vol}{vol}
\DeclareMathOperator{\Vol}{Vol}
\DeclareMathOperator{\Gr}{Gr}
\DeclareMathOperator{\inj}{inj}
\DeclareMathOperator{\End}{End}
\DeclareMathOperator{\Hom}{Hom}
\DeclareMathOperator{\rk}{rk}
\DeclareMathOperator{\Sp}{Sp}
\DeclareMathOperator{\Tr}{Tr}
\title{Lyapunov exponents of the Brownian motion on a K\"ahler manifold}
\author{Jeremy Daniel and Bertrand Deroin}
\date{}
\begin{document}

\maketitle

\abstract{If $E$ is a flat bundle of rank $r$ over a K\"ahler manifold $X$, we define the Lyapunov spectrum of $E$: a set of $r$ numbers controlling the growth of flat sections of $E$, along Brownian trajectories. We show how to compute these numbers, by using harmonic measures on the foliated space $\mathbb{P}(E)$. Then, in the case where $X$ is compact, we prove a general inequality relating the Lyapunov exponents and the degrees of holomorphic subbundles of $E$ and we discuss the equality case.}

\section*{Introduction}

Let $(X,g)$ be a K\"ahler manifold of dimension $d$ and let $(E,h)$ be a complex flat vector bundle of rank $r$ over $X$. If $X$ and $E$ satisfies certain assumptions of bounded geometry, then Kingman subadditive theorem shows that the growth of flat sections along Brownian trajectories on $X$ is controlled by a number $\lambda$, called the first Lyapunov exponent of $E$. More generally, considering the exterior powers of $E$, one can define $r$ numbers $\lambda_1 \geq \dots \geq \lambda_r$. This set is called the Lyapunov spectrum of $E$ and can also be defined by an application of Oseledets multiplicative ergodic theorem. By the symmetry of the Brownian motion, the spectrum is symmetric with respect to $0$, namely $\lambda_{r-k} = -\lambda_k$.

Pioneer work \cite{KoZor}, formalized in \cite{Forni}, shows that for variations of Hodge structures of weight $1$ over curves of finite type, the sum of the positive Lyapunov exponents equals the degree of the Hodge bundle, up to some normalization. Here, the dynamics on $X$ is given by the geodesic flow, rather than the Brownian motion. This formula has been further studied in \cite{EskKoZor_cyclic}, \cite{EskKoZor_teichmuller}; it has been generalized in \cite{Filip} for certain variations of Hodge structures of weight $2$ and in \cite{KapMol} over complex hyperbolic manifolds of higher dimension.

In fact, it has been observed by the second author -- in his PhD thesis (see e.g. \cite{Der_Levi}) and more recently in his work with R. Dujardin \cite{DerDuj} -- that in the case of a rank $2$ bundle over a curve $C$, there is a cohomological interpretation for the top Lyapunov exponent $\lambda_1$. Assume for the sake of simplicity that $C$ is a compact curve and consider the projectivized bundle $\mathbb{P}(E)$ over $C$. This space carries a foliation by curves, which is induced by the flat structure on $E$. One can show that there exists a harmonic current $T$ on $\mathbb{P}(E)$ of bidimension $(1,1)$, which is positive on the leaves of the foliation. Such a current is not closed in general but still defines a homology class in the dual of $H^2(X,\C)$, thanks to the equality of Bott-Chern and de Rham cohomologies on the K\"ahler manifold $\mathbb{P}(E)$. The formula
\begin{equation}\label{lambda1}  \lambda_1 = \frac{[T] \cdot \mathcal{O}(1)}{[T]\cdot [\omega]} \end{equation}
then holds, where $\mathcal{O}(1)$ is the anti-canonical line bundle on $\mathbb{P}(E)$ and $\omega$ is the pullback of the K\"ahler form. 

The goal of this work is threefold. First, we generalize the above formalism to a K\"ahler manifold of higher dimension and to a flat bundle of higher rank: for every $k=1,\dots,r$, we construct a pluriharmonic current $T_k$ of bidimension $(1,1)$ on the Grassmann bundle $\Gr(k,E)$, which is positive on the leaves of the foliation induced by the flat structure. Denoting by $\mathcal{O}(1)$ the anti-tautological line bundle on $\Gr(k,E)$ -- if $P$ is in $\Gr(k,E)$, then $\mathcal{O}(1) = \Lambda^k P^\ast$ at the point $P$ -- we show that
\begin{equation}\label{eq: Lyapunov exponent}  \lambda_1 + \ldots + \lambda_k   = T_k \cdot c_1(\mathcal{O}(1)), \end{equation}
where the Chern form is taken with respect to some metric induced by $h$. If $X$ is compact, this equality is purely cohomological.

Then, we consider the case where $X$ is compact and we recover and complete a result of \cite{DerDuj} ($r=2$ and $d=1$) and \cite{EKMZ} ($d=1$) showing that the Lyapunov spectrum satisfies the following estimates. For any holomorphic subbundle $F$ of $E$ of co-rank $k$, we have that 
\begin{equation} \label{eq:minoration} \sum_{l=1}^k  \lambda_l \geq \pi \cdot \deg(F). \end{equation}

Finally, we interpret the difference between the right and left hand sides of \eqref{eq:minoration}, as the intersection $[T_k] \cap [D_F]$, where $D_F\subset Gr(k,E)$ is the divisor of $k$-planes that intersect $F$ non trivially. At the end, we obtain that equality in formula \eqref{eq:minoration} happens if and only if the limit set of the monodromy representation in $\mathbb P (E)$ does not meet $D_F$. 

In the case where $X$ is non-compact, everything should still hold with some general assumptions of bounded geometry but some technical difficulties are still unsolved. We nevertheless discuss the case of some monodromy representations over the Riemann sphere minus $3$ points, considered in paragraph 6.3 of \cite{EKMZ}; they are related to the hypergeometric equation. Apart from the non-compactness issue, we show that, if the monodromy is thick, then the inequality in \eqref{eq:minoration} is strict. This leaves the case of thin monodromies open: showing that the equality then holds seems to us one of the most challenging problems in the topic.

\tableofcontents

\newpage

\section{The Lyapunov spectrum of a flat bundle}

In this section, we define and study the basic properties of the Lyapunov exponents of a flat bundle over a Riemannian manifold. The dynamics is defined by considering Brownian motion on the manifold and we assume that the reader is familiar with its definition and basic notions, as exposed e.g. in \cite{Hsu}.

The results will be used later when the basis manifold is a compact K\"ahler manifold. Since this does not need extra work, we make the study in a more general setting.

\subsection{Brownian motion in Riemannian geometry}

Let $(X,g)$ be a connected $d$-dimensional complete Riemannian manifold. We denote by $\pi: \tilde{X} \rightarrow X$ its universal cover. 

\begin{assu} \label{bound_geom}
We make the following assumptions of \emph{bounded geometry} on $X$:
 \begin{itemize}
 \item $X$ has finite volume;
 \item the Ricci curvature of $X$ is uniformly bounded from below. 
\end{itemize}
\end{assu}

Let $W(X)$ be the space of continuous paths $\gamma:[0,\infty[ \rightarrow X$, with its structure of filtered measurable space. Given any probability measure $\mu$ on $X$, there exists a unique $\Delta_X/2$-diffusion measure $\mathbb{P}_\mu$ on $W(X)$ with initial distribution $\mu$, where $\Delta_X$ denotes the Laplacian operator on $X$ (see e.g. \cite{Hsu}, p.79). If $\mu$ is a Dirac measure $\delta_x$, we write $\mathbb{P}_x$ instead of $\mathbb{P}_{\delta_x}$. 

We recall Dynkin's formula: 
\begin{equation}\label{Dynkin}
 \mathbb{E}_x [f(\gamma_t)] = f(x) + \frac{1}{2} \mathbb{E}_x \Big[\int_0^t \Delta_X f(\gamma_s) \Big] ds
\end{equation}
for any test function $f \in \mathcal{C}_c^\infty(X)$.
 
Let $p_X(t,x,y)$ be the (minimal) heat kernel on $X$; it is the transition density function of Brownian motion. In other words,
\begin{equation}
 \mathbb{E}_x [f(\gamma_t)] = \int_X p_X(t,x,y) f(y)dy,
\end{equation}
for any $f \in L^1(X,g)$. Here, $dy$ stands for the measure on $X$ induced by the metric $g$.

\begin{rem} \label{stoch_compl}
 By Theorem $4.2.4$ in \cite{Hsu}, a complete Riemannian manifold whose Ricci curvature is uniformly bounded from below is \emph{stochastically complete}, meaning that
\begin{equation}
 \int_X p_X(t,x,y) dy = 1,
\end{equation}
for any $(t,x) \in ]0,+\infty| \times X$. Hence, we can forget about Brownian paths exploding in finite time.
\end{rem}

This discussion can also be performed on $\tilde{X}$; we will use similar notations.

\begin{rem}\label{corr_univ}
 Let $\tilde{x}$ be a point in $\tilde{X}$, living in the fiber of $\pi$ above a point $x$ in $X$. The spaces $W(\tilde{X})$ and $W(X)$ are endowed with probability measures $\mathbb{P}_{\tilde{x}}$ and $\mathbb{P}_x$. The subspaces $W_{\tilde{x}}(\tilde{X})$ and $W_x(X)$ of paths starting at $\tilde{x}$ and $x$ have total mass and correspond bijectively by the map $\pi$; moreover the probability measures $\mathbb{P}_{\tilde{x}}$ and $\mathbb{P}_x$ are equal, under this correspondence.
\end{rem}

Let $\theta_X^t$ denote the \emph{shift} in $W(X)$:
\begin{equation*}
 (\theta_X^t) \gamma(s) := \gamma(s+t).
\end{equation*}

We will always consider the probability measure $\mu$ on $X$, defined by normalizing the volume form:

$$ d\mu(y) = \frac{1}{\vol(X)} dy.$$

The shift is measure-invariant and ergodic for the measure $\mathbb{P}_\mu$.

\subsection{The cocycle of a flat bundle}

Let $(E,D)$ be a complex flat vector bundle of rank $n$ over $X$. Let $h$ be a smooth Hermitian metric on $E$; we denote by $<\cdot,\cdot>$ the associated inner product. There is a unique decomposition
\begin{equation*}
 D = \nabla + \alpha,
\end{equation*}
where $\nabla$ is a metric connection and $\alpha$ is a $1$-form with values in the space $\End_h(E)$, of Hermitian endomorphisms of $E$. We make the following compatibility assumption between the metric and the flat connection:

\begin{assu}\label{bound_higgs}
 The operator norm of $\alpha$, relatively to $g$ and $h$, is uniformly bounded on $X$.
\end{assu}

We define
\begin{equation*}
H^t(\gamma) := \log ||P_{\gamma,0,t}||,
\end{equation*}
where $\gamma$ is in $W(X)$ and $||P_{\gamma,0,t}||$ is the operator norm of the parallel transport along $\gamma_{|[0,t]}$. If $\gamma$ is a path in $W(X)$, the operator norms satisfy 
$$
||P_{\gamma,0,t+t'}|| \leq ||P_{\gamma,0,t}|| \, ||P_{\gamma,t,t+t'}||,
$$
for every $t,t' \geq 0$. Hence, $(H^t)_{t \in ]0,+\infty|}$ is a \emph{subadditive cocycle} in $(W(X),\theta^t)$, meaning that
\begin{equation}
 H^{t+t'}(\gamma) \leq H^t(\gamma) + H^{t'}(\theta^t \gamma).
\end{equation}

In order to define the Lyapunov exponent of $(E,D,h)$, we need the following technical result:

\begin{thm} \label{H1_L1}
 For any $t_0 > 0$, the function $\sup_{t \in [0,t_0]} |H^t|$ is in $L^1(W(X),\mathbb{P}_\mu)$.
\end{thm}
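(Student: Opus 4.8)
The plan is to exploit that the metric connection $\nabla$ has isometric parallel transport, so that the growth of $\|P_{\gamma,0,t}\|$ comes entirely from the bounded perturbation $\alpha$. Denote by $P^\nabla_{\gamma,0,t}$ the $\nabla$-parallel transport, an isometry of fibers, and factor $P_{\gamma,0,t} = P^\nabla_{\gamma,0,t}\circ U_t$ with $U_t \in \End(E_{\gamma_0})$. Since $P^\nabla$ is an isometry, $\|P_{\gamma,0,t}\| = \|U_t\|$, hence $H^t = \log\|U_t\|$. Writing $\sigma_{\min},\sigma_{\max}$ for the extreme singular values of $U_t$ and using $\sigma_{\min}(U_t)^{-1} = \|U_t^{-1}\|$, one gets the elementary bound $|H^t| = |\log\|U_t\|| \le \log^+\|U_t\| + \log^+\|U_t^{-1}\| \le \|U_t\| + \|U_t^{-1}\|$. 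It therefore suffices to prove that $\mathbb{E}_\mu[\sup_{t\le t_0}\|U_t\|]$ and $\mathbb{E}_\mu[\sup_{t\le t_0}\|U_t^{-1}\|]$ are finite.

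The key point, and the source of the difficulty, is that Brownian paths are not rectifiable, so the naive estimate $H^t \le \sup_X\|\alpha\|\cdot\mathrm{length}(\gamma_{|[0,t]})$ is useless. Instead I would realize $U_t$ as the solution of a linear stochastic differential equation. Lifting the Brownian motion to the orthonormal frame bundle of $(E,\nabla)$ (stochastic parallel transport, as in \cite{Hsu}) and writing the flat equation in a stochastically $\nabla$-parallel orthonormal frame as $\nabla_{\circ d\gamma}v = -\alpha(\circ d\gamma)v$, one finds that $U_t$ obeys the Stratonovich equation $dU_t = -A(\circ d\gamma)\,U_t$, $U_0 = \mathrm{Id}$, where $A$ is the $\End_h$-valued matrix of $\alpha$ in the frame. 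By Assumption \ref{bound_higgs} the coefficient $A$ is uniformly bounded by $C := \sup_X\|\alpha\|$, and by Remark \ref{stoch_compl} the driving Brownian motion does not explode, so $U_t$ is defined for all $t$.

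Converting to Itô form introduces a drift quadratic in $A$, and a direct computation with the Hilbert--Schmidt norm $\|U_t\|_{HS}^2 = \Tr(U_t^\ast U_t)$ gives $d\|U_t\|_{HS}^2 = dM_t + b_t\,dt$, where $M_t$ is a local martingale and, because $A$ is Hermitian and bounded, the drift satisfies $0\le b_t \le 2dC^2\|U_t\|_{HS}^2$ (here $d=\dim X$ is the number of independent noise directions). Taking expectations and applying Gronwall's lemma yields $\mathbb{E}_\mu[\|U_t\|_{HS}^2] \le n\,e^{2dC^2 t}$. Passing from this fixed-time moment to the supremum over $[0,t_0]$ is then standard: one applies the Burkholder--Davis--Gundy inequality to control $\mathbb{E}_\mu[\sup_{t\le t_0}|M_t|]$ and combines it with the Gronwall bound, obtaining $\mathbb{E}_\mu[\sup_{t\le t_0}\|U_t\|]<\infty$ since $\|U_t\|\le\|U_t\|_{HS}$.

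For the inverse I would observe that, up to the isometry $P^\nabla$, $U_t^{-1}$ is the relative parallel transport of the dual flat bundle $(E^\ast,D^\ast)$, whose metric decomposition is $D^\ast = \nabla^\ast - \alpha^\ast$ with $\alpha^\ast$ again uniformly bounded; the identical argument then gives $\mathbb{E}_\mu[\sup_{t\le t_0}\|U_t^{-1}\|]<\infty$. Combining the two bounds with the inequality above, and integrating against $\mathbb{P}_\mu$ (a probability measure since $\vol(X)<\infty$ by Assumption \ref{bound_geom}), we conclude that $\sup_{t\in[0,t_0]}|H^t|\in L^1(W(X),\mathbb{P}_\mu)$. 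The main obstacle is precisely the non-rectifiability of Brownian trajectories, which forces the passage through stochastic calculus; once $U_t$ is set up as a linear SDE with bounded coefficients, the moment and maximal estimates are routine.
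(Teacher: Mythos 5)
Your reduction to a linear SDE has a genuine gap at the Stratonovich-to-It\^o step. Writing the equation in a stochastically $\nabla$-parallel frame, $dU_t = -A_i(u_t)\,U_t \circ dB^i_t$, the It\^o drift is not merely quadratic in $A$: converting to It\^o form requires differentiating the coefficient along the driving noise, giving
\begin{equation*}
dU_t = -A_i U_t\, dB^i_t + \tfrac12 \sum_i \bigl( A_i^2 - (\nabla_{H_i} A_i) \bigr) U_t \, dt,
\end{equation*}
where $\nabla_{H_i} A_i$ is a horizontal (first covariant) derivative of $\alpha$. Assumption \ref{bound_higgs} bounds only the operator norm of $\alpha$, not of $\nabla \alpha$; since Assumption \ref{bound_geom} allows $X$ to be a noncompact finite-volume manifold, $\nabla\alpha$ may well be unbounded there, so your claimed drift bound $0 \leq b_t \leq 2dC^2 \|U_t\|_{HS}^2$, and with it the Gronwall and Burkholder--Davis--Gundy estimates, are unjustified. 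The derivative term does not cancel if you work directly with $d\,\Tr(U_t^\ast U_t)$ either: the generator of this process genuinely involves $\nabla\alpha$. This is the same phenomenon the paper confronts in Lemma \ref{phi_is_bounded}, where bounding the leafwise Laplacian of $\log h(s)$ requires the \emph{extra} admissibility hypothesis (bounded Chern curvature), strictly stronger than Assumption \ref{bound_higgs}; Theorem \ref{H1_L1}, by contrast, is asserted under Assumptions \ref{bound_geom} and \ref{bound_higgs} alone. (A secondary point: identifying the pathwise transport $P_{\gamma,0,t}$, which is defined via the local system for merely continuous paths, with your SDE solution requires a Wong--Zakai approximation argument that you should at least invoke.)

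The idea you are missing -- and the reason the paper never needs stochastic calculus on the bundle -- is that flatness of $D$ is used in an essential way, not just through the decomposition $D = \nabla + \alpha$. Because the connection is flat, parallel transport along $\gamma_{|[0,t]}$ depends only on the homotopy class of the path, hence, after lifting to the universal cover, only on the endpoints $\tilde{x}$ and $\tilde{\gamma}(t)$. One may therefore evaluate $\|P_{\tilde{x},\tilde{\gamma}(t)}\|$ along the \emph{geodesic} joining them; the elementary differential inequality $|f'(t)| \leq C |\dot{\omega}_t|$ for $f(t) = \log h(u(t))$, which uses only the boundedness of $\alpha$, integrates to the pathwise Lipschitz bound $|H^t(\gamma)| \leq C\, d_{\tilde{X}}(\tilde{x},\tilde{\gamma}(t))$. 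This completely sidesteps the non-rectifiability of Brownian paths, which your proposal correctly identifies as the obstacle but then attacks by the wrong route. The proof is finished by the Gaussian exit estimate of \cite{Stroock} (equation $(8.65)$) under the Ricci lower bound, which makes $\sup_{t \in [0,t_0]} d(\tilde{x},\gamma(t))$ integrable uniformly in $\tilde{x}$. In the compact case your SDE computation could be repaired, since all derivatives of $\alpha$ are then automatically bounded, but it does not prove the theorem in the stated generality.
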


\begin{proof}
 We claim that there exists a positive constant $C$ such that, for any $x$ and $y$ in $\tilde{X}$,
\begin{equation} \label{par_trans}
 \Big|\log ||P_{x,y}||\,\Big| \leq C.d_{\tilde{X}}(x,y),
\end{equation}
where $||P_{x,y}||$ is the operator norm of the parallel transport from $x$ to $y$ on $\pi^{\ast} E$.

Indeed, let $\omega:[0,T] \rightarrow \tilde{X}$ be a geodesic from $x$ to $y$. If $u$ is in the fiber $E_x$,
we write $u(t)$ for its parallel transport along $\omega$. We choose such a $u$ satisfying $||P_{x,y}|| = \frac{h(u(T))}{h(u(0))}$ 
and we consider the function $f(t) = \log h(u(t))$. Since $u(t)$ is flat,
\begin{eqnarray*}
 \frac d{dt} h(u(t))^2 &=& 2 <\nabla_{\dot{\omega}_t} u(t), u(t)> \\
&=& -2<\alpha(\dot{\omega}_t) u(t),u(t)>.
\end{eqnarray*}
Hence,
\begin{equation*}
 f'(t) = -\frac{<\alpha(\dot{\omega}_t) u(t),u(t)>}{h(u(t))^2}.
\end{equation*}
By Assumption \ref{bound_higgs}, this implies $|f'(t)| \leq C |\dot{\omega}_t|$, for some positive constant $C$. Integrating from $0$ to $T$, 
we get $\big|\log \frac{h(u(T))}{h(u(0))}\big| \leq C \int_0^T |\dot{\omega}_t| dt = C \dot d_{\tilde{X}}(x,y)$, proving the claim.

We define functions $H_{\tilde{X}}^t$ on $W(\tilde{X})$ in the same way than the functions $H^t$ on $W(X)$. Then,
\begin{eqnarray}
\mathbb{E}_{\mu}(\sup_{t \in [0,t_0]} |H^t|) &=& \int_X \mathbb{E}_x(\sup_{t \in [0,t_0]} |H^t|) d\mu(x) \nonumber \\
&=& \int_X \mathbb{E}_{\tilde{x}}(\sup_{t \in [0,t_0]} |H_{\tilde{X}}^t|) d\mu(x) \nonumber,
\end{eqnarray}
where $\tilde{x}$ is an arbitrary point in $\tilde{X}$ over $x$. We have just proved that 
$$ |H_{\tilde{X}}^t(\gamma)| \leq C.d(\tilde{x},\gamma(t)),$$
if $\gamma$ is a path in $W(\tilde{X})$ starting at $\tilde{x}$. It is thus sufficient to show that 
$$ \mathbb{E}_{\tilde{x}}\Big(\sup_{t \in [0,t_0]} d(\tilde{x},\gamma(t))\Big)$$
is bounded, uniformly in $\tilde{x}$.

Here, we quote equation $(8.65)$ in \cite{Stroock}. It says that for every $t_0$, there is a constant $C(t_0)$ depending only on
the dimension of $X$ and the bound on the Ricci curvature such that, for every $\tilde{x}$ in $\tilde{X}$ and radius $r > 0$
$$ \mathbb{P}_{\tilde{x}} (\sup_{t \in [0,t_0]} d(\tilde{x},\gamma(t)) \geq r) \leq \sqrt{2} \exp\big(-\frac{r^2}{4t_0} + C(t_0)\big).$$
Since the left hand side is integrable as a function of $r$, this concludes the proof.
\end{proof}

\subsection{Lyapunov exponents}

Since $(H^t)_{t \in ]0,+\infty[}$ is a subadditive cocycle on $(W(X),\theta^t)$ satisfying $\sup_{[t \in [0,1]} |H^t| \in L^1(W(X), \mathbb{P}_\mu)$, Kingman subadditive theorem implies:
\begin{prop}
 $(H_t/t)_t$ converges $\mathbb{P}_\mu$-almost everywhere to some constant $\lambda$, when $t$ goes to $+\infty$.
\end{prop}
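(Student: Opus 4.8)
The plan is to read the statement as a direct instance of the continuous-parameter version of Kingman's subadditive ergodic theorem, so that the work reduces to verifying its hypotheses for the process $(H^t)_{t>0}$ over the flow $(W(X),\theta^t,\mathbb{P}_\mu)$. Three ingredients are needed: subadditivity of the cocycle, already recorded in the inequality $H^{t+t'}\le H^t+H^{t'}\circ\theta^t$; an integrability bound, namely $\sup_{t\in[0,1]}|H^t|\in L^1(W(X),\mathbb{P}_\mu)$, which is exactly the content of Theorem \ref{H1_L1} with $t_0=1$; and the fact, stated above, that the shift $\theta^t$ is measure-preserving and ergodic for $\mathbb{P}_\mu$. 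Granting these, Kingman's theorem yields the $\mathbb{P}_\mu$-almost everywhere convergence of $H^t/t$ to an invariant limit function $f$, and ergodicity forces $f$ to be almost everywhere equal to the constant $\lambda=\lim_{t\to\infty}\frac{1}{t}\,\mathbb{E}_\mu[H^t]=\inf_{t>0}\frac{1}{t}\,\mathbb{E}_\mu[H^t]$.

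If one prefers a self-contained argument rather than quoting the continuous-time statement, I would first apply the classical discrete-time subadditive theorem to the sequence $(H^n)_{n\in\N}$ relative to the time-one map $\theta^1$: subadditivity is inherited from the cocycle relation and $H^1\in L^1$ follows from Theorem \ref{H1_L1}, so $H^n/n$ converges almost everywhere and in $L^1$ to a $\theta^1$-invariant function $f$. The second and genuinely technical step is the interpolation between integer and real times. Writing $t=n+s$ with $n=\lfloor t\rfloor$ and $s\in[0,1)$, subadditivity gives the upper bound $H^t-H^n\le \sup_{u\in[0,1]}|H^u|\circ\theta^n$, while the multiplicativity $P_{\gamma,0,t}=P_{\gamma,n,t}\,P_{\gamma,0,n}$ together with the invertibility of parallel transport gives the matching lower bound $H^n-H^t\le \log\|(P_{\gamma,n,t})^{-1}\|$, whose right-hand side is controlled over the unit interval $[n,n+1]$ by the geodesic estimate of Theorem \ref{H1_L1} applied to $P^{-1}=P_{\gamma,t,n}$. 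Setting $\psi_n$ equal to the maximum of these two integrable quantities, one has $|H^t-H^n|\le\psi_n$, and since $\sup_{u\in[0,1]}|H^u|\in L^1$ and $\theta^1$ preserves $\mathbb{P}_\mu$, Birkhoff's ergodic theorem (or a Borel--Cantelli argument) gives $\psi_n/n\to 0$ almost everywhere, so that $H^t/t$ and $H^n/n$ share the same limit.

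The final point is to upgrade $\theta^1$-invariance of $f$ to genuine constancy. The same oscillation estimate shows that $f$ is in fact invariant under the whole flow $(\theta^r)_{r\ge 0}$, because $|H^n\circ\theta^r-H^n|$ is $O(1)$ uniformly in $n$; ergodicity of the flow then forces $f$ to be $\mathbb{P}_\mu$-almost surely constant, equal to $\lambda$. I expect the interpolation and oscillation control to be the only real obstacle: one must bound the fluctuation of $H^t$ over a unit time window by an integrable function that does not depend on the base time, and the sole tools for this are the uniform bound $\sup_{t\in[0,1]}|H^t|\in L^1$ from Theorem \ref{H1_L1} and the symmetric estimate for the inverse parallel transport. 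Everything else is a formal verification of the hypotheses of Kingman's theorem.
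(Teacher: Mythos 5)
Your first paragraph is precisely the paper's proof: the authors verify subadditivity, invoke Theorem \ref{H1_L1} for the integrability hypothesis $\sup_{t\in[0,1]}|H^t|\in L^1(W(X),\mathbb{P}_\mu)$, apply the continuous-parameter Kingman subadditive ergodic theorem, and deduce constancy of the limit from ergodicity of the shift $(\theta^t)_t$. The self-contained discretization and interpolation argument in your remaining paragraphs is correct but is just the standard proof of the continuous-time Kingman theorem, which the paper simply quotes.
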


The limit is a constant by ergodicity of the shift $(\theta^t)_t$. 

\begin{defn}
 We call $\lambda$ the (first) \emph{Lyapunov exponent} of $(E,D,h)$.
\end{defn}

The rest of this section is devoted to the proof of some basic and well-known properties of Lyapunov exponents.

\begin{prop}\label{dep_metric}
 Let $h$ and $h'$ be mutually bounded Hermitian metrics on $E$: i.e. there exists a constant $C \geq 1$ such that
\begin{equation*}
 C^{-1} \leq \frac{h_x(u)}{h'_x(u)} \leq C
\end{equation*}
holds, for any $x$ in $X$ and non-zero $u$ in $E_x$. The cocycle $(H_t)$, computed with $h$, satisfies the integrability condition of Theorem \ref{H1_L1} if and only if the same is true for the cocycle computed with $h'$. Moreover, the Lyapunov exponents $\lambda(h)$ and $\lambda(h')$ are then equal.
\end{prop}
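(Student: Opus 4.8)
The plan is to show that the two cocycles $H^t$ computed with $h$ and with $h'$ differ by a \emph{uniformly bounded} function on $W(X)$, independent of the path $\gamma$ and of the time $t$; both assertions then follow immediately.

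First I would compare the operator norms fiberwise. The crucial observation is that the parallel transport $P_{\gamma,0,t}$ is determined by the flat connection $D$ alone and does not depend on the choice of Hermitian metric; what changes with the metric is only the operator norm by which one measures this fixed linear map. For a linear map $P$ between two fibers, using the hypothesis $C^{-1} \leq h(u)/h'(u) \leq C$ at both the source and the target, a direct estimate gives $h(Pu) \leq C\,h'(Pu)$ and $h(u) \geq C^{-1} h'(u)$, whence $\|P\|_h \leq C^2 \|P\|_{h'}$, and symmetrically $\|P\|_{h'} \leq C^2 \|P\|_h$. Taking logarithms and specializing to $P = P_{\gamma,0,t}$, this yields
\begin{equation*}
\big| H^t_h(\gamma) - H^t_{h'}(\gamma) \big| \leq 2 \log C
\end{equation*}
for every $\gamma \in W(X)$ and every $t > 0$.

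With this bound in hand, both conclusions are routine. For the integrability statement, the constant $2\log C$ is trivially integrable on the probability space $(W(X),\mathbb{P}_\mu)$, so the pointwise inequality $\sup_{t \in [0,t_0]} |H^t_h| \leq \sup_{t \in [0,t_0]} |H^t_{h'}| + 2\log C$ shows that the condition of Theorem \ref{H1_L1} holds for one metric if and only if it holds for the other. For the equality of the Lyapunov exponents, dividing the displayed bound by $t$ gives $|H^t_h/t - H^t_{h'}/t| \leq 2\log C / t$, which tends to $0$ as $t \to +\infty$; hence the almost-sure limits provided by Kingman's theorem coincide and $\lambda(h) = \lambda(h')$.

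I expect essentially no serious obstacle here: the only point requiring care is the fiberwise operator-norm comparison, and even that is a one-line consequence of mutual boundedness precisely because the underlying linear map $P_{\gamma,0,t}$ is metric-independent. In particular, no properties of the Brownian motion beyond the almost-sure convergence already furnished by the previous proposition are needed.
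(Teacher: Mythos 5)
Your proof is correct and takes essentially the same approach as the paper: the identical key estimate $\|P_{\gamma,0,t}\|_h \leq C^2 \|P_{\gamma,0,t}\|_{h'}$ (and symmetrically), giving the uniform bound $|H^t_h - H^t_{h'}| \leq 2\log C$, from which both the equivalence of the integrability conditions and the equality of exponents follow. The only cosmetic difference is that the paper deduces $\lambda(h) = \lambda(h')$ by bounding the difference of expectations via the heat kernel and letting $t \to \infty$, whereas you divide the pointwise bound by $t$ and compare the almost-sure Kingman limits directly; both are immediate consequences of the same estimate.
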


\begin{proof}
 If $\tilde{x},\tilde{y}$ are in $\tilde{X}$, the operator norms of $P_{\tilde{x},\tilde{y}}$ with respect to $h$ and $h'$ satisfy
$$ C^{-2} \leq \frac{||P_{\tilde{x},\tilde{y}}||_{h'}}{||P_{\tilde{x},\tilde{y}}||_{h'}} \leq C^2.$$ 
The first point of the proposition easily follows from the proof of Theorem \ref{H1_L1}. Moreover, the Lyapunov exponents satisfy
\begin{eqnarray*}
|\lambda(h)-\lambda(h')| &\leq& \lim_{t \rightarrow + \infty} \frac 1t \int_X \Big( \int_{\tilde{X}} 2 \log C \, p^{\tilde{X}}(t,\tilde{x},\tilde{y}) d\tilde{y} \Big) d\mu(x) \\
&=& \lim_{t \rightarrow + \infty}\frac{2}t \log C \\
&=& 0,
\end{eqnarray*}
which concludes the proof.
\end{proof}

\begin{rem}
 This statement is weak but is sufficient in the case where $X$ is compact. For a more general result, see e.g. Theorem A.5 in \cite{EKMZ}.
\end{rem}

\begin{lem}
The Lyapunov exponent is nonnegative and vanishes if the rank of $E$ is $1$.
\end{lem}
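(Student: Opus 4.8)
The plan is to derive both assertions from the time-reversibility of the Brownian motion, combined with the elementary inequality $\|P\|\,\|P^{-1}\| \ge 1$ valid for any invertible operator. First I would record the operator-norm fact: if $P$ is an invertible linear map between Hermitian spaces, submultiplicativity of the operator norm gives $1 = \|\mathrm{id}\| = \|P\,P^{-1}\| \le \|P\|\,\|P^{-1}\|$, so that
$$ \log\|P\| + \log\|P^{-1}\| \ge 0 . $$
Moreover, when $\rk(E)=1$ the transport $P$ is multiplication by a scalar, so $\|P^{-1}\| = \|P\|^{-1}$ and this becomes an equality.

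Next I would relate a path to its time reversal. Fixing $t>0$ and, for a path $\gamma$, setting $\check\gamma(s) := \gamma(t-s)$ on $[0,t]$, parallel transport along $\check\gamma$ undoes parallel transport along $\gamma$, i.e. $P_{\check\gamma,0,t} = (P_{\gamma,0,t})^{-1}$. Hence $H^t(\check\gamma) = \log\|(P_{\gamma,0,t})^{-1}\|$, and the operator inequality above yields
$$ H^t(\gamma) + H^t(\check\gamma) \ge 0 , $$
with equality in the rank one case. Note that $H^t$ depends only on the restriction $\gamma|_{[0,t]}$, so applying it to $\check\gamma$ makes sense.

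The crucial input is that $\mathbb{P}_\mu$ is invariant under reversal on $[0,t]$: since $\mu$ is the normalized volume, which is the stationary measure, and the heat kernel is symmetric, $p_X(t,x,y)=p_X(t,y,x)$, the law of $(\gamma(s))_{s\in[0,t]}$ under $\mathbb{P}_\mu$ coincides with that of $(\gamma(t-s))_{s\in[0,t]}$. This gives $\mathbb{E}_\mu[H^t(\gamma)] = \mathbb{E}_\mu[H^t(\check\gamma)]$. Integrating the displayed inequality then produces $2\,\mathbb{E}_\mu[H^t] \ge 0$, hence $\mathbb{E}_\mu[H^t]\ge 0$ for every $t$. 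Since Kingman's theorem identifies $\lambda$ with $\lim_{t\to\infty}\mathbb{E}_\mu[H^t]/t$, we obtain $\lambda\ge 0$. In the rank one case the inequality is an equality, so $\mathbb{E}_\mu[H^t]=0$ for all $t$, and therefore $\lambda=0$.

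I expect the delicate point to be the rigorous justification of the reversal invariance of $\mathbb{P}_\mu$: the measurability of the time-reversal map on path space and the passage from symmetry of the heat kernel and stationarity of $\mu$ to the equality of the finite-dimensional distributions of the forward and reversed processes. Once this standard reversibility is granted, the remaining steps are purely algebraic and require no further analytic estimates.
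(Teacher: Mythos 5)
Your proof is correct and follows essentially the same route as the paper: both arguments rest on the elementary inequality $1 \leq \|P\|\,\|P^{-1}\|$ (an equality in rank one) combined with the symmetry of the heat kernel for the stationary measure $\mu$. The only cosmetic difference is that you phrase the symmetry as pathwise time-reversal invariance of $\mathbb{P}_\mu$ on $[0,t]$, whereas the paper reduces $\mathbb{E}_\mu(H^t)$ to a two-point integral over $(\tilde{X}\times\tilde{X})/\pi_1(X)$ and symmetrizes the kernel there -- your ``delicate point'' of justifying reversibility unwinds, via the finite-dimensional distributions, to exactly the heat-kernel symmetry the paper invokes.
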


\begin{proof}
It is convenient to consider the space $X^{\pi_1(X)} := (\tilde{X} \times \tilde{X})/{\pi_1(X)}$, where the action of $\pi_1(X)$ is the diagonal one. This space carries a natural metric, since the metric on $\tilde{X} \times \tilde{X}$ is invariant by the action and
\begin{eqnarray*}
 \mathbb{E}_{\mu}(H^t) &=& \int_X \Big( \int_{\tilde{X}} \log ||P_{\tilde{x},\tilde{y}}||\, p^{\tilde{X}}(t,\tilde{x},\tilde{y}) d\tilde{y} \Big) d\mu(x) \\
&=& \frac{1}{\vol(X)} \int_{X^{\pi_1(X)}} \log ||P_{\tilde{x},\tilde{y}}||\, p^{\tilde{X}}(t,\tilde{x},\tilde{y}) d\tilde{x} \otimes d\tilde{y},
\end{eqnarray*}
since $ \log ||P_{\tilde{x},\tilde{y}}||\, p^{\tilde{X}}(1,\tilde{x},\tilde{y})$ is invariant for the diagonal action of $\pi_1(X)$. By symmetry of the heat kernel, we get:
\begin{equation} \label{symm_form}
 2 \,\mathbb{E}_{\mu}(H^t) = \frac{1}{\vol(X)} \int_{X^{\pi_1(X)}} \big(\log ||P_{\tilde{x},\tilde{y}}|| + \log ||P_{\tilde{y},\tilde{x}}|| \big) \, p^{\tilde{X}}(t,\tilde{x},\tilde{y}) d\tilde{x} \otimes d\tilde{y}.
\end{equation}
Since $P_{\tilde{x},\tilde{y}} . P_{\tilde{y},\tilde{x}}$ is the identity, the inequality $1 \leq ||P_{\tilde{x},\tilde{y}}||.||P_{\tilde{y},\tilde{x}}||$ holds. Morever, it is an equality for a rank $1$ vector bundle $E$. Since $\lambda$ is the limit of $\frac {\mathbb{E}_{\mu}(H^t)}t$, we get $\lambda \geq 0$, with equality in the rank $1$ case.
\end{proof}

\paragraph{Higher Lyapunov exponents} For any $r$ from $1$ to $n = \rk(E)$, we consider the vector bundle $\Lambda^r E$. It is endowed with a flat connection $D^{\Lambda^r E}$ and a Hermitian metric $h^{\Lambda^r E}$, induced from the connection and metric on $E$. We define recursively the higher Lyapunov exponents in the following way:

\begin{defn}\label{def_higher}
We denote by $\lambda(\Lambda^r E)$ the first Lyapunov exponent of $\Lambda^r(E)$. The \emph{Lyapunov exponents} $(\lambda_i)_{i=1}^n$ of $E$ are defined by
\begin{equation*}
 \lambda_1 + \dots + \lambda_r = \lambda(\Lambda^r(E)).
\end{equation*}
The $n$-tuple $(\lambda_1,\dots,\lambda_n)$ is the \emph{Lyapunov spectrum} of $E$.
\end{defn}

\begin{rem}
 The Lyapunov exponent of $\Lambda^r E$ is well defined since Assumption \ref{bound_higgs} on $\Lambda^r E$ is satisfied if it is satisfied on $E$.
\end{rem}

\begin{prop}\label{symm_lyapunov}
 The Lyapunov spectrum of $E$ is symmetric, that is $\lambda_r = - \lambda_{n-r}$, for every $r = 1,\dots,n$.
\end{prop}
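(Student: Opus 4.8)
The plan is to reduce the symmetry of the spectrum to two invariance properties of the first Lyapunov exponent: invariance under passage to the dual bundle, and invariance under tensoring by a flat line bundle. Recall that by Definition \ref{def_higher} the partial sums $S_r := \lambda_1 + \dots + \lambda_r$ equal $\lambda(\Lambda^r E)$, with the conventions $S_0 = 0$ and $S_n = \lambda(\Lambda^n E) = 0$, the latter because $\Lambda^n E = \det E$ has rank $1$ and the first Lyapunov exponent vanishes in rank $1$. It therefore suffices to prove $S_r = S_{n-r}$ for every $r$; the asserted relation then drops out of the elementary computation $\lambda_{n+1-k} = S_{n+1-k} - S_{n-k} = S_{k-1} - S_k = -\lambda_k$, i.e. $\lambda_k + \lambda_{n+1-k} = 0$.

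First I would establish that $\lambda(W) = \lambda(W^*)$ for any flat bundle $W$ satisfying the standing assumptions, where $W^*$ is equipped with the contragredient flat connection and the induced Hermitian metric. The point is that parallel transport on $W^*$ is the inverse adjoint of that on $W$, and since singular values are preserved under adjunction the operator norms satisfy $\|P^{W^*}_{\tilde{x},\tilde{y}}\| = \|(P^W_{\tilde{x},\tilde{y}})^{-1}\| = \|P^W_{\tilde{y},\tilde{x}}\|$. Feeding this into the integral expression for $\mathbb{E}_\mu(H^t)$ over $X^{\pi_1(X)}$ used in the proof of the preceding lemma, and invoking the symmetry $p^{\tilde{X}}(t,\tilde{x},\tilde{y}) = p^{\tilde{X}}(t,\tilde{y},\tilde{x})$ of the heat kernel (i.e. the time-reversibility of Brownian motion), the relabelling $\tilde{x} \leftrightarrow \tilde{y}$ identifies $\mathbb{E}_\mu(H^t_{W^*})$ with $\mathbb{E}_\mu(H^t_{W})$ for every $t$; dividing by $t$ and letting $t \to \infty$ gives $\lambda(W^*) = \lambda(W)$. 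This is the conceptual heart of the statement and the step I expect to require the most care, since it is precisely where the symmetry of the Brownian motion enters.

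Next I would record that $\lambda(V \otimes L) = \lambda(V)$ whenever $L$ is a flat line bundle: the parallel transport on $V \otimes L$ factors as $P^V \otimes P^L$ with $P^L$ a scalar, so that $\log\|P^{V\otimes L}\| = \log\|P^V\| + \log|P^L|$, and the contribution of the second term to $\tfrac{1}{t}\mathbb{E}_\mu(H^t)$ tends to $\lambda(L) = 0$ by the rank-$1$ case. Combining this with the canonical isomorphism of flat bundles $\Lambda^{n-r} E \cong (\Lambda^r E)^* \otimes \det E$ coming from the wedge pairing — under which the metric induced by $h$ on the left and the product of the induced metrics on the right are mutually bounded, so that Proposition \ref{dep_metric} applies — yields
\begin{equation*} S_{n-r} = \lambda(\Lambda^{n-r} E) = \lambda\big((\Lambda^r E)^* \otimes \det E\big) = \lambda\big((\Lambda^r E)^*\big) = \lambda(\Lambda^r E) = S_r, \end{equation*}
where the last two equalities use the tensor and duality invariances just established.

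Finally, from $S_r = S_{n-r}$ for all $0 \leq r \leq n$ together with $S_0 = S_n = 0$, the difference computation of the first paragraph gives $\lambda_k + \lambda_{n+1-k} = 0$ for every $k$, which is the symmetry of the spectrum about $0$. The only genuine bookkeeping is to check that all the bundles involved ($\Lambda^r E$, its dual, and its twist by $\det E$) still satisfy Assumptions \ref{bound_geom} and \ref{bound_higgs}, so that their Lyapunov exponents are defined — which follows as in the remark after Definition \ref{def_higher} — and to verify the mutual boundedness needed to invoke Proposition \ref{dep_metric}; all the analytic content is concentrated in the heat-kernel-symmetry argument of the second paragraph.
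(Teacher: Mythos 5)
Your proof is correct, and it takes a recognizably different route from the paper's, though both rest on the same analytic core. The paper proves the symmetry in one stroke: arguing as for equation \eqref{symm_form}, it writes the sum of the paired exponents as the limit of a single symmetrized integral over the diagonal quotient $X^{\pi_1(X)}$, and then kills the integrand pointwise by a singular-value identity comparing $\|\Lambda^i M\|$ and $\|\Lambda^i M^{-1}\|$ for an automorphism $M$ (reduced to the diagonal case by the $KAK$ decomposition). You instead factor the statement through two invariances of the first exponent, $\lambda(W^\ast)=\lambda(W)$ and $\lambda(V\otimes L)=\lambda(V)$ for $L$ flat of rank one, and close the loop with the canonical flat isomorphism $\Lambda^{n-r}E \cong (\Lambda^r E)^\ast \otimes \det E$. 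The heat-kernel symmetry enters your argument exactly once, in the duality step, where your computation coincides with the paper's symmetrization; so the difference is in the algebraic packaging. Your version needs only the soft fact that adjoints preserve operator norms, rather than the singular-value bookkeeping over all exterior powers, and it produces reusable statements; the price is the verification you correctly flag, that the wedge-pairing isomorphism respects the induced metrics --- in fact it sends an induced orthonormal basis to one (up to sign), so it is an isometry and Proposition \ref{dep_metric} applies trivially. One further point in your favour: your conclusion $\lambda_k = -\lambda_{n+1-k}$ is the correctly indexed symmetry. The paper's displayed statement $\lambda_r = -\lambda_{n-r}$, together with the formula $\|\Lambda^i M^{-1}\| = \prod_{j=1}^i a_{n-j}^{-1}$ in its linear algebra lemma (which should read $\prod_{j=1}^i a_{n-j+1}^{-1}$), carries a consistent off-by-one: taken literally, for $n=2$ it would force $\lambda_1 = 0$. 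Your bookkeeping fixes this.
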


\begin{proof}
 Let $r$ be an integer between $1$ and $n$. If $u$ and $v$ are points in $\tilde{X}$, we write $\Lambda^i P_{u,v}$ for the parallel transport on the vector bundle $\Lambda^i E$, from $u$ to $v$. A similar computation to the one giving equation \eqref{symm_form} shows that:
\begin{eqnarray*}
 \lambda_r + \lambda_{n-r} &=& \lim_{t \rightarrow +\infty} \frac 1t \int_{X^{\Gamma}} \Big(\log \frac{||\Lambda^r P_{\tilde{x},\tilde{y}}||}{||\Lambda^{r-1} P_{\tilde{x},\tilde{y}}||} \\
&+& \log \frac{||\Lambda^{n-r} P_{\tilde{y},\tilde{x}}||}{||\Lambda^{n-r-1} P_{\tilde{y},\tilde{x}}||} \Big)\, p^{\tilde{X}}(t,\tilde{x},\tilde{y}) d\tilde{x} \otimes d\tilde{y}.
\end{eqnarray*}
The statement then follows from the following linear algebra lemma.
\end{proof}

\begin{lem}
 Let $M$ be an automorphism of a Hermitian vector space $(E,h)$ of dimension $n$. We endow $\Lambda^i E$ with the induced metric. One has the following equality of operator norms, for any integer $r$ between $1$ and $n$:
\begin{equation}\label{eqn_lambda_r}
 \log \frac{||\Lambda^r M||}{||\Lambda^{r-1} M||} = - (\log \frac{||\Lambda^{n-r} M^{-1}||}{||\Lambda^{n-r-1} M^{-1}||}).
\end{equation}
\end{lem}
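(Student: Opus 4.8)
The plan is to reduce both sides to the singular values of $M$ and to exploit the fact that, for the induced metrics, the operator norm of an exterior power is a product of singular values.

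First I would fix a \emph{singular value decomposition} $M = U \Sigma V^{\ast}$, where $U$ and $V$ are unitary for $h$ and $\Sigma = \mathrm{diag}(\sigma_1, \dots, \sigma_n)$ with $\sigma_1 \geq \dots \geq \sigma_n > 0$ the singular values of $M$. Since $\Lambda^k$ is functorial and multiplicative, $\Lambda^k M = \Lambda^k U \cdot \Lambda^k \Sigma \cdot \Lambda^k V^{\ast}$, and because $\Lambda^k$ sends an $h$-unitary map to an $h^{\Lambda^k E}$-unitary map, one gets $||\Lambda^k M|| = ||\Lambda^k \Sigma||$. In the orthonormal basis $(e_{i_1} \wedge \dots \wedge e_{i_k})_{i_1 < \dots < i_k}$ of $\Lambda^k E$ adapted to $\Sigma$, the endomorphism $\Lambda^k \Sigma$ is diagonal with entries $\sigma_{i_1} \cdots \sigma_{i_k}$; hence its operator norm is the largest of these products, namely $\sigma_1 \cdots \sigma_k$. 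This product formula is the one genuine ingredient of the lemma, and I expect it to be the main (if modest) obstacle: everything else is bookkeeping.

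Granting it, the left-hand quotient telescopes to $||\Lambda^r M|| / ||\Lambda^{r-1} M|| = \sigma_r$, so the left-hand side equals $\log \sigma_r$. For the right-hand side I would run the same computation for $M^{-1} = V \Sigma^{-1} U^{\ast}$, whose singular values are the reciprocals $1/\sigma_i$ but listed in reverse order; that is, the $j$-th largest singular value of $M^{-1}$ is $1/\sigma_{n+1-j}$. The analogous telescoping for $M^{-1}$ again collapses a ratio of consecutive exterior-power norms to a single singular value of $M^{-1}$, and the reciprocal–reversal relation identifies it with $1/\sigma_r$. Taking logarithms, the right-hand side equals $-\log \sigma_r$, which matches the left-hand side.

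The only point that really needs care is the index matching between the two telescopings: the shift $k \mapsto k-1$ on the $M$ side must be made to correspond, through the order reversal $j \mapsto n+1-j$, to the shift on the $M^{-1}$ side, so that the single surviving factor is exactly $\sigma_r$ rather than a neighbouring singular value. Ties among the $\sigma_i$ require no special treatment, since the product formula for the operator norm is insensitive to multiplicities.
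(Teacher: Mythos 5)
Your approach is the same as the paper's --- the $KAK$ decomposition invoked there is exactly your singular value decomposition, followed by the same product formula $||\Lambda^k M|| = \sigma_1\cdots\sigma_k$ and the same telescoping --- but the one step you flagged as ``the only point that really needs care'' is precisely where the argument breaks, and you assert its outcome instead of carrying it out. Doing it honestly: $||\Lambda^k M^{-1}|| = \prod_{j=1}^{k}\sigma_{n+1-j}^{-1}$, so the ratio $||\Lambda^{n-r}M^{-1}||\,/\,||\Lambda^{n-r-1}M^{-1}||$ is the $(n-r)$-th largest singular value of $M^{-1}$, namely $\sigma_{n+1-(n-r)}^{-1} = \sigma_{r+1}^{-1}$, \emph{not} $\sigma_r^{-1}$. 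Your computation therefore yields $\log\sigma_r$ on the left and $\log\sigma_{r+1}$ on the right, and no re-bookkeeping of the telescoping can reconcile them: the identity \eqref{eqn_lambda_r} is false as printed. A concrete counterexample: $n=2$, $r=1$, $M=\mathrm{diag}(2,1)$ gives left side $\log 2$ but right side $-\log||M^{-1}|| = 0$. (Another symptom: for $r=n$ the right side involves $\Lambda^{-1}M^{-1}$, which is meaningless.)

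You are in good company, because the paper's own proof commits a compensating slip at the same spot: it asserts $||\Lambda^i M^{-1}|| = \prod_{j=1}^{i} a_{n-j}^{-1}$, which omits $a_n^{-1}$ and is off by one (the correct norm is $\prod_{j=1}^{i} a_{n+1-j}^{-1}$), and only with that wrong norm does the printed identity ``check out''. The correct statement replaces $n-r$ and $n-r-1$ by $n-r+1$ and $n-r$ on the right-hand side, i.e.\ $\log\big(||\Lambda^r M||/||\Lambda^{r-1}M||\big) = -\log\big(||\Lambda^{n-r+1}M^{-1}||/||\Lambda^{n-r}M^{-1}||\big)$, both sides then equalling $\log\sigma_r$; correspondingly, the symmetry of Proposition \ref{symm_lyapunov} should read $\lambda_r = -\lambda_{n+1-r}$ rather than $\lambda_r = -\lambda_{n-r}$ (the printed version with $n=2$, $r=1$ would force $\lambda_1 = -\lambda_1 = 0$ for every rank-two cocycle, which is absurd). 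With the indices corrected, your SVD argument goes through verbatim and coincides with the paper's intended proof; as written, the gap is genuine: the deferred index check fails, and it fails because the statement itself is off by one.
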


\begin{proof}
 By the $KAK$ decomposition, one can assume that $M$ is a diagonal matrix, with positive entries $a_1 \geq \dots \geq a_n$. Then $\Lambda^i M$ is diagonal with entries the products $\prod_I a_I$, where $I$ is a subset of cardinal $i$ in $\{1,\dots,n\}$. The operator norm of $\Lambda^i M$ is $\prod_{j=1}^i a_j$ and the operator norm of $\Lambda^i M^{-1}$ is $\prod_{j=1}^i a_{n-j}^{-1}$. Hence, the left hand side of equation \eqref{eqn_lambda_r} is $\log (a_r)$ and the right hand side is $- \log (a_r^{-1})$. This concludes the proof of the lemma.
\end{proof}

\begin{rem}
 The proof also shows that the Lyapunov spectrum is non-increasing: $\lambda_r \geq \lambda_{r+1}$, for any $r$ between $1$ and $n-1$.
\end{rem}

\section{Harmonic measures on $\mathbb{P}(E)$}

In this section, we use ergodicity of the Brownian motion to compute the Lyapunov exponent of $(E,D)$ as an integral in space. Assumptions \ref{bound_geom} and \ref{bound_higgs} are still satisfied.

\subsection{Existence of harmonic measures}

Let $p: \mathbb{P}(E) \rightarrow X$ be the projectivized bundle of $E$. Since $D$ is a flat connection on $E$, $\mathbb{P}(E)$ carries a $d$-dimensional foliation, where $d$ is the dimension of $X$. On the leaves of the foliation, we consider the Brownian motion with respect to the metric $g$. We define the \emph{heat semigroup} $(\mathcal{P}_t)$ on $\mathbb{P}(E)$ by
$$ (\mathcal{P}_t f)(x) = \int_{\mathcal{L}_x} f(y) p^{\mathcal{L}_x}(t,x,y) dy,$$
where $f$ is any bounded measurable function on $\mathbb{P}(E)$, $\mathcal{L}_x$ is the leaf through the point $x$ and $p^{\mathcal{L}_x}$ is the heat kernel on $\mathcal{L}_x$. A fundamental property of this semigroup is the following:

\begin{prop}\label{Feller}
The heat semigroup has the \emph{Feller property}: if $f$ is a continuous bounded function, then $\mathcal{P}_t f$ is a continuous bounded function.
\end{prop}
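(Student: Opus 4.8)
The plan is to give a probabilistic proof, turning the analytic statement into a statement about continuous dependence of a stochastic flow on its initial condition. The starting observation is that the flat structure identifies leafwise Brownian motion with parallel transport of Brownian motion on the base. Indeed, the projection $p \colon \mathbb{P}(E) \to X$ restricts to a local isometry on each leaf, so if $v = (x,\ell) \in \mathbb{P}(E)$ and $\gamma$ is a Brownian path on $X$ issued from $x$, then $t \mapsto V_t^v := (\gamma_t, P_{\gamma,0,t}\,\ell)$ is a Brownian path on the leaf through $v$ issued from $v$, where $P_{\gamma,0,t}$ denotes the projectivization of parallel transport along $\gamma$. Consequently
\begin{equation*}
 (\mathcal{P}_t f)(v) = \mathbb{E}_{x}\big[ f(V_t^v) \big],
\end{equation*}
and boundedness is immediate, since $\|\mathcal{P}_t f\|_\infty \le \|f\|_\infty$. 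It remains to prove continuity.

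First I would realize the family $(V^v_t)_v$ as the flow of a single stochastic differential equation, so that varying $v$ becomes varying an initial condition while keeping the driving noise fixed. The leafwise tangent distribution is a smooth subbundle of $T\mathbb{P}(E)$ mapped isomorphically onto $TX$ by $dp$; lifting Brownian motion through it, via horizontal Brownian motion on the orthonormal frame bundle of $X$ together with the flat parallel transport in the fibre direction, expresses $V^v$ as the solution of a Stratonovich equation on $\mathbb{P}(E)$ with coefficients that are smooth and, by the bounded geometry of Assumption~\ref{bound_geom} together with Assumption~\ref{bound_higgs}, have uniformly bounded derivatives on compact transversals. Stochastic completeness (Remark~\ref{stoch_compl}) guarantees that these solutions do not explode, so $V^v_t$ is defined for all $t$, and standard results on stochastic flows give that, for almost every Brownian trajectory, the map $v \mapsto V^v_t$ is continuous; here the dependence on the transverse datum $\ell$ is even smoother, parallel transport being linear in $\ell$.

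To conclude, and to accommodate the fact that $X$ need not be compact, so that $f$ is only continuous and not uniformly continuous, I would argue locally. Fix $v_0 = (x_0,\ell_0)$ and $\varepsilon > 0$. Using the Gaussian exit estimate quoted in the proof of Theorem~\ref{H1_L1} (equation $(8.65)$ in \cite{Stroock}), one finds a compact set $K \subset X$ containing a neighbourhood of $x_0$ such that, uniformly for all starting points $x$ in a fixed neighbourhood of $x_0$, the Brownian motion stays inside $K$ on $[0,t]$ with probability at least $1 - \varepsilon/(4\|f\|_\infty)$. On the complementary event the integrand differs from its value at $v_0$ by at most $2\|f\|_\infty$, contributing at most $\varepsilon/2$. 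On the good event, both $V^{v}_s$ and $V^{v_0}_s$ stay in the compact set $p^{-1}(K)$ of $\mathbb{P}(E)$, on which $f$ is uniformly continuous; combining this with the continuity of the flow $v \mapsto V_\cdot^v$ established above and dominated convergence, the contribution of the good event to $|\mathcal{P}_t f(v) - \mathcal{P}_t f(v_0)|$ is less than $\varepsilon/2$ once $v$ is close enough to $v_0$. This proves continuity at $v_0$.

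The main obstacle is the continuous dependence of leafwise Brownian motion on the initial point across different leaves: nearby leaves may be distinct, even topologically, so there is no naive way to compare the heat kernels on them. The stochastic-flow viewpoint is what circumvents this, since the leaves are the integral manifolds of one smooth distribution on the single manifold $\mathbb{P}(E)$ and the comparison becomes continuity of one flow in its initial condition. The delicate point within this is that parallel transport along a Brownian path is a Stratonovich, not a pathwise Riemann, integral, so continuity must be understood in the sense of stochastic flows rather than continuity in the path for the uniform topology; bounded geometry is used precisely to obtain the uniform coefficient bounds that make the flow estimates, and the localization above, uniform over the relevant transversal.
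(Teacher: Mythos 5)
Your proof is correct in substance, but it takes a genuinely different route from the paper's. The paper argues analytically on the universal cover: since $\tilde{X}$ is simply connected, the flat structure trivializes the foliated space $\mathbb{P}(\pi^\ast E)$ as a product $\tilde{X} \times \mathbb{P}(\C^n)$, so the leafwise semigroup upstairs is just the base heat semigroup acting with the fibre coordinate as a parameter; its Feller property is read off from the continuous dependence of solutions of the heat equation on the initial datum (the heat flow acts continuously on uniformly bounded subsets of $C^0(\tilde{X},\R)$ with the topology of local uniform convergence), and the property then descends to $\mathbb{P}(E)$ through the local homeomorphism $\pi_E$ via the intertwining relation between the two semigroups. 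You instead stay downstairs and couple all leafwise Brownian motions through a single Stratonovich SDE (the Eells--Elworthy--Malliavin lift combined with flat transport in the fibre), replacing the PDE regularity input by stability of a stochastic flow, with a localization via the Gaussian exit estimate to handle non-compactness; this buys a quantitative and explicitly uniform argument, at the cost of heavier stochastic machinery. One caution: the step where you deduce, from pointwise non-explosion, that for almost every trajectory the map $v \mapsto V^v_t$ is continuous overstates what is standard on a non-compact manifold --- almost sure continuity of the flow in its initial condition (strong completeness) does \emph{not} follow from stochastic completeness of each individual solution, and is known to fail in general. But your argument does not actually need it: continuity in probability of $v \mapsto V^v_t$ (equivalently, continuity of the maximal partial flow on its almost surely open domain, which is standard for smooth, locally Lipschitz coefficients) suffices for the dominated-convergence step, and your localization on the event that both processes remain in the compact set $p^{-1}(K)$, where the coefficients are uniformly Lipschitz, delivers exactly that. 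Phrased with that correction your proof is complete; note finally that your starting identity $(\mathcal{P}_t f)(v) = \mathbb{E}_x\big[f(\gamma_t, P_{\gamma,0,t}\,\ell)\big]$ is precisely the probabilistic counterpart of the paper's intertwining relation, both resting on the fact that every leaf is a quotient of $\tilde{X}$.
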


\begin{proof}
We denote by $\pi_E : \mathbb{P}(\pi^\ast E) \rightarrow \mathbb{P}(E)$ the projectivized pullback bundle over $\tilde{X}$. Let $(\mathcal{P}^{\tilde{X}}_t)$ be the heat semigroup on $\mathbb{P}(\pi^\ast E)$. Since $\tilde{X}$ is simply connected, the foliation on $\mathbb{P}(\pi^\ast E)$ is a product and the Feller property of $(\mathcal{P}^{\tilde{X}}_t)$ comes from the regularity of the solution of the heat equation, with respect to the initial condition. More precisely, if we equip $C^0 (\tilde{X},\R)$ with the topology of uniform convergence on compact subsets, then the heat flow preserves the closed subset of fonctions uniformly bounded by a certain positive constant, and acts continously on this set. 

Moreover, the semigroups satisfy
$$ \mathcal{P}^{\tilde{X}}_t (\pi_E \circ f) = \pi_E \circ \mathcal{P}_t(f),$$
for any bounded measurable $f$ on $\mathbb{P}(E)$. If $f$ is continuous, then the left hand side is continuous; since $\pi_E$ is a local homeomorphism, $\mathcal{P}_t(f)$ itself is continuous.
\end{proof}

The heat semigroup $(\mathcal{P}_t)$ acts dually on the space $\mathcal{M}_1(\mathbb{P}(E))$ of probability measures by
$$ \mathcal{P}_t(\nu)(f) = \nu(\mathcal{P}_t(f)),$$
for any continuous bounded function $f$.

\begin{defn}
 A probability measure $\nu$ on $\mathbb{P}(E)$ is \emph{harmonic} if $\mathcal{P}_t(\nu) = \nu$, for every $t \geq 0$.
\end{defn}

The following theorem is folklore. In this setting, it was first proved in \cite{Garn}, in the compact case.

\begin{thm} \label{exist_harm}
 There exists a harmonic measure $\nu$ on $\mathbb{P}(E)$.
\end{thm}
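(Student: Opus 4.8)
The plan is to realize $\nu$ as a common fixed point of the commuting family $(\mathcal{P}_t)_{t\ge 0}$ acting on a suitable weak-$\ast$ compact convex set of probability measures. The only real difficulty compared with Garnett's compact case is that $\mathbb{P}(E)$ need not be compact, so that $\mathcal{M}_1(\mathbb{P}(E))$ is not weak-$\ast$ compact; I would circumvent this by exploiting that the fibres of $p$ are compact and that the base carries a canonical stationary measure.

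First I would record that the leafwise Brownian motion projects, under $p$, to the Brownian motion on $(X,g)$: each leaf, equipped with the pullback metric, maps to $X$ by a local isometry, so the leafwise heat kernel pushes forward to $p_X$ and hence $p_\ast \circ \mathcal{P}_t = \mathcal{P}_t^X \circ p_\ast$, where $\mathcal{P}_t^X$ is the heat semigroup on $X$. Next, the normalized volume $\mu = dy/\vol(X)$ is stationary for $\mathcal{P}_t^X$: using the symmetry $p_X(t,x,y)=p_X(t,y,x)$ together with stochastic completeness (Remark \ref{stoch_compl}), one gets for every continuous bounded $f$
\[ \mathcal{P}_t^X\mu (f) = \frac{1}{\vol(X)}\int_X\!\!\int_X p_X(t,x,y) f(y)\, dy\, dx = \frac{1}{\vol(X)}\int_X f(y) \Big(\int_X p_X(t,y,x)\,dx\Big) dy = \mu(f). \]
With this in hand, I would consider the set $\mathcal{K} = \{\nu \in \mathcal{M}_1(\mathbb{P}(E)) : p_\ast \nu = \mu\}$. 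It is nonempty (push $\mu$ forward by a measurable section, or integrate a fibrewise Fubini--Study measure against $\mu$), convex and weak-$\ast$ closed. It is moreover weak-$\ast$ compact: given $\varepsilon>0$, pick a compact $K_0\subset X$ with $\mu(K_0)>1-\varepsilon$; since $p$ is proper with compact projective fibres, $p^{-1}(K_0)$ is compact and every $\nu\in\mathcal{K}$ satisfies $\nu(p^{-1}(K_0)) = \mu(K_0) > 1-\varepsilon$, so $\mathcal{K}$ is uniformly tight and Prokhorov's theorem applies. Finally $\mathcal{K}$ is $\mathcal{P}_t$-invariant, since $p_\ast(\mathcal{P}_t\nu) = \mathcal{P}_t^X(p_\ast\nu) = \mathcal{P}_t^X\mu = \mu$.

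It then remains to produce a common fixed point of $(\mathcal{P}_t)$ inside $\mathcal{K}$. By the Feller property (Proposition \ref{Feller}) each $\mathcal{P}_t$ is weak-$\ast$ continuous on measures, so the semigroup law makes $\{\mathcal{P}_t\}_{t\ge 0}$ a commuting family of continuous affine self-maps of the compact convex set $\mathcal{K}$, and the Markov--Kakutani fixed point theorem yields a $\nu\in\mathcal{K}$ with $\mathcal{P}_t\nu=\nu$ for all $t$. Alternatively, and more concretely, I would start from any $\nu_0\in\mathcal{K}$, form the Cesàro averages $\nu_T = \frac{1}{T}\int_0^T \mathcal{P}_t\nu_0\, dt \in \mathcal{K}$, extract a weak-$\ast$ limit $\nu$ along a sequence $T_n\to\infty$ by compactness of $\mathcal{K}$, and verify harmonicity: for continuous bounded $f$ and fixed $s$, the Feller property gives $\mathcal{P}_s\nu(f) - \nu(f) = \lim_n\big(\mathcal{P}_s\nu_{T_n}(f) - \nu_{T_n}(f)\big)$, while a telescoping estimate bounds $|\mathcal{P}_s\nu_{T_n}(f) - \nu_{T_n}(f)|$ by $2s\|f\|_\infty/T_n \to 0$.

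I expect the only genuine obstacle to be this compactness issue in the non-compact setting; everything else is the standard Krylov--Bogolyubov / Markov--Kakutani machinery. The crucial inputs are therefore the compatibility $p_\ast\circ\mathcal{P}_t = \mathcal{P}_t^X\circ p_\ast$ and the stationarity of $\mu$, which together force the uniform tightness of $\mathcal{K}$, and the Feller property, which supplies the continuity needed to pass to the limit.
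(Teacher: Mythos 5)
Your proof is correct and follows essentially the same route as the paper: tightness of the relevant measures via properness of $p$ and stationarity of $\mu$ on the base, then Ces\`aro averages, Prokhorov, and the Feller property to pass to the limit (Krylov--Bogolioubov). Your Markov--Kakutani packaging of the fixed-point step is only a cosmetic variant, and your ``more concrete'' alternative is precisely the paper's argument.
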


\begin{proof}
 Let $\nu_0$ be a probability measure on $\mathbb{P}(E)$ whose push-forward $p_\ast \nu_0$ on $X$ is the probability measure $\mu$. We claim that the set $(\mathcal{P}_t(\nu_0))_{t \in \R}$ of probability measures is tight. Indeed, let $\epsilon > 0$ and let $K \subset X$ be a compact subspace such that $\mu(K) \geq 1-\epsilon$. Since $p$ is proper, $p^{-1}(K)$ is a compact subspace of $\mathbb{P}(E)$. Moreover, the push-forward of every $\mathcal{P}_t(\nu_0)$ is $\mu$, since $\mu$ is invariant by heat diffusion. This shows that $(\mathcal{P}_t(\nu_0))(p^{-1}(K)) \geq 1-\epsilon$, for every $t$ and concludes the claim.

We define new probability measures $\mu_t$ on $\mathbb{P}(E)$ by
$$ \mu_t := \frac 1t \int_0^t \mathcal{P}_s(\nu_0) ds,$$
for any positive $t$. The set $(\mu_t)_{t > 0}$ is also tight. By Prokhorov theorem, there exists a sequence $t_n \rightarrow +\infty$ such that $\mu_{t_n}$ has a weak limit $\mu_\ast$. Using the Feller property, one shows that $\mu_\ast$ is harmonic; see e.g. the proof of Theorem 6.1 (Krylov-Bogolioubov) in \cite{Hairer}.
\end{proof}

\begin{defn}
 A subset $\mathcal{M}$ of $\mathbb{P}(E)$ is called \emph{invariant} if it is an union of leaves.
\end{defn}

\begin{rem} \label{supp_harm}
 Suppose that $\mathcal{M}$ is a non-empty closed invariant subset of $\mathbb{P}(E)$. Then one can begin with a measure $\nu_0$ with support in $\mathcal{M}$. This shows that there exists a harmonic measure with support in $\mathcal{M}$.
\end{rem}

\begin{rem} \label{rem_mes_harm}
Let $f$ be a continuous function with compact support on $\mathbb{P}(E)$, which is smooth in the direction of the leaves. By definition of the heat semigroup, $\frac{1}{t}(\mathcal{P}_t f - f)$ converges pointwise to the foliated Laplacian $\Delta_{\mathcal{F}} f$, when $t$ goes to zero. By arguments similar to the proof of Proposition \ref{Feller} or Fact 1 in \cite{Garn}, one shows that this convergence is uniform. It follows that, for a harmonic measure $\nu$, $\nu(\Delta_{\mathcal{F}} f) = 0$, for any such $f$. The converse holds, at least if $X$ is compact, but requires more work: see \cite{Garn}, Fact 4. We will not use this fact in the following.
\end{rem}

\subsection{Local structure}

Following \cite{Garn}, we give a local picture of harmonic measures. The fibers of the map $p: \mathbb{P}(E) \rightarrow X$ are transverse to the foliation. If $x$ belongs to $X$, there is a neighborhood $U$ of $x$ in $X$ such that parallel transport gives a diffeomorphism $p^{-1}(U) \cong U \times \mathbb{P}(E_x)$. 

With this identification, the harmonic measure $\nu$ on the neighborhood $p^{-1}(U)$ of $x$ disintegrates in the following way: there is a transversal finite measure $\gamma$ on $\mathbb{P}(E_x)$ and a nonnegative bounded measurable function $\phi$ on $U \times \mathbb{P}(E_x)$ such that $\phi$ is leaf-harmonic for $\gamma$-almost all leaves and 
\begin{equation} \label{local_struct}
 \nu(f) = \int_{\mathbb{P}(E_x)} \Big (\int_{U} \phi(y,t) f(y,t) d\mu(y) \Big) d\gamma(t),
\end{equation}
for any bounded measurable function $f$ on $U \times \mathbb{P}(E_x)$. We write $\nu_x$ for the well-defined finite measure $\phi(x,\cdot) \gamma$ on $\mathbb{P}(E_x)$. By definition, if $f$ is a bounded measurable function on $\mathbb{P}(E)$, then
\begin{equation}
 \nu(f) = \int_X \Big( \int_{\mathbb{P}(E_x)} f(x,t) d\nu_x(t) \Big) d\mu(x).
\end{equation}

\begin{prop}\label{mes_image}
 If $\nu$ is a harmonic measure on $\mathbb{P}(E)$, then the push-forward measure $p_\ast(\nu)$ is the probability measure $\mu$ on $X$.
\end{prop}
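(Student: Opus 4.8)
The plan is to show that the push-forward $p_\ast \nu$ is a stationary probability measure for the Brownian motion on $X$ itself, and then to use the finite-volume hypothesis to conclude that $\mu$ is the only such measure. The bridge between the two dynamics is the observation that the projection $p$ intertwines the foliated heat semigroup $(\mathcal{P}_t)$ with the heat semigroup $(P_t^X)$ of $(X,g)$. Precisely, I would first establish that for every bounded continuous $g$ on $X$,
\begin{equation*}
 \mathcal{P}_t(g\circ p) = (P_t^X g)\circ p.
\end{equation*}
This rests on the fact that each leaf $\mathcal{L}_x$, equipped with the metric making $p$ a local isometry, is a Riemannian covering of $X$: locally the foliation is the product $U\times\mathbb{P}(E_x)$ and each plaque $U\times\{t\}$ maps isometrically onto $U$. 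For a Riemannian covering the minimal heat kernels are related by summation over the fiber, namely $p^X(t,p(y),z)=\sum_{w\in p^{-1}(z)\cap\mathcal{L}_x} p^{\mathcal{L}_x}(t,y,w)$; integrating $g\circ p$ against the leafwise kernel and refolding this sum onto $X$ yields the displayed identity. Stochastic completeness of the leaves --- which holds since they are complete with Ricci curvature bounded below, exactly as in Remark~\ref{stoch_compl} --- guarantees that no mass is lost in this manipulation.

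Granting the intertwining, harmonicity of $\nu$ gives, for every bounded continuous $g$,
\begin{equation*}
 (p_\ast\nu)(g)=\nu(g\circ p)=\nu\big(\mathcal{P}_t(g\circ p)\big)=\nu\big((P_t^X g)\circ p\big)=(p_\ast\nu)(P_t^X g),
\end{equation*}
so $p_\ast\nu$ is invariant under $(P_t^X)$. By the local structure \eqref{local_struct}, $p_\ast\nu$ is absolutely continuous with respect to $\mu$, with bounded density $m(x)=\nu_x(\mathbb{P}(E_x))=\int_{\mathbb{P}(E_x)}\phi(x,\cdot)\,d\gamma$ (bounded because $\phi$ is bounded and $\gamma$ is finite). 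Since the heat kernel of $X$ is symmetric, $(P_t^X)$ is self-adjoint on $L^2(X,\mu)$, and the invariance of $m\,\mu$ translates, for all bounded $g$, into $\int_X g\,(P_t^X m - m)\,d\mu=0$, that is $P_t^X m = m$ for every $t$. Thus $m$ is a fixed point of the heat flow, hence smooth and harmonic on $X$.

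It remains to prove that a bounded harmonic function $m$ on $X$ is constant, and this is the step where Assumption~\ref{bound_geom} is essential; I expect it to be the main obstacle. When $X$ is compact it is immediate from connectedness and the maximum principle. In the general complete case, boundedness together with $\vol(X)<\infty$ places $m$ in $L^2(X)$, and Yau's $L^2$ Liouville theorem for complete manifolds forces $m$ to be constant. Alternatively one may argue probabilistically: $m(\gamma_t)$ is a bounded martingale whose almost sure limit is shift-invariant, hence $\mathbb{P}_\mu$-almost surely constant by ergodicity of $(\theta^t)$, from which one recovers $m\equiv\int_X m\,d\mu$. Since $p_\ast\nu$ and $\mu$ are both probability measures and $p_\ast\nu=m\,\mu$, the constant must equal $1$, giving $p_\ast\nu=\mu$.
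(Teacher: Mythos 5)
Your overall architecture matches the paper's: both proofs come down to showing that the fiber-mass density $m(x)=\nu_x(\mathbb{P}(E_x))$ (the paper's $\beta$) is a harmonic function on $X$ and then invoking a Liouville-type theorem under Assumption \ref{bound_geom}. Your route to harmonicity is longer but sound: the intertwining $\mathcal{P}_t(g\circ p)=(P_t^X g)\circ p$ is correct because each leaf, with the lifted metric, covers $X$ isometrically, so the minimal heat kernels refold over the fibers as you say; and the duality step $P_t^X m=m$ needs only Tonelli and the symmetry of the heat kernel, hence is valid for nonnegative $m\in L^1(X,\mu)$. The paper instead gets harmonicity of $m$ in one line from the disintegration \eqref{local_struct}, since $\phi(\cdot,t)$ is leaf-harmonic for $\gamma$-almost every $t$.

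The genuine gap is in your Liouville step. You assert that $m$ is \emph{bounded} because ``$\phi$ is bounded and $\gamma$ is finite'', but the local structure \eqref{local_struct} bounds $\phi$ only on each chart $U\times\mathbb{P}(E_x)$; nothing provides a bound uniform over a non-compact $X$, and you may not assume $X$ compact here (this section works under Assumptions \ref{bound_geom} and \ref{bound_higgs} alone). Without global boundedness, both of your closing arguments collapse: ``bounded plus finite volume implies $L^2$'' fails, so Yau's $L^2$ Liouville theorem does not apply, and $m(\gamma_t)$ is a priori only a nonnegative local martingale, so the bounded-martingale/ergodicity alternative is likewise unavailable. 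This is precisely where the paper is more careful: it uses only what Fubini actually delivers --- $m\geq 0$, $m\in L^1(X,\mu)$ with $\int_X m\,d\mu=1$ --- and concludes by Theorem 1 of \cite{Li_L1}, the $L^1$ Liouville theorem for nonnegative harmonic functions on complete manifolds with Ricci curvature bounded from below. Substituting Li's theorem for your $L^2$/martingale step repairs the proof; everything else in your argument goes through, and the semigroup intertwining is a nice dynamical alternative to the paper's one-line derivation of harmonicity.
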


\begin{proof}
 The function $\beta: x \in X \mapsto \nu_x(\mathbb{P}(E_x))$ is harmonic, as can be seen from the local description of $\nu$. Moreover, $\beta$ is nonnegative and integrable, of integral $1$, by Fubini theorem. By Theorem 1 of \cite{Li_L1}, the condition on the Ricci curvature of $g$ implies that $\beta$ is constant. Since $\mu(\beta) = 1$, the constant is $1$.
\end{proof}

\paragraph{The set of harmonic measures} 

The set of harmonic (probability) measures is a closed convex subset of the set of probability measures. By Proposition \ref{mes_image}, the proof of Theorem \ref{exist_harm} and Prokhorov theorem, it is a compact subset.

\begin{defn}
 A harmonic probability measure $\nu$ on the foliated space $\mathbb{P}(E)$ is \emph{ergodic} if any leaf-saturated measurable subset $S$ of $\mathbb{P}(E)$ is of mass $0$ or $1$.
\end{defn}

\begin{prop}\label{convex}
 The extremal points of the compact convex set of harmonic measures are the ergodic harmonic measures.
\end{prop}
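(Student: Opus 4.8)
The plan is to prove the two inclusions separately, following the classical equivalence between ergodicity and extremality for invariant measures of a Markov process, here adapted to the foliated heat semigroup $(\mathcal{P}_t)$ acting on $L^\infty(\nu)$. Since $\nu$ is harmonic it is $(\mathcal{P}_t)$-stationary, so each $\mathcal{P}_t$ extends to a positive $\nu$-preserving contraction with $\mathcal{P}_t 1 = 1$; this is the structure I will exploit.

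For the implication \emph{extremal $\Rightarrow$ ergodic} I argue by contraposition. Suppose $S\subset\mathbb{P}(E)$ is a leaf-saturated measurable set with $0<\nu(S)<1$. Because $S$ is a union of leaves and $\mathcal{P}_t$ averages only along the leaf through each point, one checks the pointwise identity $\mathcal{P}_t(\mathbf{1}_S f)=\mathbf{1}_S\,\mathcal{P}_t f$ for every bounded $f$: at $x\in S$ the leaf $\mathcal{L}_x$ lies in $S$ so $\mathbf{1}_S\equiv 1$ on it, while at $x\notin S$ one has $\mathbf{1}_S\equiv 0$ on $\mathcal{L}_x$. Combining this with the harmonicity $\nu(\mathcal{P}_t g)=\nu(g)$ gives $(\nu|_S)(\mathcal{P}_t f)=\nu(\mathbf{1}_S\,\mathcal{P}_t f)=\nu(\mathcal{P}_t(\mathbf{1}_S f))=\nu(\mathbf{1}_S f)=(\nu|_S)(f)$, so the restriction $\nu|_S$ is harmonic, and likewise $\nu|_{S^c}$. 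After normalization this exhibits $\nu=\nu(S)\,\nu_S+\nu(S^c)\,\nu_{S^c}$ as a nontrivial convex combination of two distinct harmonic probability measures, so $\nu$ is not extremal.

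For \emph{ergodic $\Rightarrow$ extremal}, suppose $\nu=t\nu_1+(1-t)\nu_2$ with $t\in(0,1)$ and $\nu_1,\nu_2$ harmonic. Then $\nu_1\ll\nu$ with bounded density $\psi=d\nu_1/d\nu$, and I must show $\psi\equiv 1$. The key lemma is that a bounded function invariant under a $\nu$-stationary leafwise Markov semigroup is constant along $\nu$-almost every leaf: if $Q_t\psi=\psi$ then Jensen gives $Q_t(\psi^2)\geq(Q_t\psi)^2=\psi^2$, and integrating against the stationary $\nu$ forces the equality $Q_t(\psi^2)=\psi^2$ $\nu$-a.e., i.e.\ $\psi$ is $Q_t(x,\cdot)$-a.s.\ constant; since the relevant kernels charge the whole leaf, $\psi$ is leafwise constant. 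To apply this I pass to the time-reversed semigroup $\hat{\mathcal{P}}_t$, the $\nu$-adjoint defined by $\nu((\mathcal{P}_t f)g)=\nu(f\,\hat{\mathcal{P}}_t g)$, which is again a $\nu$-stationary leafwise Markov semigroup; the invariance of $\nu_1$ translates exactly into $\hat{\mathcal{P}}_t\psi=\psi$. Hence $\psi$ is leafwise constant, so each level set $\{\psi>c\}$ coincides modulo $\nu$ with a leaf-saturated set and therefore has measure $0$ or $1$ by ergodicity; thus $\psi$ is $\nu$-a.e.\ constant, and $\int\psi\,d\nu=1$ forces $\psi\equiv 1$, i.e.\ $\nu_1=\nu$. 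So $\nu$ is extremal.

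The main obstacle is precisely the hard direction's use of the \emph{time-reversed} foliated diffusion: one needs the $\nu$-adjoint $\hat{\mathcal{P}}_t$ to be genuinely realized by a Markov kernel supported on the leaves, so that invariance under it still yields leafwise constancy. This is where the fine structure of harmonic measures enters, namely the local disintegration \eqref{local_struct} with its leafwise-\emph{harmonic} density $\phi$: the forward leafwise diffusion is symmetric with respect to leafwise Riemannian volume but \emph{not} with respect to $\nu$ unless $\phi$ is leafwise constant, so the reversibility must be set up carefully (via a Doob-type transform by $\phi$) rather than taken for granted. If one prefers to bypass the adjoint, an alternative is to invoke Choquet theory on the compact convex set of harmonic measures and identify the components of the ergodic decomposition with the ergodic measures, but this merely relocates the same analytic point.
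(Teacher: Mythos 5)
Your argument is correct in substance, but note how it sits relative to the paper: the paper's entire proof of this proposition is a one-line citation to Lemma F and Proposition 6 of \cite{Garn}, so what you have done is reconstruct a semigroup-theoretic version of Garnett's argument rather than follow the text. Your first direction is fine as written; the only point to make explicit is that stationarity $\nu(\mathcal{P}_t f)=\nu(f)$, stated for continuous bounded $f$, extends to bounded measurable $f$ because $\mathcal{P}_t^\ast \nu$ and $\nu$ are Borel measures agreeing on continuous bounded functions, hence equal. In the hard direction, the obstacle you flag resolves more cheaply than you suggest: conservativity of the reversed semigroup need not be taken for granted, since stationarity alone already gives $\hat{\mathcal{P}}_t 1 = 1$ $\nu$-a.e. (test $\nu(\mathcal{P}_t f)=\nu(f)$ against all $f$), and the local disintegration \eqref{local_struct} together with the symmetry of the leafwise heat kernel identifies $\hat{\mathcal{P}}_t$ with the Doob transform $\hat{\mathcal{P}}_t g = \phi^{-1}\mathcal{P}_t(\phi g)$, whose kernel $p^{\mathcal{L}}(t,x,y)\,\phi(y)/\phi(x)$ has full support in the leaf; so equality in your Jensen step does yield leafwise constancy of $\psi$, and the identity $\hat{\mathcal{P}}_t 1 = 1$ is precisely the a.e.\ invariance $\mathcal{P}_t\phi=\phi$ of the leafwise harmonic densities, which is the analytic content of Garnett's structure theory. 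Two residual technicalities remain, both standard and both handled in \cite{Garn} and \cite{Cand}: measurability of $\mathcal{P}_t$ on bounded measurable functions, and upgrading ``leafwise a.e.\ constant'' to a genuinely saturated set mod $\nu$ before invoking ergodicity (replace $\psi$ by $\mathcal{P}_1\psi$, which is honestly constant on every leaf where $\psi$ is volume-a.e.\ constant). Finally, the paper already contains a route that bypasses time reversal altogether, and which your closing remark undersells: the correspondence $\nu\mapsto\bar{\nu}$ of Proposition \ref{inv_erg} is affine and injective (the time-zero marginal recovers $\nu$), sends harmonic measures to shift-invariant ones and ergodic to ergodic; since an ergodic invariant measure of the shift semiflow is extremal among invariant measures by the usual Birkhoff argument, extremality of $\nu$ follows with no adjoint construction at all --- given Proposition \ref{inv_erg}, this is the proof of least resistance, and it genuinely avoids, rather than relocates, the reversibility issue.
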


\begin{proof}
 This is a generalization, when the foliated space may be non-compact, of Lemma F and Proposition 6 of \cite{Garn}. 
\end{proof}

\subsection{Harmonic measures and Lyapunov exponents}

Let $W(\mathbb{P}(E))$ be the space of continuous paths $\gamma:[0,+\infty[ \rightarrow \mathbb{P}(E)$, whose images are contained in a single leaf of the foliation. If $u$ is in $\mathbb{P}(E)$, then the subspace $W_u(\mathbb{P}(E))$ can be identified with the space $W_{p(u)}(X)$. Hence, $W_u(\mathbb{P}(E))$ inherits a probability measure $\mathbb{P}_u$ of \emph{foliated Brownian motion} starting at $u$. Given a probability measure $\nu$ on $\mathbb{P}(E)$, we define a probability measure $\bar{\nu}$ on $W(\mathbb{P}(E))$ by
\begin{equation}
 \bar{\nu}(f) := \int_{\mathbb{P}(E)} \Big(\int_{W_u(\mathbb{P}(E))} f(\gamma) \mathbb{P}_u(\gamma) \Big) \nu(u),
\end{equation}
where $f$ is a bounded measurable function on $W(\mathbb{P}(E))$.

As before, we define the shift $(\theta_t)$ from $W(\mathbb{P}(E))$ to itself. In \cite[\S 6]{Cand}, the following is shown:

\begin{prop}\label{inv_erg}
 The dynamical system $(W(\mathbb{P}(E)),\bar{\nu},(\theta_t))$ is invariant if and only if $\nu$ is harmonic; it is ergodic if and only if $\nu$ is ergodic.
\end{prop}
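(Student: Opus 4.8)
The plan is to reduce both equivalences to the Markov property of the foliated Brownian motion, combined with the stationarity encoded by harmonicity. Throughout I write $\mathcal{P}_t$ for the foliated heat semigroup, so that $\mathcal{P}_t f(u) = \mathbb{E}_u[f(\gamma_t)]$ for $f$ bounded measurable, and I let $(\mathcal{F}_t)$ denote the natural filtration on $W(\mathbb{P}(E))$. For the first equivalence I would first test on cylinder functions $F(\gamma) = f_0(\gamma_{t_0})\cdots f_n(\gamma_{t_n})$, since these generate the relevant $\sigma$-algebra. Iterating the kernels $\mathcal{P}_{t_{i+1}-t_i}$ via the Markov property gives $\int_{W(\mathbb{P}(E))} F\, d\bar\nu = \nu(G)$, where $G(u) := \mathbb{E}_u[F]$. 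Applying the Markov property at time $s$ to the shifted function yields $\mathbb{E}_u[F\circ\theta_s] = \mathcal{P}_s G(u)$, hence $\int_{W(\mathbb{P}(E))} F\circ\theta_s\, d\bar\nu = (\mathcal{P}_s\nu)(G)$. Thus $\theta_s$-invariance of $\bar\nu$ on all such $F$ is precisely the identity $(\mathcal{P}_s\nu)(G)=\nu(G)$; specializing to $F = f(\gamma_0)$ forces $\mathcal{P}_s\nu = \nu$, i.e. harmonicity, while conversely harmonicity gives invariance on cylinder functions and then on all of $W(\mathbb{P}(E))$ by a monotone-class argument.

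Assume now that $\nu$ is harmonic, so that $(\theta_t)$ preserves $\bar\nu$ and the marginal law of $\gamma_t$ is $\nu$ for every $t$. The direction ``shift ergodic $\Rightarrow$ $\nu$ ergodic'' is immediate: if $S\subset\mathbb{P}(E)$ is leaf-saturated, then since every path of $W(\mathbb{P}(E))$ stays in a single leaf, the cylinder set $\{\gamma : \gamma_0\in S\}$ is $\theta_t$-invariant and has $\bar\nu$-measure $\nu(S)$, so ergodicity of the shift forces $\nu(S)\in\{0,1\}$.

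The substantial direction is ``$\nu$ ergodic $\Rightarrow$ shift ergodic,'' where the main work lies. Given a bounded $\theta_t$-invariant $F$, set $\phi(u) := \mathbb{E}_u[F]$. The Markov property together with invariance give $\mathcal{P}_s\phi = \phi$, so $\phi(\gamma_t)$ is a bounded martingale, and the same two facts identify it with $\mathbb{E}[F\mid\mathcal{F}_t]$, which converges $\bar\nu$-almost everywhere to $F$ by the martingale convergence theorem. To upgrade this to $F=\phi(\gamma_0)$, I use orthogonality of martingale increments: since $\gamma_t$ has law $\nu$ for every $t$ by stationarity, one gets $\mathbb{E}[(\phi(\gamma_t)-\phi(\gamma_0))^2] = \mathbb{E}[\phi(\gamma_t)^2] - \mathbb{E}[\phi(\gamma_0)^2] = 0$, so $\phi(\gamma_t)=\phi(\gamma_0)$ a.e. for each $t$, whence $F=\phi(\gamma_0)$. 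The same identity shows that, for $\nu$-almost every $u$, the function $\phi$ is $\mathbb{P}_u$-almost surely constant along trajectories; since foliated Brownian paths fill out the leaf, $\phi$ is constant on $\nu$-almost every leaf, i.e. its level sets are leaf-saturated modulo null sets. Ergodicity of $\nu$ then forces $\phi$, and hence $F$, to be $\bar\nu$-almost everywhere constant, proving ergodicity of the shift.

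I expect the orthogonal-increments step and its consequence to be the main obstacle: making rigorous that $\phi(\gamma_t)$ coincides with $F$, and that almost-everywhere leafwise constancy genuinely follows, requires care with the continuous-time filtration, the Feller regularity established in Proposition \ref{Feller}, and the fact that the support of the leafwise heat kernel is the entire leaf. Once these points are secured the remaining arguments are formal manipulations of the Markov and stationarity properties.
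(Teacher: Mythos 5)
Your argument is correct, but note that the paper itself offers no proof of Proposition \ref{inv_erg}: it defers entirely to \cite[\S 6]{Cand}, whose treatment goes back to \cite{Garn}. What you have written is, in substance, a reconstruction of that classical Garnett--Candel proof. The cylinder-function computation identifying $\theta_s$-invariance of $\bar\nu$ with $\mathcal{P}_s\nu = \nu$ (via $\mathbb{E}_u[F\circ\theta_s] = \mathcal{P}_s G(u)$ and a monotone-class extension) is exactly the standard route, and your ergodicity argument --- set $\phi(u)=\mathbb{E}_u[F]$, identify $\mathbb{E}[F\mid\mathcal{F}_t]=\phi(\gamma_t)$ using shift-invariance plus the Markov property, apply martingale convergence, and kill the increments by the identity $\mathbb{E}\big[(\phi(\gamma_t)-\phi(\gamma_0))^2\big]=\mathbb{E}[\phi(\gamma_t)^2]-\mathbb{E}[\phi(\gamma_0)^2]=0$, which uses stationarity of the one-dimensional marginals --- is Garnett's original martingale argument. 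The one step you rightly flag as the crux is the passage from ``$\phi$ is $\mathbb{P}_u$-a.s.\ constant along trajectories for $\nu$-a.e.\ $u$'' to ``the level sets of $\phi$ are leaf-saturated modulo $\nu$-null sets'': positivity of the leafwise heat kernel only yields $\phi(v)=\phi(u)$ for leafwise-\emph{volume}-a.e.\ $v\in\mathcal{L}_u$, and to convert this into a genuinely saturated modification one must invoke the local disintegration \eqref{local_struct} of the harmonic measure, i.e.\ the absolute continuity of its conditionals on local leaves with respect to leafwise volume --- so harmonicity of $\nu$ is used a second time here, exactly as in the cited sources. A small simplification worth knowing: the zero-variance step can be run entirely in space, since $\mathcal{P}_t\phi=\phi$ together with $\nu(\mathcal{P}_t(\phi^2))=\nu(\phi^2)$ and the pointwise Jensen inequality $\mathcal{P}_t(\phi^2)\geq(\mathcal{P}_t\phi)^2$ forces equality $\nu$-a.e., and the equality case gives leafwise-a.e.\ constancy of $\phi$ under the kernel $p^{\mathcal{L}}(t,u,\cdot)$ without appealing to path space at all.
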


\paragraph{The cocycle in $\mathbb{P}(E)$}

Let $(\bar{H}^t)_{t \in [0,+\infty[}$ be the functions defined on $W(\mathbb{P}(E))$ by
\begin{equation}
\bar{H}^t(\gamma) := \log \frac{h(s(\gamma(t)))}{h(s(\gamma(0)))},
\end{equation}
where $s$ is an arbitrary flat lift of $\gamma$ to the tautological bundle $\mathcal{O}(-1) \rightarrow \mathbb{P}(E)$ and $h$ is the metric on $\mathcal{O}(-1)$, induced from the metric $h$ on $E \rightarrow X$. These functions satisfy the cocycle relation:
\begin{equation}\label{cocycle}
 \bar{H}^{t+t'}(\gamma) = \bar{H}^t(\gamma) + \bar{H}^{t'}(\theta_t(\gamma)).
\end{equation}

\begin{lem}
 For any probability measure $\nu$ on $\mathbb{P}(E)$, the function $\bar{H}^t$ is in $L^1(W(\mathbb{P}(E)),\bar{\nu})$. Moreover,
\begin{equation} \label{ineq_cocycle}
 \mathbb{E}_{\bar{\nu}}(\bar{H}^t) \leq \mathbb{E}_{\mu}(H^t) 
\end{equation}
\end{lem}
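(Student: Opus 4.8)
The plan is to identify $\bar H^t$ explicitly as a Rayleigh-type quotient of the parallel transport and then compare it, pointwise, with the cocycle $H^t$ on the base. Fix a path $\gamma \in W(\mathbb{P}(E))$ and let $\bar\gamma = p\circ\gamma \in W(X)$ be its projection. Write $v := s(\gamma(0))$ for a nonzero vector in the line $\gamma(0) \subset E_{\bar\gamma(0)}$ determined by the flat lift $s$. Since the foliation of $\mathbb{P}(E)$ is the one induced by the flat connection, the flat lift of $\gamma$ to $\mathcal{O}(-1)$ is exactly the parallel transport of $v$, that is $s(\gamma(t)) = P_{\bar\gamma,0,t}\, v$. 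Hence
\begin{equation*}
 \bar H^t(\gamma) = \log \frac{||P_{\bar\gamma,0,t}\, v||}{||v||}.
\end{equation*}
From the definition of the operator norm, for every nonzero $v$ one has $||P^{-1}||^{-1} \le ||Pv||/||v|| \le ||P||$, so that
\begin{equation*}
 -\log ||P_{\bar\gamma,0,t}^{-1}|| \;\le\; \bar H^t(\gamma) \;\le\; \log ||P_{\bar\gamma,0,t}|| \;=\; H^t(\bar\gamma).
\end{equation*}
The right-hand inequality is already the pointwise version of \eqref{ineq_cocycle}; the left-hand inequality will give integrability of the negative part.

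For the integrability statement I would reduce everything to the universal cover. Under the identification $W_u(\mathbb{P}(E)) \cong W_{p(u)}(X)$ and then the identification of Remark \ref{corr_univ} with $W_{\tilde x}(\tilde X)$ (the corresponding probability measures agreeing), the parallel transport $P_{\bar\gamma,0,t}$ becomes parallel transport on $\pi^\ast E$ along the lifted path. Because $\tilde X$ is simply connected and $\pi^\ast E$ is flat, this transport depends only on the endpoints, so it equals $P_{\tilde x,\tilde\gamma(t)}$ and may be computed along the geodesic. The bound \eqref{par_trans}, applied both to $P_{\tilde x,\tilde\gamma(t)}$ and to its inverse $P_{\tilde\gamma(t),\tilde x}$, then yields $|\bar H^t(\gamma)| \le C\, d_{\tilde X}(\tilde x,\tilde\gamma(t))$. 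Averaging and invoking the Gaussian tail estimate quoted in the proof of Theorem \ref{H1_L1} (Stroock, eq. $(8.65)$) shows that $\mathbb{E}_{p(u)}(|\bar H^t|)$ is bounded by a constant depending only on $t$, the dimension and the Ricci bound, in particular uniformly in $u$. Integrating this uniform bound against the probability measure $\nu$ gives $\bar H^t \in L^1(W(\mathbb{P}(E)),\bar\nu)$.

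It then remains to integrate the pointwise inequality $\bar H^t(\gamma) \le H^t(\bar\gamma)$ against $\bar\nu$. Unwinding the definition of $\bar\nu$ and applying Fubini, and using that $H^t(\bar\gamma)$ depends only on the projected path,
\begin{equation*}
 \mathbb{E}_{\bar\nu}(\bar H^t) \le \int_{\mathbb{P}(E)} \mathbb{E}_{p(u)}(H^t)\, d\nu(u) = \int_X \mathbb{E}_x(H^t)\, d(p_\ast\nu)(x).
\end{equation*}
The key point, and the only place where more than the pointwise comparison enters, is that $p_\ast\nu = \mu$, so that the last integral equals $\mathbb{E}_\mu(H^t)$; for the harmonic measures that are the objects of interest this is furnished by Proposition \ref{mes_image}, and \eqref{ineq_cocycle} follows.

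I expect the only genuinely delicate step to be the integrability: one must check that the bound on $\mathbb{E}_{p(u)}(|\bar H^t|)$ is \emph{uniform} in the starting point $u$, since this is exactly what allows integration against an arbitrary $\nu$. This uniformity rests on the path-independence of parallel transport on the simply connected cover together with the basepoint-independent heat-kernel estimate. The comparison inequality itself is then immediate from the operator-norm bound $||Pv||/||v|| \le ||P||$ and the projection identity $p_\ast\nu = \mu$.
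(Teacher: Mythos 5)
Your proof is correct, and its comparison half coincides with the paper's: both arguments rest on writing $\bar H^t(\gamma)=\log\big(h(P_{\bar\gamma,0,t}v)/h(v)\big)$ for the flat lift and on the two-sided operator-norm bound, which in the paper appears as $|\bar H^t(\tilde\gamma)|\le\max\big(\log||P_{p\circ\tilde\gamma,0,t}||,\log||P_{p\circ\tilde\gamma,t,0}||\big)$. Where you genuinely diverge is the integrability step. The paper stays downstairs: it inserts this pointwise bound into the disintegration \eqref{integ_triple} and uses the symmetry of the heat kernel (reversibility of Brownian motion with respect to $\mu$) to identify the time-reversed term with the forward one, obtaining $\mathbb{E}_{\bar\nu}(|\bar H^t|)\le 2\,\mathbb{E}_\mu(|H^t|)$ and then quoting Theorem \ref{H1_L1} as a black box. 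You instead pass to the universal cover and combine the Lipschitz estimate \eqref{par_trans} (applied to the transport and its inverse, using path-independence on $\tilde X$) with Stroock's Gaussian tail bound to get $\mathbb{E}_{p(u)}(|\bar H^t|)\le C(t)$ uniformly in the starting point $u$. This costs you a re-derivation of the estimates inside the proof of Theorem \ref{H1_L1}, but it buys something real: your uniform bound integrates against an \emph{arbitrary} probability measure $\nu$, whereas the paper's reduction via heat-kernel symmetry silently requires the projected path measure to be $\mathbb{P}_\mu$ --- indeed \eqref{integ_triple} already disintegrates $\nu$ over $d\mu(z)$, i.e.\ assumes $p_\ast\nu=\mu$. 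So for the $L^1$ claim your argument literally covers the ``any probability measure'' wording, while the paper's proof covers measures projecting to $\mu$. You are also right to flag that the inequality \eqref{ineq_cocycle} is not a consequence of the pointwise comparison and Fubini alone: it needs $p_\ast\nu=\mu$, which for the harmonic measures that actually matter is Proposition \ref{mes_image}; the paper makes the same implicit assumption, so your remark is the honest reading of the statement rather than a defect of your proof.
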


\begin{proof}
 One has
\begin{equation}\label{integ_triple}
 \mathbb{E}_{\bar{\nu}}(|\bar{H}^t|) = \int_X \Big( \int_{\mathbb{P}(E_z)} \Big( \int_{W_u(\mathbb{P}(E))} |\bar{H}^t(\tilde{\gamma})| \mathbb{P}_u(\tilde{\gamma}) \Big) \nu_z(u) \Big) d\mu(z).
\end{equation}

If $\gamma$ is a path in $X$, we denote by $||P_{\gamma,t_1,t_2}||$ the operator norm of the parallel transport in $E$, along $\gamma_{|[t_1,t_2]}$. It follows that
$$ |\bar{H}^t(\tilde{\gamma})| \leq \max(\log ||P_{p \circ \tilde{\gamma}, 0,t}||, \log ||P_{p \circ \tilde{\gamma}, t, 0}||).$$
In particular, $|\bar{H}^t(\tilde{\gamma})| \leq \big|\log ||P_{p \circ \tilde{\gamma}, 0,t}|| \big| + \big|\log ||P_{p \circ \tilde{\gamma}, t, 0}|| \big|$. By using this inequality in equation \eqref{integ_triple}, we obtain:
\begin{equation*}
 \mathbb{E}_{\bar{\nu}}(|\bar{H}^t|) \leq \int_X \Big(\int_{W_z(X)} \big(\big|\log ||P_{\gamma, 0,t}|| \big| + \big|\log ||P_{\gamma, t, 0}|| \big|\big) \mathbb{P}_z(\gamma) \Big) d\mu(z).
\end{equation*}
The symmetry of the heat kernel then implies that $\mathbb{E}_{\bar{\nu}}(|\bar{H}^t|) \leq 2 \mathbb{E}_{\mu}(|H^t|)$ and Theorem \ref{H1_L1} shows that $\bar{H}^t$ is in $L^1$. Inequality \eqref{ineq_cocycle} is obtained by the same proof, without the absolute values.
\end{proof}

\begin{defn}
 The (first) \emph{Lyapunov exponent} with respect to $\nu$ is $\mathbb{E}_{\bar{\nu}}(\bar{H}^1)$. It is denoted by $\lambda(\nu)$.
\end{defn}

\begin{prop}
We assume that $\nu$ is harmonic and ergodic. Then, for $\bar{\nu}$-almost every path $\tilde{\gamma}$ in $W(\mathbb{P}(E))$, the limit of $\frac{\bar{H}^t(\tilde{\gamma})}{t}$ when $t$ goes to $+\infty$ exists and is equal to $\lambda(\nu)$.
\end{prop}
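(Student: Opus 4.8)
The plan is to exploit the crucial fact that, unlike the cocycle $(H^t)$ of Section~1, the cocycle $\bar H^t$ is \emph{additive} (equation \eqref{cocycle}), so that the assertion is a genuine pointwise ergodic theorem rather than a Kingman-type statement. By Proposition~\ref{inv_erg}, the hypotheses that $\nu$ is harmonic and ergodic mean exactly that $(W(\mathbb{P}(E)),\bar\nu,(\theta_t))$ is measure-preserving and ergodic. Iterating \eqref{cocycle} with $t=t'=\cdots=1$ gives, for every integer $n\ge 1$,
\[ \bar H^n = \sum_{k=0}^{n-1} \bar H^1\circ\theta_1^{\,k}. \]
Since $\bar H^1\in L^1(\bar\nu)$ by the preceding lemma, I would apply Birkhoff's pointwise ergodic theorem to the measure-preserving transformation $\theta_1$, obtaining that $\bar H^n/n$ converges $\bar\nu$-almost everywhere to a $\theta_1$-invariant function $\Psi$ with $\int\Psi\,d\bar\nu=\int\bar H^1\,d\bar\nu=\lambda(\nu)$.

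The first subtlety is that the time-one map $\theta_1$ need not be ergodic even though the flow $(\theta_t)$ is, so a priori $\Psi$ is only $\theta_1$-invariant and possibly non-constant. To upgrade this, I would show that $\Psi$ is invariant under the whole flow. Writing the cocycle identity in two ways, $\bar H^{n+s}=\bar H^s+\bar H^n\circ\theta_s=\bar H^n+\bar H^s\circ\theta_n$, gives
\[ \bar H^n\circ\theta_s = \bar H^n + \bar H^s\circ\theta_n - \bar H^s. \]
Dividing by $n$ and letting $n\to\infty$, both remainder terms vanish almost everywhere; the key estimate is that $\Phi\circ\theta_n/n\to 0$ a.e. for $\Phi:=\sup_{0\le s\le 1}|\bar H^s|$, which follows since $\frac1n\sum_{k=0}^{n-1}\Phi\circ\theta_1^{\,k}$ converges by Birkhoff (so the individual terms divided by $n$ tend to $0$), provided $\Phi\in L^1(\bar\nu)$. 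Hence $\Psi\circ\theta_s=\Psi$ a.e. for every $s$, and ergodicity of the flow forces $\Psi=\lambda(\nu)$ a.e. For arbitrary real $t$ I would then write $t=n+s$ with $n=\lfloor t\rfloor$ and $0\le s<1$, so that $\bar H^t=\bar H^n+\bar H^s\circ\theta_n$; dividing by $t$, the first summand tends to $\lambda(\nu)$ by the integer case while $|\bar H^s\circ\theta_n|/t\le (\Phi\circ\theta_n)/n\to 0$, yielding the claimed convergence.

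I expect the main work to be the verification of the integrability hypothesis $\Phi=\sup_{0\le s\le 1}|\bar H^s|\in L^1(\bar\nu)$, since this is what makes both Birkhoff applicable and the remainder terms negligible. It should follow from the bound $|\bar H^s(\tilde\gamma)|\le\max(\log\|P_{p\circ\tilde\gamma,0,s}\|,\log\|P_{p\circ\tilde\gamma,s,0}\|)$ already used in the preceding lemma, together with the symmetry of the heat kernel and Theorem~\ref{H1_L1}, exactly as in the derivation of inequality \eqref{ineq_cocycle}.

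Alternatively, and more economically, one may simply invoke the continuous-time version of Kingman's subadditive ergodic theorem applied to the additive—hence subadditive—cocycle $\bar H^t$ over the ergodic flow: its almost-sure limit equals $\inf_{t>0}\tfrac1t\,\mathbb{E}_{\bar\nu}(\bar H^t)$, and additivity together with $\theta_t$-invariance of $\bar\nu$ gives $\mathbb{E}_{\bar\nu}(\bar H^t)=t\,\lambda(\nu)$, so the limit is $\lambda(\nu)$. This route bypasses entirely the possible non-ergodicity of $\theta_1$, at the cost of citing the continuous-time Kingman theorem, and still requires the same integrability input $\Phi\in L^1(\bar\nu)$.
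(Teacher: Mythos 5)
Your proposal is correct and follows essentially the same route as the paper: the paper's proof is a one-line appeal to the Birkhoff ergodic theorem, justified by the invariance and ergodicity of $\bar{\nu}$ from Proposition \ref{inv_erg}, exactly the input you use. What you have written out --- discretizing via the time-one map, upgrading the $\theta_1$-invariant limit to flow-invariance, interpolating between integer times, and checking $\sup_{0\le s\le 1}|\bar{H}^s|\in L^1(\bar{\nu})$ by the estimates of the preceding lemma and Theorem \ref{H1_L1} --- is the standard detailed implementation of that one-line argument, so no gap and no genuinely different method.
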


\begin{proof}
 This is an application of Birkhoff ergodic theorem, since $\bar{\nu}$ is invariant and ergodic by Proposition \ref{inv_erg}.
\end{proof}

The interest of introducing these Lyapunov exponents $\lambda(\nu)$ that depend on a probability measure in $\mathbb{P}(E)$ lies in the fact that they can be computed as an integral in space.

We define a function $\phi$ on $\mathbb{P}(E)$ by
$$\phi(u) = \frac 12 \Delta_{\mathcal{F}} \log h(s(v))_{|v = u},$$
where $\Delta_{\mathcal{F}}$ is the foliated Laplacian in $\mathbb{P}(E)$ and $s$ is a local flat section of $\mathcal{O}(-1)$ in a neighborhood of $u$. 

\begin{prop} \label{form_spat}
We assume that $\phi$ is bounded on $\mathbb{P}(E)$. Then, for any harmonic measure $\nu$ on $\mathbb{P}(E)$,
$$\lambda(\nu) = \int_{\mathbb{P}(E)} \phi(u) d\nu(u).$$
\end{prop}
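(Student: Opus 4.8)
The plan is to recognize the cocycle $\bar{H}^t$ as a leafwise coboundary and then apply Dynkin's formula along the leaves. Fix a flat lift $s$ of $\mathcal{O}(-1)$ and set $F := \log h(s)$; although $s$ is only defined up to a multiplicative constant along each leaf, the function $F$ is defined up to an additive constant, so its leafwise Laplacian is a globally well-defined function on $\mathbb{P}(E)$, equal to $2\phi$ by the definition of $\phi$. With this notation the defining formula for the cocycle reads $\bar{H}^t(\gamma) = F(\gamma(t)) - F(\gamma(0))$ for every path $\gamma$ contained in a single leaf. The heart of the argument is the identity
$$ \mathbb{E}_u(\bar{H}^t) = \mathbb{E}_u\Big(\int_0^t \phi(\gamma_s)\,ds\Big), $$
valid for every starting point $u \in \mathbb{P}(E)$, which is exactly Dynkin's formula \eqref{Dynkin} applied to $F$ on the leaf through $u$, using that $\tfrac12 \Delta_{\mathcal{F}} F = \phi$.

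Granting this identity, the rest is a short computation. Since $\mathbb{E}_u(\phi(\gamma_s)) = (\mathcal{P}_s\phi)(u)$ by definition of the heat semigroup, integrating over $u$ against $\nu$ and applying Fubini (licit because $\phi$ is bounded and $\bar{H}^t \in L^1(\bar{\nu})$) gives
$$ \mathbb{E}_{\bar{\nu}}(\bar{H}^t) = \int_0^t \Big(\int_{\mathbb{P}(E)} (\mathcal{P}_s\phi)(u)\, d\nu(u)\Big)\, ds = \int_0^t \mathcal{P}_s(\nu)(\phi)\, ds. $$
Because $\phi$ is continuous and bounded and $\nu$ is harmonic, $\mathcal{P}_s(\nu)(\phi) = \nu(\phi)$ for every $s$, so the right-hand side equals $t\,\nu(\phi)$. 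Taking $t = 1$ yields $\lambda(\nu) = \mathbb{E}_{\bar{\nu}}(\bar{H}^1) = \int_{\mathbb{P}(E)} \phi\, d\nu$, as claimed.

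The main obstacle is to justify the leafwise Dynkin identity, since \eqref{Dynkin} is stated only for compactly supported test functions, whereas $F$ is neither compactly supported nor bounded. I would handle this by lifting to the universal cover, where---as in the proof of Proposition \ref{Feller}---the foliation is a product and the leaf through $u$ is identified with $\tilde{X}$, on which a chosen flat $s$ makes $F$ a globally defined smooth function. Two features keep the argument under control: first, $\tfrac12\Delta_{\mathcal{F}} F = \phi$ is bounded by hypothesis; second, the parallel transport estimate \eqref{par_trans} gives the linear bound $|F(\gamma_t) - F(\gamma_0)| \leq C\, d_{\tilde{X}}(\tilde{x}, \gamma_t)$. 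One then applies Dynkin's formula to the process stopped at the exit time $\tau_R$ of the ball of radius $R$ about $\tilde{x}$ (where a compactly supported modification of $F$ agrees with $F$), and lets $R \to \infty$. The boundedness of $\phi$ controls the time-integral term, while the Gaussian tail estimate for $\sup_{t\in[0,t_0]} d_{\tilde{X}}(\tilde{x}, \gamma_t)$ quoted in the proof of Theorem \ref{H1_L1} controls the boundary term and supplies the dominated-convergence bounds needed both to pass to the limit in $R$ and to interchange the various integrals.
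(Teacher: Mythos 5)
Your proof is correct and follows essentially the same route as the paper: both reduce, via harmonicity/shift-invariance, to evaluating $\mathbb{E}_{\bar{\nu}}(\bar{H}^t)$ at a fixed time, and both rest on Dynkin's formula applied leafwise to $F = \log h(s)$, with the boundedness of $\phi$ justifying the interchanges of limits and integrals. The differences are organizational rather than substantive --- you use the time-integrated identity $\mathbb{E}_u(\bar{H}^t) = \int_0^t (\mathcal{P}_s\phi)(u)\,ds$ together with $\mathcal{P}_s(\nu) = \nu$, where the paper differentiates $g(t,u)$ at $t=0$ and derives under the integral --- and your stopping-time argument on the universal cover (exit times $\tau_R$, Gaussian tail bound quoted in the proof of Theorem \ref{H1_L1}) is a correct fleshing-out of exactly the step the paper leaves as a sketch (``the equality can be proved using estimates on the Brownian motion, as in the proof of Theorem \ref{H1_L1}'').
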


\begin{proof}
 Since $\nu$ is harmonic, we have that $\lambda(\nu) = \frac 1t \mathbb{E}_{\bar{\nu}}(\bar{H}^t)$ for any positive $t$. Hence,
$$\lambda(\nu) = \frac 1t \int_{\mathbb{P}(E)} \Big( \int_{\mathcal{L}_u} p^{\mathcal{L}_u}(t,u,v) \log h(s_u(v)) dv \Big) d\nu(u),$$
where $\mathcal{L}^u$ is the leaf passing through $u$ and $s_u$ is a fixed section of $\mathcal{O}(-1)$ over $\mathcal{L}^u$ such that $h(s_u(u)) = 1$.

Writing $g(t,u) = \int_{\mathcal{L}_u} p^{\mathcal{L}_u}(t,u,v) \log h(s_u(v)) dv$, $\lambda(\nu)$ is thus equal to $\frac{d}{dt}_{|t=0} \int_{\mathbb{P}(E)} g(t,u) \nu(u)$. We want to exchange the derivative and the integral. We claim that for short time $s$, the equality $\frac{d}{dt}_{|t = s} g(t,u) = \mathbb{E}_u^s (\phi)$ holds. This would immediately follows from Dynkin's formula \eqref{Dynkin} if the function $\log h(s_u(v))$ had compact support; this is not true in general but the equality can be proved using estimates on the Brownian motion, as in the proof of Theorem \ref{H1_L1}.

Since by assumption $\phi$ is bounded, one can derive under the integral. It shows that:
\begin{eqnarray*}
 \lambda(\nu) &=& \int_{\mathbb{P}(E)} \frac{d}{dt}_{|t=0} g(t,u) d\nu(u) \\
&=& \int_{\mathbb{P}(E)} \phi(u) d\nu(u).
\end{eqnarray*}
This concludes the proof.
\end{proof}

\paragraph{Criterion of equality} We are primarily interested in the Lyapunov exponent $\lambda$ of $(E,D)$; it is thus necessary to determine a criterion for the equality $\lambda = \lambda(\nu)$. We follow closely \cite[\S 1]{Furm}.

\begin{defn}
 Let $\Gamma$ be a discrete group. A representation $\rho: \Gamma \rightarrow GL(n,\C)$ is \emph{strongly irreducible} if there does not exist a finite union $W = L_1 \cup \dots \cup L_k$ of proper subspaces $L_i \subset \C^n$ such that $\rho.W = W$. Equivalently, we ask that the restriction of $\rho$ to any finite index subgroup is irreducible. 

We will also say that a flat bundle $(E,D)$ is \emph{(strongly) irreducible} if its monodromy is.
\end{defn}

\begin{thm}\label{equal_lambda}
If the flat vector bundle $(E,D)$ is strongly irreducible, then the equality $\lambda = \lambda(\nu)$ of Lyapunov exponents holds, for any harmonic measure $\nu$. 
\end{thm}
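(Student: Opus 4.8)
The plan is to establish the reverse of the inequality already available from \eqref{ineq_cocycle}. Dividing \eqref{ineq_cocycle} by $t$ and letting $t \to +\infty$, and using that $\frac1t \mathbb{E}_{\bar\nu}(\bar H^t) = \lambda(\nu)$ for every $t > 0$ (a consequence of the cocycle relation \eqref{cocycle} together with the $\theta_t$-invariance of $\bar\nu$ from Proposition \ref{inv_erg}, already exploited in the proof of Proposition \ref{form_spat}), one gets $\lambda(\nu) \le \lambda$ for \emph{every} harmonic measure. So it remains to prove $\lambda \le \lambda(\nu)$. The heuristic, following \cite[\S1]{Furm}, is that a single flat direction grows at the maximal rate $\lambda$ unless it is trapped in a lower Oseledets subspace, and that strong irreducibility prevents a harmonic measure from concentrating on such subspaces.

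Two ingredients are needed. First, Oseledets' multiplicative ergodic theorem applied to the parallel-transport cocycle $P_{\gamma,0,t}$ on $\tilde X$ (the same cocycle underlying the definition of $\lambda$): for $\mathbb{P}_x$-almost every path $\gamma$ there is a proper subspace $F(\gamma) \subsetneq E_{\gamma(0)}$ such that $\frac1t \log \|P_{\gamma,0,t} v\| \to \lambda$ for every $v \notin F(\gamma)$ (if $\lambda$ has full multiplicity one takes $F(\gamma) = 0$). Second, the \emph{Furstenberg lemma}: if $(E,D)$ is strongly irreducible, then for $\mu$-almost every $x$ the fiber measure $\nu_x$ assigns zero mass to every proper projective subspace of $\mathbb{P}(E_x)$. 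The classical proof proceeds by the minimal-dimension / maximal-mass argument — if some proper subspace were charged, the finite family of minimal-dimensional subspaces of maximal mass would be invariant under the monodromy, contradicting strong irreducibility — and the task here is to transport this argument from $\rho_\ast$-stationary measures to harmonic measures for the foliated Brownian motion. This can be done either by Furstenberg discretization of the Brownian motion on $\tilde X$, which turns harmonic measures on $\mathbb{P}(\pi^\ast E) = \tilde X \times \mathbb{P}^{n-1}$ into stationary measures for a random walk on $\pi_1(X)$ whose support generates the (strongly irreducible) monodromy, or by a direct harmonicity argument in the spirit of Proposition \ref{mes_image}.

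With these in hand, the two ingredients are combined by a Fubini argument that exploits a feature special to the foliated setting: the law $\mathbb{P}_u$ of the foliated Brownian motion started at $u = (x,[v])$ depends only on $x = p(u)$, so conditionally on the base point the starting direction $[v] \sim \nu_x$ and the path $\gamma \sim \mathbb{P}_x$ are independent. Hence the $\bar\nu$-measure of the set of paths whose starting direction lies in $F(\gamma)$ equals $\int_X \int_{W_x} \nu_x(F(\gamma))\, d\mathbb{P}_x(\gamma)\, d\mu(x)$, which vanishes because, for each fixed $\gamma$, $F(\gamma)$ is a proper subspace of $E_{\gamma(0)}$ and $\nu_{\gamma(0)}$ charges no such subspace. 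Therefore, for $\bar\nu$-almost every path $\tilde\gamma$ the starting direction avoids $F(\gamma)$ and $\frac{\bar H^t(\tilde\gamma)}{t} \to \lambda$. Since $\frac1t \mathbb{E}_{\bar\nu}(\bar H^t) = \lambda(\nu)$ for all $t$, while the Birkhoff ergodic theorem — applied to $\bar H^1 \in L^1$ over $\theta_1$ using $\bar H^n = \sum_{k=0}^{n-1} \bar H^1 \circ \theta_k$ — gives $L^1$ convergence of $\bar H^n/n$ to its almost-everywhere limit $\lambda$, we conclude $\lambda(\nu) = \lambda$ (no appeal to ergodicity of $\nu$ is needed).

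The main obstacle is the second ingredient: establishing the Furstenberg lemma for harmonic measures. In the continuous-time setting over a possibly non-compact base with infinite fundamental group, the clean stationarity identity $\nu = \rho_\ast\mu \ast \nu$ is not directly available, and one must either invoke the (nontrivial) discretization theory to recover it, or promote the maximal-mass configuration of proper subspaces to a leafwise-harmonic object to which the Liouville-type rigidity furnished by Assumption \ref{bound_geom} (as used via \cite{Li_L1} in Proposition \ref{mes_image}) applies; controlling the measurable dependence of these extremal subspaces on $x$ is the delicate point. The remaining steps — the existence of the Oseledets filtration and the final exchange of limit and integral — are routine given Theorem \ref{H1_L1} and the integrability already established.
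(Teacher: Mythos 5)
Your reduction to proving $\lambda \le \lambda(\nu)$ matches the paper (this is exactly Remark \ref{ineq_lambda}), but your mechanism for the reverse inequality is genuinely different from the paper's, and it works. The paper never invokes Oseledets: it draws $n$ independent directions $[u_1],\dots,[u_n]$ from $\nu_{\gamma(0)}$, uses its key lemma to see that they a.s.\ form a basis, and sandwiches $H^t(\gamma)$ between $\bar{H}^t(\tilde{\gamma}_i)$ and $\frac{\log C}{t} + \max_i \bar{H}^t(\tilde{\gamma}_i)$; for ergodic $\nu$, Birkhoff (outer terms) and Kingman (middle term) then give $\lambda = \lambda(\nu)$, and the general harmonic case is handled by the ergodic decomposition (Proposition \ref{convex}) together with affineness of $\nu \mapsto \lambda(\nu)$. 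Your route --- the Oseledets filtration $F(\gamma)$, the observation that under $\bar{\nu}$ the starting direction $[v]\sim\nu_x$ and the base path $\gamma\sim\mathbb{P}_x$ are conditionally independent given $x$ (correct, since $\mathbb{P}_u$ depends only on $p(u)$), the Fubini computation $\int_X\int_{W_x}\nu_x(F(\gamma))\,d\mathbb{P}_x\,d\mu = 0$, and the $L^1$ Birkhoff convergence of $\bar{H}^n/n$ --- is heavier ergodic-theoretic machinery but buys something real: it treats all harmonic measures at once, bypassing Proposition \ref{convex}, which in this non-compact foliated setting the paper justifies only by citation. (Your use of ergodicity of the base shift on $(W(X),\mathbb{P}_\mu)$, rather than of $\bar{\nu}$, to make the a.e.\ limit the constant $\lambda$ is legitimate --- the paper asserts that ergodicity explicitly.) Conversely, the paper's sandwich trick is more elementary: no measurable trivialization of $E$, no continuous-time Oseledets apparatus.

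Both arguments rest on the same pillar, which you correctly isolate as the main obstacle: the Furstenberg-type lemma that the fiber measures $\nu_x$ charge no proper projective subspace. You leave it at sketch level, but your second suggested route is the one the paper actually executes, and its implementation dissolves the measurability worry you flag. The paper fixes a single charged proper subspace $W \subset E_x$ of \emph{minimal dimension}, passes to the covering $X^\Gamma$ associated with the stabilizer $\Gamma \subset \pi_1(X,x)$ of $W$ under parallel transport, and considers $f(y) = \nu_y(W_y)$ on $X^\Gamma$: it is harmonic by the local structure of $\nu$, and integrable because minimality makes distinct monodromy translates of $W$ pairwise $\nu$-essentially disjoint, so the sum over $\pi_1(X)/\Gamma$ is bounded by $1$. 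Theorem 1 of \cite{Li_L1} (Ricci bounded below, i.e.\ Assumption \ref{bound_geom}) forces $f$ to be constant, hence $X^\Gamma$ has finite volume, hence $\Gamma$ has finite index in $\pi_1(X)$, contradicting strong irreducibility. No measurable selection of maximal-mass families of subspaces, and no discretization, is needed. With that lemma supplied, your Fubini step and the $L^1$ Birkhoff conclusion are correct as written, so your proposal constitutes a valid alternative proof.
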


\begin{proof}
Let $\gamma$ be a random path in $W(X)$. Let $[u_1],\dots,[u_n]$ be $n$ random independent points in $\mathbb{P}(E)_{\gamma(0)}$ chosen, with probability $\nu_{\gamma(0)}$. By the following lemma, these points give a projective basis of $\mathbb{P}(E)_{\gamma(0)}$ with probability $1$. We write $u_i$ for a unit vector on the line $[u_i]$ in $E_{\gamma(0)}$. There is a positive constant $C$ such that the inequalities
\begin{equation*}
 h(P_{\gamma,0,t}.u_i) \leq ||P_{\gamma,0,t}|| \leq C.\max_i \big(h (P_{\gamma,0,t}.u_i )\big),
\end{equation*}
hold for any positive time $t$. Taking the logarithm and dividing by $t$ gives the following inequality of cocycles:
\begin{equation*}
 \bar{H}^t(\tilde{\gamma}_i) \leq H^t(\gamma) \leq \frac {\log C}{t} + \max_i \big(\bar{H}^t(\tilde{\gamma}_i)\big),
\end{equation*}
where $\tilde{\gamma}_i$ is the flat lift of $\gamma$ starting at $[u_i]$. If $\nu$ is ergodic, then with probability one, the extreme terms tend to $\lambda(\nu)$ and the middle term to $\lambda$; this concludes the proof in this case. For the general case, one can apply Proposition \ref{convex}, noticing that $\lambda(\nu)$ is an affine function in the set of harmonic measures $\nu$.
\end{proof}

\begin{lem}
 Let $\nu$ be a harmonic measure on $\mathbb{P}(E)$. Let $x$ be a point in $X$ and let $u_1,\dots,u_n$ be $n$ independent random points in $\mathbb{P}(E_x)$, chosen with probability $\nu_x$. Then, with probability one, $(u_1,\dots,u_n)$ is a projective basis of $\mathbb{P}(E_x)$.
\end{lem}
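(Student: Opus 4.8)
The plan is to reduce the statement to the \emph{non-degeneracy} of the conditional measures, namely that $\nu_x(\mathbb{P}(W)) = 0$ for every proper subspace $W \subsetneq E_x$, and then to deduce non-degeneracy from strong irreducibility by a Furstenberg-type maximal-subspace argument adapted to the harmonic setting. First I would show that non-degeneracy suffices, by induction on the number of chosen points. Assume $[u_1],\dots,[u_k]$ are already linearly independent (the case $k=1$ being trivial). For $k<n$ their span $W_k = \langle u_1,\dots,u_k\rangle$ is a proper subspace of $E_x$, so by non-degeneracy $\nu_x(\mathbb{P}(W_k)) = 0$; since $u_{k+1}$ is chosen independently with law $\nu_x$, it avoids $\mathbb{P}(W_k)$ with probability one, so $[u_1],\dots,[u_{k+1}]$ remain independent almost surely. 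After $n$ steps the vectors $u_1,\dots,u_n$ form a basis of $E_x$ almost surely, i.e. a projective basis of $\mathbb{P}(E_x)$.

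Next I would transfer the problem to the universal cover, where (as in the proof of Proposition \ref{Feller}) the foliation on $\mathbb{P}(\pi^\ast E) \cong \tilde{X} \times \mathbb{P}(E_{\tilde{x}_0})$ is a product. The harmonic measure pulls back to a $\pi_1(X)$-invariant measure $\tilde{\nu}$ whose conditional measures $\tilde{\nu}_{\tilde{x}}$ on the fibers satisfy the equivariance $\tilde{\nu}_{\gamma \tilde{x}} = \rho(\gamma)_\ast \tilde{\nu}_{\tilde{x}}$, where $\rho$ denotes the monodromy. Writing the local structure \eqref{local_struct} globally on the cover, $\tilde{\nu}$ disintegrates over a transverse measure $\tilde{\gamma}$ on $\mathbb{P}(E_{\tilde{x}_0})$ with leafwise-harmonic densities $\phi_{[v]}$. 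For a proper subspace $W \subset E_{\tilde{x}_0}$ I would then consider the mass function $F_W(\tilde{x}) := \tilde{\nu}_{\tilde{x}}(\mathbb{P}(W))$. Integrating the leafwise-harmonic densities over $\mathbb{P}(W)$ and using invariance under the leafwise heat semigroup together with Fubini, $F_W$ is a bounded harmonic function on $\tilde{X}$ with $0 \le F_W \le \beta \equiv 1$; the equivariance yields $F_W(\gamma\tilde{x}) = F_{\rho(\gamma)^{-1}W}(\tilde{x})$; and since a nonnegative harmonic function vanishing at one point vanishes identically, each $F_W$ is either identically $0$ or strictly positive everywhere.

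Finally I would run the maximal-subspace argument. Suppose for contradiction that some proper subspace is charged, and let $d$ be the minimal dimension of a proper $W$ with $F_W \not\equiv 0$; by minimality, $F_{W'} \equiv 0$ for every proper subspace of dimension $<d$. Consider $\Psi := \max\{F_W : \dim W = d\}$, a maximum over the compact Grassmannian $\Gr(d,E_{\tilde{x}_0})$ which is attained by upper semicontinuity of $W \mapsto \tilde{\nu}_{\tilde{x}}(\mathbb{P}(W))$. Then $\Psi$ is bounded, subharmonic, and $\pi_1(X)$-invariant, so it descends to $X$; when $X$ is compact the maximum principle forces $\Psi$ to equal the constant $m := \sup \Psi > 0$, and every $W$ with $F_W(\tilde{x}_0) = m$ then satisfies $F_W \equiv m$ since the harmonic function $F_W \le \Psi \equiv m$ attains its maximum interiorly. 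Let $\mathcal{F} = \{W : \dim W = d, \ F_W \equiv m\}$. Minimality of $d$ makes the $\mathbb{P}(W)$, $W \in \mathcal{F}$, pairwise $\tilde{\nu}_{\tilde{x}_0}$-essentially disjoint (their intersections lie in subspaces of dimension $<d$, which are null), so $|\mathcal{F}| \le 1/m$ is finite; and $F_{\rho(\gamma)^{-1}W} = F_W \circ \gamma \equiv m$ shows that $\mathcal{F}$ is $\rho(\pi_1(X))$-invariant. Thus $\bigcup_{W \in \mathcal{F}} \mathbb{P}(W)$ is a finite union of proper subspaces invariant under the monodromy, contradicting strong irreducibility. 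Hence no proper subspace is charged, which is the required non-degeneracy.

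The step I expect to require the most care is the passage from the classical statement for stationary measures to the harmonic setting: verifying rigorously that the mass functions $F_W$ are genuinely harmonic on $\tilde X$, and that the maximal-mass subspaces can be organized into a finite, monodromy-invariant family. The clean use of the maximum principle above relies on $X$ being compact (the only case needed for the applications); in the general finite-volume, bounded-geometry case one would instead invoke an $L^1$- or bounded-subharmonic Liouville theorem in the spirit of \cite{Li_L1} to conclude that $\Psi$ is constant.
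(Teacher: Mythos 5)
Your proof is correct, but it reaches the contradiction by a different mechanism than the paper. You share with the paper the two key ingredients: the harmonicity of the fiber-mass function $\tilde{x} \mapsto \tilde{\nu}_{\tilde{x}}(\mathbb{P}(W))$ (which both arguments extract from the local structure \eqref{local_struct}, with boundedness coming from Proposition \ref{mes_image}), and the use of a charged subspace of \emph{minimal} dimension to make distinct translates essentially disjoint. But you then run Furstenberg's maximal-subspace argument: the upper envelope $\Psi = \max_{\dim W = d} F_W$ over the compact Grassmannian is subharmonic and $\pi_1$-invariant, the strong maximum principle on compact $X$ forces $\Psi \equiv m$, and disjointness bounds the set $\mathcal{F}$ of maximizers by $1/m$, producing a finite monodromy-invariant union of proper subspaces — contradicting strong irreducibility in its ``no invariant finite union'' formulation. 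The paper instead fixes a single minimal charged $W$, passes to the intermediate cover $X^\Gamma$ associated to its stabilizer $\Gamma \subset \pi_1(X,x)$, and uses the disjointness of translates not to bound a family but to show $f(y) = \nu_y(W_y)$ is in $L^1(X^\Gamma)$ with $\int f \leq 1$; Theorem 1 of \cite{Li_L1} (under the Ricci bound of Assumption \ref{bound_geom}) then makes $f$ a positive constant, forcing $X^\Gamma$ to have finite volume, hence $\Gamma$ to have finite index — contradicting strong irreducibility in its ``irreducible on finite-index subgroups'' formulation. What each buys: your route is more self-contained on compact $X$ (maximum principle only, no Liouville theorem, no covering-space bookkeeping), whereas the paper's route works uniformly under the finite-volume, Ricci-bounded-below hypotheses of Section 1, which matters since the lemma is stated and used in that generality; your fallback for that case — an $L^1$-Liouville theorem applied to the nonnegative bounded subharmonic $\Psi$ on the finite-volume $X$ — is sound (Li's theorem does cover nonnegative $L^1$ subharmonic functions), and is arguably slightly simpler than the paper's application on the possibly infinite-volume cover $X^\Gamma$. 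The points you flag as delicate are genuinely the only ones needing care, and they are provable: joint upper semicontinuity of $(W,\tilde{x}) \mapsto F_W(\tilde{x})$ follows from outer regularity together with the fact that $\tilde{x} \mapsto \tilde{\nu}_{\tilde{x}}(U)$ is harmonic (hence continuous) for fixed transverse Borel $U$, which makes $\Psi$ continuous and subharmonic as claimed.
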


\begin{proof}
 If not, there is a proper subspace $W$ of $\mathbb{P}(E_x)$ such that $\nu_x(W) > 0$. One can choose such a $W$ of minimal dimension. Let $\Gamma$ be the subgroup of $\pi_1(X,x)$ of paths stabilizing $W$ by parallel transport and let $X^\Gamma$ be the covering space of $X$ with fundamental group $\pi_1(X)/\Gamma$. On the universal cover $\tilde{X}$, we consider the function $f(y) = \nu_y(W_y)$, where $W_y$ is the parallel transport of (some pullback of) $W$ in $\tilde{X}$. By definition of $\Gamma$, $f$ is invariant under $\Gamma$, hence $f$ is well-defined on $X^\Gamma$. We claim that it is constant.

On the one hand, the local description of a harmonic measure implies that $f$ is harmonic. On the other hand, $f$ is integrable on $X^\Gamma$. Indeed, if we transport $W$ along $\gamma$ and $\gamma'$ in $\pi_1(X,x)$, giving subspaces $W_{\gamma.x}$ and $W_{\gamma'.x}$, then whether these subspaces are equal or their intersection has $\nu_x$-measure zero, since $W$ is of minimal dimension. It follows that
\begin{eqnarray*}
 \int_{X^\Gamma} f(y)dy &=& \int_X \Big(\sum_{\gamma \in \pi_1(X,y)/\Gamma} f(W_{\gamma.y})\Big) dy. \\
&=& \int_X \Big( \bigcup_{\gamma \in \pi_1(X,y)/\Gamma} f(W_{\gamma.y})\Big) dy \\
&\leq& 1
\end{eqnarray*}
By Theorem $1$ in \cite{Li_L1}, $f$ has to be constant. This implies that $X^\Gamma$ is of finite volume; hence $\Gamma$ is of finite index in $\pi_1(X)$, contradicting the assumption of strong irreducibility of the monodromy.
\end{proof}

\begin{rem}\label{ineq_lambda}
 Without the assumption of strong irreducibility, we still have the inequality
$$\lambda(\nu) \leq \lambda$$ 
if $\nu$ is harmonic. This follows from the cocyle relation \eqref{cocycle} and the inequality \eqref{ineq_cocycle}.
\end{rem}

Assembing the results of this section, we get the following theorem:

\begin{thm} \label{thm_int_spat}
 We assume that the flat bundle $(E,D)$ over $X$ is strongly irreducible and that the function $\phi(u) = \Delta_{\mathcal{F}} \log h(s(v))_{|v = u}$ is bounded on $\mathbb{P}(E)$. Let $\mathcal{M}$ be a non-empty closed invariant subset of $\mathbb{P}(E)$. There exists a harmonic measure $\nu$ with support in $\mathcal{M}$ such that the Lyapunov exponent of $(E,D)$ is given by:
$$ \lambda = \frac 12 \int_{\mathbb{P}(E)} \phi(u) d\nu(u).$$
\end{thm}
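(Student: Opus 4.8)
The plan is simply to assemble the three principal results of this section; the statement is a synthesis, so the work reduces to checking that the hypotheses of each ingredient hold for a single common harmonic measure and to keeping track of the normalization constant.

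First I would invoke Remark \ref{supp_harm}. Since $\mathcal{M}$ is a non-empty closed invariant subset of $\mathbb{P}(E)$, that remark (a consequence of the existence Theorem \ref{exist_harm} applied to an initial measure $\nu_0$ supported in $\mathcal{M}$) produces a harmonic probability measure $\nu$ whose support is contained in $\mathcal{M}$. This is the measure I will use throughout, and the point to verify is only that both subsequent results apply to \emph{this particular} $\nu$; both are stated for an arbitrary harmonic measure, so this causes no difficulty.

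Next I would apply Theorem \ref{equal_lambda}. By the standing hypothesis that $(E,D)$ is strongly irreducible, that theorem yields the equality $\lambda = \lambda(\nu)$ between the intrinsic Lyapunov exponent of $(E,D)$ and the Lyapunov exponent attached to the harmonic measure $\nu$ chosen above. Finally I would invoke Proposition \ref{form_spat}, whose sole hypothesis is the boundedness of the foliated Laplacian function, which is precisely our assumption. It converts $\lambda(\nu)$ into the spatial integral $\int_{\mathbb{P}(E)} \psi\, d\nu$, where $\psi(u)=\tfrac12\,\Delta_{\mathcal{F}}\log h(s(v))_{|v=u}$. Chaining the two equalities gives
\[
 \lambda \;=\; \lambda(\nu) \;=\; \int_{\mathbb{P}(E)} \psi\, d\nu,
\]
which is the desired formula once the normalizations are reconciled.

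There is no genuine obstacle beyond bookkeeping. The one point requiring a moment of care is the factor of $\tfrac12$: the function $\phi$ in the present statement is defined \emph{without} the factor $\tfrac12$ that appears in the $\psi$ of Proposition \ref{form_spat}, so $\psi=\tfrac12\phi$ and the integral above equals $\tfrac12\int_{\mathbb{P}(E)}\phi\,d\nu$, matching the claimed expression exactly. One may also note that although the integral is written over all of $\mathbb{P}(E)$, the measure $\nu$ is carried by $\mathcal{M}$, so it is really computed over $\mathcal{M}$; this is consistent and needs no further comment.
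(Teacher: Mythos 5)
Your proof is correct and is precisely the synthesis the paper intends: the theorem is introduced there with the words ``Assembling the results of this section,'' and your chain --- Remark \ref{supp_harm} to produce a harmonic measure supported in $\mathcal{M}$, Theorem \ref{equal_lambda} to get $\lambda=\lambda(\nu)$ from strong irreducibility, and Proposition \ref{form_spat} to convert $\lambda(\nu)$ into the spatial integral --- is exactly that assembly. Your handling of the normalization is also right: the $\phi$ in the theorem statement omits the factor $\tfrac12$ built into the $\phi$ of Proposition \ref{form_spat}, which accounts for the $\tfrac12$ in the final formula.
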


\section{Geometric interpretation over a K\"ahler manifold}

We now assume that $X$ is a complex manifold and that the Riemannian metric $g$ is associated to a K\"ahler form $\omega$. In this section, we relate the Lyapunov spectrum to holomorphic invariants of $X$ and $E$. More precisely we show -- if $X$ is compact -- that the sum $\lambda_1 + \dots + \lambda_k$ is greater than the degree of any holomorphic subbundle of $E$ of co-rank $k$ and we discuss the equality case.

\subsection{Harmonic current}

The Laplacian $\Delta_X$ is related to the $\d \bar{\d}$ operator by the identity
\begin{equation}\label{lapl_kahl}
(\Delta_X f) \omega^d = 2d \sqrt{-1} \cdot \d \bar{\d} f \wedge \omega^{d-1},
\end{equation}
as follows from the K\"ahler identities. We recall that the volume form on $X$ is given by $\frac{\omega^d}{d!}$, where $d$ is now the complex dimension of $X$. We simply write $\omega$ for the pullback of $\omega$ on $\mathbb{P}(E)$.\\

In this setting, we associate a $(1,1)$-current to any probability measure on $\mathbb{P}(E)$.

\begin{defn}
 Let $\nu$ be a probability measure on $\mathbb{P}(E)$. Let $\alpha$ be a $(1,1)$-form with compact support on $\mathbb{P}(E)$. There exists a unique smooth function $f_\alpha$ with compact support such that the identity 
$$ \alpha \wedge \omega^{d-1} = f_\alpha \omega^d$$
holds in any leaf of $\mathbb{P}(E)$. The current $T_\nu$ is defined by
\begin{equation*}
 T_\nu(\alpha) := d\pi \int_{\mathbb{P}(E)} f_\alpha d\nu.
\end{equation*}
\end{defn}

\begin{prop}
 If $\nu$ is a harmonic measure, then $T_\nu$ is a \emph{pluriharmonic current}, i.e. it satisfies
\begin{equation*} 
 T_\nu(\d_{\mathcal{F}} \bar{\d}_{\mathcal{F}} f) = 0,
\end{equation*}
for any smooth function $f$ with compact support. Here $\d_{\mathcal{F}}$ and $\bar{\d}_{\mathcal{F}}$ are the usual differential operators $\d$ and $\bar{\d}$, in the leaves of $\mathbb{P}(E)$.
\end{prop}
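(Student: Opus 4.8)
The plan is to reduce the pluriharmonicity condition directly to the defining property of harmonic measures recorded in Remark \ref{rem_mes_harm}, namely that $\nu(\Delta_{\mathcal{F}} f) = 0$ for every compactly supported $f$ that is smooth along the leaves. The bridge between the current $T_\nu$ (a metric/geometric object) and the foliated Laplacian (a measure-theoretic object) is the leafwise version of the K\"ahler identity \eqref{lapl_kahl}.

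First I would observe that each leaf $\mathcal{L}$ of $\mathbb{P}(E)$ is, via the covering-type map $p$, locally biholomorphic to an open subset of $X$ and carries the restricted form $\omega|_{\mathcal{L}}$; since $\omega$ is K\"ahler on $X$, each leaf is a K\"ahler manifold of complex dimension $d$ whose Laplace operator is precisely the foliated Laplacian $\Delta_{\mathcal{F}}$. Consequently the identity \eqref{lapl_kahl} holds verbatim inside every leaf, that is
\[
 (\Delta_{\mathcal{F}} f)\, \omega^d = 2d \sqrt{-1}\cdot \d_{\mathcal{F}} \bar{\d}_{\mathcal{F}} f \wedge \omega^{d-1}.
\]
Applying the definition of $T_\nu$ to the compactly supported $(1,1)$-form $\alpha = \d_{\mathcal{F}} \bar{\d}_{\mathcal{F}} f$, the associated function $f_\alpha$ determined leafwise by $\alpha \wedge \omega^{d-1} = f_\alpha\, \omega^d$ is therefore $f_\alpha = \tfrac{1}{2d\sqrt{-1}}\,\Delta_{\mathcal{F}} f$, which is smooth and compactly supported because $f$ is.

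It then remains to substitute into the defining formula and invoke harmonicity. Explicitly,
\[
 T_\nu(\d_{\mathcal{F}} \bar{\d}_{\mathcal{F}} f) = d\pi \int_{\mathbb{P}(E)} f_\alpha \, d\nu = \frac{\pi}{2\sqrt{-1}} \int_{\mathbb{P}(E)} \Delta_{\mathcal{F}} f \, d\nu,
\]
and since $f$ is compactly supported and smooth, hence smooth in the leaf directions, Remark \ref{rem_mes_harm} gives $\int_{\mathbb{P}(E)} \Delta_{\mathcal{F}} f \, d\nu = \nu(\Delta_{\mathcal{F}} f) = 0$ for the harmonic measure $\nu$. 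This yields $T_\nu(\d_{\mathcal{F}} \bar{\d}_{\mathcal{F}} f) = 0$, as claimed.

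I do not expect a serious obstacle here: the argument is essentially a bookkeeping of constants. The only points needing care are (i) checking that the leafwise K\"ahler structure genuinely makes \eqref{lapl_kahl} valid leaf by leaf, so that $f_\alpha$ is a constant multiple of $\Delta_{\mathcal{F}} f$, and (ii) confirming that the regularity of $f_\alpha$ matches the hypotheses of Remark \ref{rem_mes_harm}, which is immediate since a smooth compactly supported $f$ produces a smooth compactly supported $\Delta_{\mathcal{F}} f$. Thus the substance of the statement is exactly the identification of the pluriharmonicity of $T_\nu$ with the vanishing $\nu(\Delta_{\mathcal{F}} f) = 0$ that characterizes harmonic measures.
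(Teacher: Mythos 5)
Your proof is correct and takes essentially the same route as the paper: both apply the leafwise K\"ahler identity \eqref{lapl_kahl} to identify $f_\alpha$ with $\tfrac{1}{2d\sqrt{-1}}\Delta_{\mathcal{F}} f$, deduce $T_\nu(\d_{\mathcal{F}} \bar{\d}_{\mathcal{F}} f) = \frac{\pi}{2\sqrt{-1}} \int_{\mathbb{P}(E)} (\Delta_{\mathcal{F}} f)\, d\nu$, and conclude by the vanishing $\nu(\Delta_{\mathcal{F}} f) = 0$ from Remark \ref{rem_mes_harm}. Your two explicit verifications (that \eqref{lapl_kahl} holds leaf by leaf and that $f_\alpha$ has the regularity required) are simply spelled-out versions of steps the paper leaves implicit.
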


\begin{proof}
 By definition of $T_\nu$ and equation \eqref{lapl_kahl}, 
$$T_\nu(\d_{\mathcal{F}} \bar{\d}_{\mathcal{F}} f) =  \frac{\pi}{2\sqrt{-1}} \int_{\mathbb{P}(E)} (\Delta_{\mathcal{F}} f) d\nu.$$ 
This vanishes if $\nu$ is harmonic by Remark \ref{rem_mes_harm}.
\end{proof}

\begin{rem}\label{rem_cohomo}
If $\nu$ is harmonic and $X$ is compact, $T_\nu$ vanishes on $\d \bar{\d}$-exact forms. Thus, $T_\nu$ defines a linear form on the Bott-Chern cohomology group $H^{1,1}_{BC}(\mathbb{P}(E),\R)$ which is equal to the de Rham cohomology group -- since $\mathbb{P}(E)$ is a compact K\"ahler manifold. It is useful to think of $T_\nu$ as a homology $(1,1)$-class.

 It is also natural to consider the $(n,n)$-current $S_\nu$, defined by 
$$S_\nu(\alpha) = \int_{\mathbb{P}(E)} f_\alpha d\nu,$$
for a $(d,d)$-form $\alpha$ whose restriction to the leaves of $\mathbb{P}(E)$ is equal to $f_\alpha \omega^n$. However, the harmonicity of $\nu$ does not imply the pluriharmonicity of $S_{\nu}$: if $\beta$ is a $(n-1,n-1)$ form, then $S_\nu(\d_{\mathcal{F}} \bar{\d}_{\mathcal{F}})$ does not vanish in general. An example arises for instance by considering a (cocompact) torsion free lattice $\Gamma$ in $PSU(1,n)$, acting holomorphically on the complex $n$-dimensional unit ball $B^n \subset \mathbb P^n $, preserving the complex hyperbolic metric (see e.g. \cite{Parker}). The complex hyperbolic compact manifold $X = \Gamma \backslash B^n $ carries a natural (projective) flat bundle $E\rightarrow X$ of rank $n+1$ whose monodromy is given by the identification of its fundamental group with $\Gamma \subset PSU(1,n) \subset PGL(n+1,\C)$. 

For any $z\in B^n$, let $\lambda_z$ be the harmonic measure on $\partial B^n$ issued from the point $z$ (the distribution of the limit in $\partial B^n$ of a Brownian trajectory starting at $z$). By homogeneity, $\lambda_z$ is a smooth measure on $\partial B^n$: the only probability measure invariant by the stabilizer of $z$ in $PSU(1,n)$. These harmonic measures are related to one another by the Poisson kernel whose expression is 
$$ P(z,u)= \frac{\big( 1 -|z|^2 \big)^n}{|1 - z\cdot \overline{u} |^{2n} },$$
see e.g. \cite{Koranyi}, page 508 (here $z\cdot u = \sum _{k=1}^{n} z_k u_k$ and $|z|^2 = z \cdot \overline{z}$). More precisely, one has 
$$  \lambda _z = P(z,\cdot) \lambda _0 .$$ 

Let $\tilde{\nu}$ be the Radon measure on $B^n \times \mathbb P ^n$ defined by 
$$  \tilde{\nu} = P (z, u) \text{vol}_g(dz) \otimes \lambda_0(du) $$
where $g$ is the complex hyperbolic metric, and $\text{vol}_g$ its volume. It is harmonic since the Poisson kernel is harmonic in the $z$ variable (in fact this is the only harmonic measure here). It is invariant by the diagonal action of $\Gamma$ on $B\times \mathbb P^n$, hence defines a finite measure on the quotient $ \mathbb P(E) = \Gamma \backslash (B\times \mathbb P^n)$. The associated current $S_\nu$ is then the quotient of the current  
$$ S_{\tilde{\nu}} (\tilde{\alpha}  ) = \int_{\partial B} \Big( \int_{B\times u } P(\cdot, u ) \tilde{\alpha} \Big) d\lambda (u) .$$ 
The Poisson kernel, while harmonic in the variable $z$ with respect to the complex hyperbolic metric, is not pluriharmonic when $n>1$. Hence, the current $S_{\nu}$ is not pluriharmonic. 
\end{rem}

We will use the following notion of degree of holomorphic bundles on non-compact manifolds.

\begin{defn}\label{def_admissible}
 A metric $h$ on a holomorphic bundle $\mathcal{E}$ is \emph{admissible} if the curvature $R^{\mathcal{E}}(h)$ of the Chern connection is bounded, with respect to the K\"ahler metric on $X$ and the metric induced by $h$ on $\End(\mathcal{E})$.
\end{defn}

The Chern form $c_1(\mathcal{E},h)$ is defined as usual by $\frac {\sqrt{-1}}{2\pi} \Tr R^{\mathcal{E}}(h)$.

\begin{defn}\label{def_degree}
 Let $h$ be an admissible metric on $\mathcal{E}$. Then the \emph{analytic degree} of $(\mathcal{E},h)$ is
$$ \deg(\mathcal{E},h) := \frac{d}{\int_X \omega^d} \int_X c_1(\mathcal{E},h) \wedge \omega^{d-1}.$$
\end{defn}

This is well-defined since the volume of $X$ is finite.

\begin{lem}\label{phi_is_bounded}
 If $h$ is an admissible metric on the flat bundle $(E,D)$ and if Assumption \ref{bound_higgs} is satisfied, then the function $\phi$ of Proposition \ref{form_spat} is bounded.
\end{lem}

\begin{proof}
 Let $(x,[v])$ be a point in $\mathbb{P}(E)$. By parallel transport on $X$, $(x,[v])$ defines a flat line subbundle $L_{[v]}$ of $E$ on a neighborhood of $x$ in $X$. The function $\phi$ satisfies
$$ \phi(x,[v]) \omega^d = 2d \sqrt{-1} \cdot \d \bar{\d} \log h(s(y))_{|y = x} \omega^{d-1},$$
where $s$ is a flat section of $L_{[v]}$ in a neighborhood of $x$. Hence, it is sufficient to prove that the Chern curvature of $L_{[v]}$ is bounded, uniformly in $x$ and $[v]$.

If $R^L(h)$ and $R^E(h)$ are the Chern curvatures of $L$ and $E$, it is well known that the equation
\begin{equation}\label{eqn_curv}
 R^L(h) = R^E(h)_{|L} - A \wedge A^\ast
\end{equation}
holds. Here, the restriction is taken with respect to the orthogonal decomposition $E = L_{[v]} \oplus L_{[v]}^\perp$ and $A \in \mathcal{C}^\infty\big(\Lambda^{1,0} \Hom(L_{[v]},L_{[v]}^\perp)\big)$ is such that $As$ is the orthogonal projection of $\nabla^{Ch,E} s$ on $L_{[v]}^\perp$, for a smooth section $s$ of $L_{[v]}$.

We claim that $A = -2 pr_{L_{[v]}^\perp} \circ \alpha^{1,0}$, where $pr_{L_{[v]}^\perp}$ is the orthogonal projection from $E$ to ${L_{[v]}^\perp}$ and $\alpha^{1,0}$ is the $(1,0)$-part of $\alpha$, defined before Assumption \ref{bound_higgs}. Indeed, the Chern connection on $E$ is given by
$$ \nabla^{Ch,E} = D - 2\alpha^{1,0},$$
as can be easily checked. Computing $A$ with a flat section $s$ of $L_{[v]}$ gives the claim.

Hence, both terms on the right hand side of equation \eqref{eqn_curv} are bounded if $h$ is admissible and Assumption \ref{bound_higgs} is satisfied. This concludes the proof.
\end{proof}

Let $c_1(\mathcal{O}(1),h)$ be the Chern form of the anti-tautological line bundle over $\mathbb{P}(E)$, with respect to the metric $h$. We can reinterpret Theorem \ref{form_spat} as follows. 

\begin{prop}\label{Lyap_courant}
Let $\nu$ be a probability measure on $\mathbb{P}(E)$. If the metric $h$ on $E$ is admissible, then
\begin{equation}
\lambda(\nu) = T_\nu(c_1(\mathcal{O}(1)),h).
\end{equation}
\end{prop}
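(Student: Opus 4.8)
The plan is to combine the spatial formula of Proposition \ref{form_spat} with the definition of the current $T_\nu$, thereby reducing the statement to a pointwise identity between the integrand $\phi$ and the leafwise Chern form of $\mathcal{O}(1)$. First, since $h$ is admissible, Lemma \ref{phi_is_bounded} guarantees that $\phi$ is bounded, so Proposition \ref{form_spat} applies and gives $\lambda(\nu) = \int_{\mathbb{P}(E)}\phi\,d\nu$, where $\phi(u) = \frac{1}{2}\Delta_{\mathcal{F}}\log h(s(v))_{|v=u}$ for a local flat section $s$ of $\mathcal{O}(-1)$. On the other hand, by the very definition of $T_\nu$ one has $T_\nu(c_1(\mathcal{O}(1),h)) = d\pi\int_{\mathbb{P}(E)} f_{c_1}\,d\nu$, where $f_{c_1}$ is determined leafwise by $c_1(\mathcal{O}(1),h)\wedge\omega^{d-1} = f_{c_1}\,\omega^d$. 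It thus suffices to prove the pointwise equality $\phi = d\pi\, f_{c_1}$.

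The key observation is that the flat structure used to define $\phi$ and the holomorphic structure used to define the Chern form are compatible. Fix $u = (x,[v])$ and let $\mathcal{L}$ be the leaf through $u$; it is a holomorphic submanifold of $\mathbb{P}(E)$, being the graph of the flat (hence holomorphic) section determined by $[v]$, and $\mathcal{O}(-1)|_{\mathcal{L}}$ is precisely the flat line subbundle $L_{[v]}$ of Lemma \ref{phi_is_bounded}. Because the holomorphic structure on $E$ is $\bar{\d}_E = D^{0,1}$, any $D$-flat section $s$ satisfies $\bar{\d}_E s = 0$ and is therefore a local holomorphic frame of $\mathcal{O}(-1)|_{\mathcal{L}}$. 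Consequently the Chern curvature of this line bundle may be computed from $s$ by the usual formula $R^{\mathcal{O}(-1)} = -\d_{\mathcal{F}}\bar{\d}_{\mathcal{F}}\log\|s\|^2 = -2\,\d_{\mathcal{F}}\bar{\d}_{\mathcal{F}}\log h(s)$, whence, on the leaf, $c_1(\mathcal{O}(1),h) = -c_1(\mathcal{O}(-1),h) = \frac{\sqrt{-1}}{\pi}\d_{\mathcal{F}}\bar{\d}_{\mathcal{F}}\log h(s)$.

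It remains to match the two expressions using the Kähler identity \eqref{lapl_kahl} applied leafwise. From $\phi = \frac{1}{2}\Delta_{\mathcal{F}}\log h(s)$ and \eqref{lapl_kahl} one gets $\phi\,\omega^d = d\sqrt{-1}\,\d_{\mathcal{F}}\bar{\d}_{\mathcal{F}}\log h(s)\wedge\omega^{d-1}$, while the previous paragraph gives $f_{c_1}\,\omega^d = \frac{\sqrt{-1}}{\pi}\d_{\mathcal{F}}\bar{\d}_{\mathcal{F}}\log h(s)\wedge\omega^{d-1}$. Comparing these two identities yields $\phi = d\pi\, f_{c_1}$, and integrating against $\nu$ gives $\lambda(\nu) = T_\nu(c_1(\mathcal{O}(1),h))$. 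The only non-formal step — and the point I would verify most carefully — is the identification $\mathcal{O}(-1)|_{\mathcal{L}} = L_{[v]}$ together with the holomorphicity of the flat section $s$; once this is in place, the normalizing constant $d\pi$ and the sign are forced to agree with those built into the definition of $T_\nu$, and everything closes up.
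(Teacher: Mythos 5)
Your proof is correct and takes essentially the same route as the paper: both arguments rest on the leafwise identity $c_1(\mathcal{O}(1),h) = \frac{\sqrt{-1}}{\pi}\, \d_{\mathcal{F}} \bar{\d}_{\mathcal{F}} \log h(s)$ for a flat section $s$ of $\mathcal{O}(-1)$, combined with Lemma \ref{phi_is_bounded}, the spatial formula of Proposition \ref{form_spat}, and the K\"ahler identity \eqref{lapl_kahl}, with matching normalizations ($\phi = d\pi f_{c_1}$). The only difference is that you spell out the justification the paper leaves implicit -- the identification $\mathcal{O}(-1)|_{\mathcal{L}} \cong L_{[v]}$ and the holomorphicity of $D$-flat sections via $\bar{\d}_E = D^{0,1}$ -- which is a correct and worthwhile clarification rather than a change of method.
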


\begin{proof}
 Let $u$ be a point in $\mathbb{P}(E)$. On a neighborhood of the leaf $\mathcal{L}_u$, we consider a flat section $s$ of $\mathcal{O}(-1)$. The Chern form $c_1(\mathcal{O}(1),h)$ is given by
$$ c_1(\mathcal{O}(1),h) = \frac {\sqrt{-1}}\pi \d_{\mathcal{F}} \bar{\d}_{\mathcal{F}} \log h(s),$$
in the directions tangent to $\mathcal{L}^u$. By Lemma \ref{phi_is_bounded}, the function $\phi(u) = \Delta_{\mathcal{F}} \log h(s(x))_{|x=u}$ is bounded. Theorem \ref{thm_int_spat} and equation \eqref{lapl_kahl} give:
\begin{eqnarray*}
 \lambda(\nu) &=& \frac 12 \int_{\mathbb{P}(E)} \Delta_{\mathcal{F}} \log h(s(v))_{|v = u} \nu(u) \\
&=& \frac 12 \cdot \frac{2d\sqrt{-1}}{d\pi} \cdot  T_\nu(\d_{\mathcal{F}} \bar{\d}_{\mathcal{F}} \log h(s)) \\
&=& - \frac{\sqrt{-1}}{\pi} \cdot \pi \sqrt{-1} \cdot T_\nu(c_1(\mathcal{O}(1),h)).
\end{eqnarray*}
This concludes the proof.
\end{proof}

\subsection{Relation with holomorphic subbundles}

From now on, we restrict to the case where $X$ is a compact K\"ahler manifold; see Subsection \ref{non_comp} for a discussion on the non-compact case.

Let $F$ be a holomorphic subbundle of $E$ of co-rank $1$. The general case will be treated in Subsection \ref{subsec_codim}. Over $\mathbb{P}(E)$, we consider the following three line bundles:
\begin{itemize}
 \item the anti-tautological line bundle $\mathcal{O}(1)$;
 \item the line bundle $\mathcal{O}([\mathbb{P}(F)])$ associated to the divisor $\mathbb{P}(F)$;
 \item the pullback of the line bundle $E/F$ over $X$.
\end{itemize}

\begin{lem}\label{isom_line}
 The line bundle $\mathcal{O}([\mathbb{P}(F)])$ is isomorphic to $\mathcal{O}(1) \otimes E/F$.
\end{lem}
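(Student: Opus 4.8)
The plan is to exhibit an explicit holomorphic section of $\mathcal{O}(1) \otimes p^\ast(E/F)$ whose divisor of zeros is exactly $\mathbb{P}(F)$, and then to invoke the canonical correspondence between effective divisors and line bundles equipped with a section. Here the co-rank $1$ hypothesis is essential for the statement to make sense: it guarantees simultaneously that $E/F$ is a line bundle on $X$ and that $\mathbb{P}(F)$ is a hypersurface (a divisor) in $\mathbb{P}(E)$, since its fibers $\mathbb{P}(F_x) \cong \mathbb{P}^{n-2}$ are hyperplanes in the fibers $\mathbb{P}(E_x) \cong \mathbb{P}^{n-1}$ of $p$.

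First I would recall the tautological inclusion $\iota \colon \mathcal{O}(-1) \hookrightarrow p^\ast E$, which over a point $([v],x) \in \mathbb{P}(E)$, with $[v] \in \mathbb{P}(E_x)$ the line $\C v$, is simply the inclusion $\C v \hookrightarrow E_x$. Let $q \colon E \to E/F$ be the holomorphic quotient map. Composing the pullback $p^\ast q$ with $\iota$ yields a holomorphic bundle morphism $p^\ast q \circ \iota \colon \mathcal{O}(-1) \to p^\ast(E/F)$, which is the same datum as a holomorphic section $s$ of $\Hom(\mathcal{O}(-1), p^\ast(E/F)) = \mathcal{O}(-1)^\ast \otimes p^\ast(E/F) = \mathcal{O}(1) \otimes p^\ast(E/F)$. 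Explicitly, $s([v],x) = q_x(v)$, where $v$ is read as a generator of the fiber $\mathcal{O}(-1)_{([v],x)}$.

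Next I would identify the zero set of $s$: the value $q_x(v)$ vanishes if and only if $v \in F_x$, that is, if and only if $[v] \in \mathbb{P}(F_x)$, so the support of $\{s = 0\}$ is precisely $\mathbb{P}(F)$. The one point that genuinely needs checking is that $s$ vanishes to order exactly $1$, so that $\operatorname{div}(s) = \mathbb{P}(F)$ with multiplicity one rather than some higher multiple. This is a fiberwise verification: restricting to a fiber $\mathbb{P}(E_x) \cong \mathbb{P}^{n-1}$, the linear map $q_x \colon E_x \to (E/F)_x$ is surjective onto a line, hence (after trivializing that line) $s$ restricts to a nonzero linear form, i.e. a degree-$1$ section of $\mathcal{O}(1)$ on $\mathbb{P}(E_x)$; such a section vanishes transversally along the hyperplane $\mathbb{P}(F_x)$. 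Since this holds in every fiber, $s$ is a transverse section and $\operatorname{div}(s) = \mathbb{P}(F)$.

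Finally, any holomorphic line bundle $L$ admitting a holomorphic section whose divisor of zeros is an effective divisor $D$ is canonically isomorphic to the line bundle $\mathcal{O}(D)$ attached to $D$. Applying this with $L = \mathcal{O}(1) \otimes p^\ast(E/F)$ and $D = \mathbb{P}(F)$ gives $\mathcal{O}([\mathbb{P}(F)]) \cong \mathcal{O}(1) \otimes p^\ast(E/F)$, as claimed (with the usual abuse of writing $E/F$ for its pullback $p^\ast(E/F)$). Everything apart from the transversality check in the previous paragraph is formal, so I expect that order-$1$ vanishing to be the only substantive step.
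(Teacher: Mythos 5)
Your proof is correct and is essentially the paper's own argument: your section $p^\ast q \circ \iota$ of $\Hom(\mathcal{O}(-1), p^\ast(E/F)) \cong \mathcal{O}(1) \otimes p^\ast(E/F)$ coincides with the section $s \otimes u$ that the paper constructs via a local frame $s$ of $\mathcal{O}(1)$ and its dual $s^\ast$, and both proofs conclude by showing this section vanishes along $\mathbb{P}(F)$ to order exactly one and invoking the divisor--line bundle correspondence. The only difference is presentational: you verify multiplicity one by fiberwise transversality of a nonzero linear form on $\mathbb{P}(E_x)$, while the paper checks the same fact in explicit local coordinates $(x,[v])$ with $u(x,[v]) = (1, v_2/v_1, \dots, v_n/v_1) \bmod F$.
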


\begin{proof}
Let $s$ be a local non-vanishing section of $\mathcal{O}(1)$. There is a unique local section $s^\ast$ of $\mathcal{O}(-1)$ such that $s(s^\ast) = 1$. Outside $\mathbb{P}(F)$, $\mathcal{O}(-1)$ is isomorphic to $E/F$ and $s^\ast$ can be thought as a section $u$ of $E/F$. The section $s \otimes u$ of $\mathcal{O}(1) \otimes E/F$ is well-defined outside $\mathbb{P}(F)$.

Locally, $X$ is an open set in $\C^d$ with coordinates $(x_1,\dots,x_d)$, $E$ is trivial with coordinates $(u_1,\dots,u_n)$ and $F$ is given by $u_n = 0$. Let $(x,[v])$ be the coordinates in a neighborhood of a point in $\mathbb{P}(F)$. We can assume that $v_1$ is non zero in this neighborhood. Then, if $w$ is a local section of $\mathcal{O}(-1)$, $s(x,[v])(w) = w_1$ and $u(x,[v]) = (1,v_2/v_1,\dots,v_n/v_1) \mod F$ can be taken for the local sections of $\mathcal{O}(1)$ and $E/F$. We see that $s \otimes u$ vanishes on $\mathbb{P}(F)$ with order $1$.
\end{proof}

\begin{defn}
 The \emph{dynamical degree} of $F$, with respect to a probability measure $\nu$ on $\mathbb{P}(E)$ is the quantity
$$ \delta_{F,\nu} := T_\nu\big(c_1(\mathcal{O}([\mathbb{P}(F)]))\big).$$
\end{defn}

\begin{thm}\label{fund_form}
Let $\nu$ be a probability measure on $\mathbb{P}(E)$. The formula
$$\lambda(\nu) = \delta_{F,\nu} + \pi \cdot \deg(F) $$
holds.
\end{thm}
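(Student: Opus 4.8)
The plan is to reduce the identity to a computation of the current $T_\nu$ on the pullback of a form from $X$, and then to recognize that quantity as a multiple of an analytic degree. First I would invoke Proposition \ref{Lyap_courant} to rewrite the left-hand side as $\lambda(\nu) = T_\nu(c_1(\mathcal{O}(1),h))$. The isomorphism of Lemma \ref{isom_line}, namely $\mathcal{O}([\mathbb{P}(F)]) \cong \mathcal{O}(1) \otimes p^\ast(E/F)$, together with the additivity of Chern forms under tensor products of line bundles (endowing $\mathcal{O}([\mathbb{P}(F)])$ with the smooth metric induced by $h$ through this isomorphism), gives the pointwise identity of smooth $(1,1)$-forms
\[ c_1(\mathcal{O}([\mathbb{P}(F)]),h) = c_1(\mathcal{O}(1),h) + c_1(p^\ast(E/F),h). \]
Applying the linear functional $T_\nu$ and unravelling the definition of $\delta_{F,\nu}$ yields $\delta_{F,\nu} = \lambda(\nu) + T_\nu(c_1(p^\ast(E/F),h))$, so everything reduces to showing that $T_\nu(c_1(p^\ast(E/F),h)) = -\pi\deg(F)$.

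The key step is to evaluate $T_\nu$ on the pullback $p^\ast\beta$ of an arbitrary smooth $(1,1)$-form $\beta$ on $X$. Since the projection $p$ restricts to a local diffeomorphism on each leaf and $\omega$ denotes the pullback $p^\ast\omega$, the leafwise equation $p^\ast\beta \wedge \omega^{d-1} = f_{p^\ast\beta}\,\omega^d$ is exactly the $p$-pullback of the corresponding equation $\beta \wedge \omega^{d-1} = g_\beta\,\omega^d$ on $X$; hence $f_{p^\ast\beta} = g_\beta \circ p$. Substituting this into the definition $T_\nu(\alpha) = d\pi \int_{\mathbb{P}(E)} f_\alpha\, d\nu$ and using that $p_\ast\nu = \mu$ (Proposition \ref{mes_image}) gives
\[ T_\nu(p^\ast\beta) = d\pi \int_X g_\beta\, d\mu = \frac{d\pi}{\int_X \omega^d} \int_X \beta \wedge \omega^{d-1}, \]
where the last equality uses $d\mu = \frac{1}{\vol(X)}\frac{\omega^d}{d!}$ and $\vol(X)\cdot d! = \int_X \omega^d$. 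Comparing with Definition \ref{def_degree}, this says precisely $T_\nu(p^\ast\beta) = \pi \cdot \frac{d}{\int_X\omega^d}\int_X \beta\wedge\omega^{d-1}$; taking $\beta = c_1(E/F,h)$ we obtain $T_\nu(c_1(p^\ast(E/F),h)) = \pi\deg(E/F)$. The normalizing factor $d\pi$ built into the definition of $T_\nu$ is exactly what makes this come out to the analytic degree.

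To finish, I would use flatness of $E$ to conclude $\deg(E) = 0$: on the compact Kähler manifold $X$ the integral $\int_X c_1(E,h)\wedge\omega^{d-1}$ depends only on the de Rham class $c_1(E)_\R$ (as $\omega^{d-1}$ is closed), and this class vanishes since a bundle admitting a flat connection has vanishing real Chern classes. From the exact sequence $0 \to F \to E \to E/F \to 0$ one gets $\deg(E) = \deg(F) + \deg(E/F)$, whence $\deg(E/F) = -\deg(F)$. Combining with the previous paragraph, $T_\nu(c_1(p^\ast(E/F),h)) = -\pi\deg(F)$, and substituting into $\delta_{F,\nu} = \lambda(\nu) + T_\nu(c_1(p^\ast(E/F),h))$ gives $\lambda(\nu) = \delta_{F,\nu} + \pi\deg(F)$, as claimed. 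The step I expect to require the most care is the leafwise identity $f_{p^\ast\beta} = g_\beta\circ p$ combined with the push-forward $p_\ast\nu=\mu$: one must check that the disintegration of $\nu$ and the fibration interact correctly so that integrating a $p$-pulled-back quantity against $\nu$ collapses to integration against $\mu$ on the base, while carefully tracking the constants relating $\omega^d$, the Riemannian volume, and $\vol(X)$.
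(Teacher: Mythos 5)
Your proposal is correct and follows essentially the same route as the paper: rewrite $\lambda(\nu)=T_\nu(c_1(\mathcal{O}(1),h))$ via Proposition \ref{Lyap_courant}, split the Chern form using Lemma \ref{isom_line}, and evaluate $T_\nu$ on the pullback $c_1(E/F)$ to recognize $\pi\deg(E/F)$. You merely make explicit two steps the paper compresses --- the identity $f_{p^\ast\beta}=g_\beta\circ p$ together with $p_\ast\nu=\mu$ (which, as you implicitly note by citing Proposition \ref{mes_image}, really requires $\nu$ harmonic, a hypothesis the paper's statement also tacitly assumes), and the deduction $\deg(E/F)=-\deg(F)$ from flatness of $E$ --- which is a faithful, slightly more careful rendering of the paper's own argument.
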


\begin{proof}
 By Lemma \ref{isom_line}, the following equality of Chern forms holds:
$$ c_1(\mathcal{O}(1)) = c_1(\mathcal{O}([\mathbb{P}(F)])) - c_1(E/F).$$
By Proposition \ref{Lyap_courant}, $T_\nu\big(c_1(\mathcal{O}(1))\big)$ is equal to $\lambda(\nu)$. We claim that $T_\nu(c_1(E/F))$ equals $-\pi \cdot \deg(F)$.

Indeed, $T_\nu(c_1(E/F))$ is by definition equal to $d\pi \int_{\mathbb{P}(E)} f d\nu$, where $f$ satisfies $f \omega^d = c_1(E/F) \wedge \omega^{d-1}$ on the leaves of $\mathbb{P}(E)$. Since both $\omega$ and $c_1(E/F)$ come from $X$, $f$ is constant in the fibers of $p: \mathbb{P}(E) \rightarrow X$ and
$$ T_\nu(c_1(E/F)) = \frac{d\pi}{\int_X \frac{\omega^d}{d!}} \int_X f \frac{\omega^d}{d!}.$$
The claim then follows from the definition we gave for the degree.

It follows that $\delta_{F,\nu}$ satisfies the above formula.
\end{proof}

\subsection{The dynamical degree}

Now we give a geometric interpretation of $\delta_{F,\nu}$. In fact, we define a geometric intersection between the current $T_\nu$ associated to a probability measure $\nu$ and a general hypersurface $Y$ in $\mathbb{P}(E)$.

The general definition is technically involved. However, when $Y$ is transverse to $\mathcal F$, the idea is quite simple. In that case, one can find foliated coordinates $(z,t)$ where $\mathcal F$ is given by $t=cst$, while $Y$ is given by $z_1= 0$. If $T  = \int \phi (z,t) dm(t)$ is the desintegration of the harmonic current as an integral of harmonic functions along the leaves, the restriction of $T$ to $Y= \{z_1= 0\}$ has a well-defined meaning: indeed, the functions $\phi(.,t)$ extends as harmonic functions at $z= 0$ for $m$-almost any $t$. In particular, the measure $\omega ^{d-1} \otimes m$ is well-defined on $Y$. The total mass of this measure is the intersection $T_\nu \cap [Y]$. In general, when no transversality holds, it is better to use a partition of unity in order to take into account the multiplicities. 

Let $Y$ be a hypersurface in $\mathbb{P}(E)$ such that $Y$ contains no germ of a leaf of the foliation $\mathcal{F}$. Let $(U_i,\chi_i)_{i \in I}$ be a partition of unity of $X$; we assume that the open sets $U_i$ are simply connected so that $p^{-1}(U_i)$ is diffeomorphic by parallel transport to the product $U_i \times \mathbb{P}(E_{x_i})$, where $x_i$ is a point in $U_i$. If $\alpha$ is a smooth $(1,1)$-form with compact support in $U_i$, then
$$ T_\nu(\alpha) = \frac{\pi}{\vol(X)} \int_{\mathbb{P}(E_{x_i})} d\gamma_i(t) \Big(\int_{\{t\} \times U_i} \phi_i(z,t)\alpha(z,t) \wedge \frac{\omega^{d-1}}{(d-1)!} \Big),$$
with the notations of equation \eqref{local_struct}.

\begin{defn}
 Let $f_i$ be an equation of $Y$ in $U_i$. The \emph{geometric intersection} of $Y$ and $T_\nu$ is defined by
\begin{align*}
  &T_\nu \cap [Y] := \frac {\sqrt{-1}}{\vol(X)} \\
 &\sum_i \int_{\mathbb{P}(E_{x_i})} d\gamma_i(t) \Big(\int_{\{t\} \times U_i} \phi_i(z,t)\chi_i(z,t) \d \bar{\d} \log |f_i(z,t)| \wedge \frac{\omega^{d-1}}{(d-1)!} \Big),
\end{align*}
where the inner integral is understood in the sense of currents. This is well-defined since, by assumption on $Y$, the function $f_i(\cdot,t)$ does not identically vanish on $\{t\} \times U_i$.
\end{defn}

The equation $f_i(\cdot,t) = 0$ gives a divisor $Y_{i,t}$ in $\{t\} \times U_i$, for any fixed $t$ in the fiber $\mathbb{P}(E_{x_i})$. We write
$$Y_{i,t} = \sum_k n_{t,i,k} Z_{t,i,k},$$ 
where $n_{i,k}$ is a positive integer and $Z_{t,i,k}$ is an analytic hypersurface in the local leaf $\{t\} \times W_i$. Then, the geometric intersection is denoted by $T_\nu \cap [Y]$ and is equal to 
\begin{equation} \label{geom_inter}
\frac{\pi}{\vol(X)} \sum_i \int_{\mathbb{P}(E_{x_i})} d\gamma_i(t) \Big( \sum_k n_{t,i,k} \int_{Z_{t,i,k}} \phi_i(z,t)\chi_i(z,t) \frac{\omega^{d-1}}{(d-1)!} \Big),
\end{equation}
thanks to the Lelong-Poincaré formula.

In the following, we assume that $\mathbb{P}(F)$ satisfies the weak condition of containing no germ of a leaf. If not, then there is a section of $F$ whose parallel transport always stays in $F$ by the analytic continuation principle.

\begin{thm}\label{thmconj_princ}
 The geometric intersection $T_\nu \cap [\mathbb{P}(F)]$ is finite and equals the dynamical degree $\delta_F$. In particular, $\delta_F \geq 0$ with equality if, and only if $\mathbb{P}(F)$ does not encounter the support of the current $T_\nu$.
\end{thm}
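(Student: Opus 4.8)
The plan is to deduce $T_\nu\cap[\mathbb{P}(F)]=\delta_F$ from the Poincaré--Lelong formula applied in the leaves, together with the pluriharmonicity of $T_\nu$. Write $L:=\mathcal{O}([\mathbb{P}(F)])$, let $s$ be its canonical holomorphic section cutting out $\mathbb{P}(F)$, and equip $L$ with the metric induced by $h$. The leafwise Poincaré--Lelong identity reads, on every leaf $\mathcal{L}$,
\begin{equation*}
 c_1(L,h)|_{\mathcal{L}}=[\mathbb{P}(F)\cap\mathcal{L}]-\frac{\sqrt{-1}}{\pi}\,\d_{\mathcal{F}}\bar{\d}_{\mathcal{F}}\log|s|_h,
\end{equation*}
as an equality of currents. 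By the very definition of the geometric intersection --- in which $\tfrac{\sqrt{-1}}{\pi}\d_{\mathcal{F}}\bar{\d}_{\mathcal{F}}\log|f_i|$ is, by Poincaré--Lelong, the leafwise divisor current $[Y_{i,t}]$ --- pairing the integration current $[\mathbb{P}(F)]$ with $T_\nu$ through the disintegration \eqref{local_struct} gives exactly $T_\nu\cap[\mathbb{P}(F)]$. Hence the theorem is equivalent to the vanishing $T_\nu\big(\d_{\mathcal{F}}\bar{\d}_{\mathcal{F}}\log|s|_h\big)=0$.

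The obstacle is that $\log|s|_h$ is singular along $\mathbb{P}(F)$, so the pluriharmonicity of Remark \ref{rem_mes_harm}, stated for smooth functions, does not apply verbatim. I would regularize by $\psi_\epsilon:=\tfrac12\log(|s|_h^2+\epsilon^2)$, which is smooth on the compact manifold $\mathbb{P}(E)$; pluriharmonicity then gives $T_\nu(\d_{\mathcal{F}}\bar{\d}_{\mathcal{F}}\psi_\epsilon)=0$ for every $\epsilon>0$. It remains to let $\epsilon\to0$. Expanding the pairing via \eqref{local_struct}, on each local leaf $\{t\}\times U_i$ the smooth forms $\tfrac{\sqrt{-1}}{\pi}\d_{\mathcal{F}}\bar{\d}_{\mathcal{F}}\psi_\epsilon$ converge weakly, as currents, to $[\mathbb{P}(F)\cap\mathcal{L}]-c_1(L,h)|_{\mathcal{L}}$, by the standard regularized Poincaré--Lelong. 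The decisive point --- and the hardest step --- is that the disintegration density $\phi_i$ is leaf-harmonic, hence smooth along the leaves, so it is an admissible test object: the leafwise integrals $\int_{\{t\}\times U_i}\phi_i\chi_i\,\big(\tfrac{\sqrt{-1}}{\pi}\d_{\mathcal{F}}\bar{\d}_{\mathcal{F}}\psi_\epsilon\big)\wedge\tfrac{\omega^{d-1}}{(d-1)!}$ converge to the pairing against the limit current. Compactness of $\mathbb{P}(E)$ yields uniform bounds on $c_1(L,h)$ and on the leafwise volumes of $\mathbb{P}(F)\cap\mathcal{L}$, hence an integrable dominating function for the transverse integral against $\gamma_i$; dominated convergence then gives
\begin{equation*}
 0=\lim_{\epsilon\to0}T_\nu\big(\tfrac{\sqrt{-1}}{\pi}\d_{\mathcal{F}}\bar{\d}_{\mathcal{F}}\psi_\epsilon\big)=\big(T_\nu\cap[\mathbb{P}(F)]\big)-T_\nu\big(c_1(L,h)\big)=\big(T_\nu\cap[\mathbb{P}(F)]\big)-\delta_F,
\end{equation*}
which is the asserted identity; in particular $T_\nu\cap[\mathbb{P}(F)]=\delta_F=T_\nu(c_1(L,h))$ is finite, being the pairing of a bounded smooth form against the probability measure $\nu$.

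Finally, the sign and the equality case follow by inspection of \eqref{geom_inter}. Each factor there is nonnegative --- the density $\phi_i\ge0$, the cutoff $\chi_i\ge0$, the multiplicities $n_{t,i,k}$ are positive integers, and $\int_{Z_{t,i,k}}\tfrac{\omega^{d-1}}{(d-1)!}>0$ --- so $\delta_F=T_\nu\cap[\mathbb{P}(F)]\ge0$. The sum vanishes precisely when, for $\gamma_i$-almost every $t$, every leafwise component $Z_{t,i,k}$ avoids $\{\phi_i\chi_i>0\}$; since these sets cover $\operatorname{supp}(\nu)$ and $\operatorname{supp}(T_\nu)=\operatorname{supp}(\nu)$ by the local structure \eqref{local_struct}, this happens if and only if $\mathbb{P}(F)$ does not meet $\operatorname{supp}(T_\nu)$. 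Throughout, the standing hypothesis that $\mathbb{P}(F)$ contains no germ of a leaf is what guarantees $f_i(\cdot,t)\not\equiv0$, so that the leafwise divisors and their volumes are genuinely finite and the whole argument makes sense.
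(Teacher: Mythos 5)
Your argument is correct and keeps the paper's skeleton: both proofs use the leafwise Lelong--Poincar\'e identity to split $c_1(L,h)$, $L=\mathcal{O}([\mathbb{P}(F)])$, into the geometric intersection term and the term $\tfrac{\sqrt{-1}}{\pi}\d_{\mathcal{F}}\bar{\d}_{\mathcal{F}}\log h(s)$, reduce the theorem to the vanishing of $T_\nu$ on the latter via pluriharmonicity, and read off the sign and equality case from the positivity of every factor in \eqref{geom_inter}. Where you genuinely diverge is the regularization used to justify applying pluriharmonicity to the singular potential. The paper truncates \emph{spatially}: it multiplies $\log h(s)$ by a cutoff $\psi$ vanishing near $\mathbb{P}(F)$, applies $T_\nu(\d\bar{\d}(\psi\log h(s)))=0$, and controls the remainder using only the local integrability of $\log h(s)$ as the neighborhood shrinks --- no uniform control of the leafwise divisor masses is needed at that stage (though the paper silently discards the cross terms $\d\psi\wedge\bar{\d}\log h(s)$, which require cutoffs with controlled gradients, so its sketch is no more detailed than yours). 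You instead smooth the potential \emph{globally} by $\psi_\epsilon=\tfrac12\log(|s|_h^2+\epsilon^2)$, which is cleaner along each leaf, but shifts all the difficulty into the dominated-convergence step, and your one-line justification understates it: an $\epsilon$-uniform, $\gamma_i$-integrable bound requires (i) the curvature inequality $\tfrac{\sqrt{-1}}{\pi}\d_{\mathcal{F}}\bar{\d}_{\mathcal{F}}\psi_\epsilon\geq -C\,\omega'$ along the leaves, bounding the negative part, and (ii) a bound on the mass of the positive part over $\operatorname{supp}\chi_i$ that is uniform in $t\in\mathbb{P}(E_{x_i})$, i.e.\ a uniform bound on the divisor masses of $f_i(\cdot,t)$; this holds by a Jensen/Hurwitz-type continuity argument in $t$ on the compact transversal, and it is exactly here that the no-germ-of-leaf hypothesis (which you correctly flag as guaranteeing $f_i(\cdot,t)\not\equiv 0$ for every $t$) is consumed. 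So your route is sound and essentially equivalent in strength; the paper's cutoff buys a cheaper error estimate (only $L^1_{\mathrm{loc}}$ of $\log h(s)$ against the product measure $\phi_i\chi_i\,\gamma_i\otimes\omega^{d-1}$), while your $\epsilon$-smoothing buys a cleaner leafwise limit at the price of a transversally uniform mass estimate that should be spelled out. Your treatment of the equality case, with $\operatorname{supp}(T_\nu)=\operatorname{supp}(\nu)$ via \eqref{local_struct}, is consistent with, and slightly more explicit than, the paper's one-line conclusion.
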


\begin{proof}
The notations are as above, with $Y = \mathbb{P}(F)$. The equations $f_i$ define a global section $s$ of the line bundle $L := \mathcal{O}([\mathbb{P}(F)])$. Over $U_i$, the following equality of currents holds:
$$ \frac{\sqrt{-1}}{\pi} \d \bar{\d} \log h(s) = \frac{\sqrt{-1}}{\pi} \d \bar{\d} \log |f_i| - c_1(L,h).$$
Then, $T_{\nu}(c_1(L,h)) = \sum_i T(\chi_i c_1(L,h))$; hence it is equal to $\frac{\pi}{\vol(X)}$ times
\begin{equation} \label{geom_def}
 \sum_i \int_{\mathbb{P}(E_{x_i})} d\gamma_i(t) \Big(\int_{\{t\} \times U_i} \phi_i(z,t) \chi_i(z,t) c_1(L,h) \\ \wedge \frac{\omega^{d-1}}{(d-1)!}\Big)
\end{equation}
\begin{multline*}
= \frac{\sqrt{-1}}{\vol(X)} \sum_i \int_{\mathbb{P}(E_{x_i})} d\gamma_i(t) \Big(\int_{\{t\} \times U_i} \phi_i(z,t) \chi_i(z,t) \big(\d \bar{\d} \log |f_i| - \\ \d \bar{\d} \log |h(s)|\big) \wedge \frac{\omega^{d-1}}{(d-1)!}\Big).
\end{multline*}

The first term is the geometric intersection $T_\nu \cap [\mathbb{P}(F)]$. We claim that the other term vanishes; intuitively this follows from the fact that $T_\nu$ is a pluriharmonic current and we apply it to a $\d \bar{\d}$-exact current. 

Here is a formal proof. Let $\psi$ be a smooth function on $\mathbb{P}(E)$ which equals $1$ outside a neighborhood $W$ of $\mathbb{P}(F)$ and $0$ on a (smaller) neighborhood $V$. Since $\d \bar{\d} (\psi\log |h(s)|)$ is a smooth $(1,1)$-form, one gets $T_\nu(\d \bar{\d} (\psi\log |h(s)|)) = 0$ by pluriharmonicity of $T_\nu$. On the other hand, the integral

$$\int_{\mathbb{P}(E_{x_i})} d\gamma_i(t) \Big(\int_{\{t\} \times U_i} \phi_i(z,t) \chi_i(z,t) \big(\d \bar{\d} ((1-\psi)\log |h(s)|) \big) \wedge \omega^{d-1}\Big),$$
understood in the sense of currents, tends to $0$ when the measure of the neighborhood $V$ tends to $0$ since $\log |h(s)|$ is a locally integrable function.

This shows that $\delta_{F,\nu} = T_\nu \cap [\mathbb{P}(F)]$. The assertions of nonnegativity and positivity then follow from equation \eqref{geom_inter}.
\end{proof}

\subsection{Higher codimension} \label{subsec_codim}

The setting is the same as before, except that $F$ has codimension $k$ in $E$. We explain how to obtain from $F$ informations on the partial sum of Lyapunov exponents $\lambda_1 + \dots + \lambda_k$.

Let $F^o$ be the annihilator of $F$ in $E^\ast$. The exterior product $\Lambda^k F^o$ can be thought as a line bundle $L$ in $\Lambda^k (E^\ast)$; let $\hat{F}$ be the annihilator of $L$ in $\Lambda^k E$, that we identify to the dual of $\Lambda^k (E^\ast)$.

\begin{lem}\label{equal_degree}
The following equality of degrees holds:
 $$\deg(F) = \deg(\hat{F}).$$
\end{lem}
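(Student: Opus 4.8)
The plan is to reduce the statement to a computation of first Chern classes, using two structural facts: both $E$ and $\Lambda^k E$ are flat, and on a compact Kähler manifold the analytic degree of Definition \ref{def_degree} is cohomological. Indeed, since $\omega^{d-1}$ is closed, $\int_X c_1(\mathcal{E},h) \wedge \omega^{d-1}$ depends only on the class $c_1(\mathcal{E}) \in H^2(X,\R)$ and not on the admissible metric $h$; in particular the degree is additive in short exact sequences of holomorphic bundles. So it suffices to show $c_1(\hat{F}) = c_1(F)$.

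First I would make the two identifications explicit. Since $F$ has corank $k$, its annihilator $F^o \subset E^\ast$ is canonically isomorphic to $(E/F)^\ast$, whence $L = \Lambda^k F^o \cong \Lambda^k\big((E/F)^\ast\big) \cong \big(\det(E/F)\big)^\ast$, the rank of $E/F$ being exactly $k$. For the second bundle I would invoke the elementary fact that if $W$ is a subspace of the dual $V^\ast$ of a vector space $V$, then the pairing $V \times W \to \C$ has left kernel the annihilator $W^o$ and hence induces a canonical isomorphism $V/W^o \cong W^\ast$. Applying this fiberwise with $V = \Lambda^k E$, $V^\ast = \Lambda^k(E^\ast)$ and $W = L$ — so that $W^o = \hat{F}$ by definition of $\hat{F}$ — I obtain $\Lambda^k E / \hat{F} \cong L^\ast \cong \det(E/F)$.

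It then remains to compare Chern classes. Because the flat connection $D$ has vanishing curvature, Chern--Weil theory gives $c_1(E) = 0$ in $H^2(X,\R)$; and since $D$ induces a flat connection $D^{\Lambda^k E}$ on $\Lambda^k E$, likewise $c_1(\Lambda^k E) = 0$. The exact sequence $0 \to F \to E \to E/F \to 0$ yields $c_1(F) = -c_1(E/F)$, while $0 \to \hat{F} \to \Lambda^k E \to \Lambda^k E/\hat{F} \to 0$ together with the identification $\Lambda^k E/\hat{F} \cong \det(E/F)$ yields $c_1(\hat{F}) = -c_1(\det(E/F)) = -c_1(E/F)$. Hence $c_1(\hat{F}) = c_1(F)$, and pairing with $[\omega]^{d-1}$ gives $\deg(F) = \deg(\hat{F})$.

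The argument is essentially formal, so I do not expect a serious obstacle; the only points needing a little care are that the canonical vector-space identifications above are genuine holomorphic isomorphisms of bundles — which holds because all the maps involved are natural in $E$ — and the reduction of the metric-dependent analytic degree to the cohomological pairing, which is precisely where compactness of $X$ enters.
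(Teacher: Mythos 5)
Your proof is correct and follows essentially the same route as the paper's: the paper likewise combines two short exact sequences (namely $0 \to F^o \to E^\ast \to F^\ast \to 0$ and $0 \to \hat{F} \to \Lambda^k E \to (\Lambda^k F^o)^\ast \to 0$, whose quotient is exactly your $\det(E/F)$), the flatness of $E$ and $\Lambda^k E$, additivity of the degree, and the rank-$k$ identity $\deg(F^o) = \deg(\Lambda^k F^o)$. Your phrasing via first Chern classes and via $0 \to F \to E \to E/F \to 0$ together with $F^o \cong (E/F)^\ast$, rather than via degrees and the dualized sequence, is only a cosmetic repackaging of the same argument.
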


\begin{proof}
 From the exact sequence
$$ 0 \rightarrow F^o \rightarrow E^\ast \rightarrow F^\ast \rightarrow 0,$$
we get that $\deg(F^o) + \deg(F^\ast) = \deg(E^\ast) = 0$ since $E$ is flat. Hence, $\deg(F^o) = \deg(F)$. In the same way, the exact sequence
$$ 0 \rightarrow \hat{F} \rightarrow \Lambda^k E \rightarrow (\Lambda^k F^o)^\ast \rightarrow 0,$$
implies that $\deg(\Lambda^k F^o) = \deg(\hat{F})$. Since $F^o$ is a vector bundle of rank $k$, we also have $\deg(F^o) = \deg(\Lambda^k F^o)$. This concludes the proof.
\end{proof}

By the Plücker embedding, the bundle $\Gr(k,E)$ of plans of dimension $k$ in $E$ is a subbundle of the projectivized bundle $\mathbb{P}(\Lambda^k E)$. We have the following geometric description:

\begin{prop} \label{descr_Fhat}
 The intersection $\Gr(k,E) \bigcap \mathbb{P}(\hat{F})$ in $\mathbb{P}(\Lambda^k E)$ is the set of $k$-planes in $E$ that intersect $F$ non trivially.
\end{prop}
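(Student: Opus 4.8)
The plan is to reduce the statement to a fibrewise computation in linear algebra, since all the bundles involved are built fibre by fibre and both conditions---membership in $\mathbb{P}(\hat{F})$ and nontrivial intersection with $F$---are pointwise in the base. So I fix a point $x \in X$ and work inside the fibre, abusively writing $E$, $F$, $F^o$ for $E_x$, $F_x$, $F^o_x$; here $\dim E = n$, $\dim F = n-k$, hence $\dim F^o = k$, so that $L = \Lambda^k F^o$ is indeed a line in $\Lambda^k E^\ast$ and $\hat{F}$ is the hyperplane of $\Lambda^k E$ annihilating it under the duality pairing $\Lambda^k E = (\Lambda^k E^\ast)^\ast$.

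First I would make the Plücker description explicit. A point of $\Gr(k,E)$ is a $k$-plane $P \subset E$, whose image under the Plücker embedding is the line $[\Lambda^k P] = [v_1 \wedge \cdots \wedge v_k]$ for any basis $v_1,\dots,v_k$ of $P$. Choosing a basis $\xi_1,\dots,\xi_k$ of $F^o$, so that $\xi_1 \wedge \cdots \wedge \xi_k$ spans $L$, the condition $P \in \mathbb{P}(\hat{F})$ reads $\Lambda^k P \subset \hat{F}$, that is
\begin{equation*}
\langle v_1 \wedge \cdots \wedge v_k,\ \xi_1 \wedge \cdots \wedge \xi_k \rangle = \det\big(\xi_j(v_i)\big) = 0,
\end{equation*}
the last equality being the standard formula for the pairing on decomposable vectors (any normalization convention only rescales by a nonzero factor, which is irrelevant for the vanishing).

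The key step is then to recognize this determinant. Using the canonical identification $F^o = (E/F)^\ast$ coming from dualizing $0 \rightarrow F \rightarrow E \rightarrow E/F \rightarrow 0$, the matrix $\big(\xi_j(v_i)\big)$ is exactly the matrix, in the chosen bases, of the composite linear map
\begin{equation*}
\Phi:\ P \hookrightarrow E \twoheadrightarrow E/F.
\end{equation*}
Since $\dim P = k = \dim(E/F)$, the map $\Phi$ goes between spaces of equal dimension, so $\det \Phi = 0$ if and only if $\Phi$ is not an isomorphism, i.e. if and only if $\ker \Phi \neq 0$. By construction $\ker \Phi = P \cap F$. Combining this with the previous display, $P \in \mathbb{P}(\hat{F})$ if and only if $P \cap F \neq 0$, which is precisely the assertion; letting $x$ vary over $X$ gives the global statement.

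The only point requiring genuine care---and the step I would write out most explicitly---is the bookkeeping of the dualities: verifying that $\dim F^o = k$, that $\Lambda^k F^o$ spans the line annihilated by $\hat{F}$, and above all that the perfect pairing between $\Lambda^k E$ and $\Lambda^k E^\ast$ sends the pair of decomposable vectors to $\det\big(\xi_j(v_i)\big)$, together with the identification $F^o \cong (E/F)^\ast$ under which this determinant becomes $\det \Phi$. Once these identifications are pinned down the conclusion is immediate, and no compactness, Kähler, or analytic input is needed.
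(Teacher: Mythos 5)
Your proof is correct and follows essentially the same route as the paper's: both reduce to linear algebra in a fibre, use the Plücker embedding to express membership in $\mathbb{P}(\hat{F})$ as the vanishing of $\det\big(\xi_j(v_i)\big)$, and identify this vanishing with $P \cap F \neq 0$. The only (cosmetic) difference is that you interpret the determinant as $\det \Phi$ for the composite $P \hookrightarrow E \twoheadrightarrow E/F$, whereas the paper argues directly that singularity of the matrix yields a nonzero $v = \sum \lambda^j v_j$ annihilated by all of $F^o$, hence lying in $F$.
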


\begin{proof}
 This is a statement in linear algebra. Let $G$ be a $k$-plane, with basis $v_1,\dots,v_k$. Let $f_1^\ast,\dots,f_k^\ast$ be a basis of $F^o$. In the Plücker embedding, $G$ is identified with the point $[v_1 \wedge \dots \wedge v_k]$ in $\mathbb{P}(\Lambda^k E)$. Thus, by definition of $\hat{F}$, $G$ is in $\hat{F}$ if and only if $(f_1^\ast \wedge \dots \wedge f_k^\ast)(v_1 \wedge \dots \wedge v_k) = 0$. This is equivalent to the non-invertibility of the matrix $(f_i^\ast(v_j))_{i,j}$, hence to the existence of a non-trivial linear combination $v = \sum \lambda^j v_j$ such that $f_i^\ast(v) = 0$, for every $i$. This happens if and only if $v$ is in $F$; thus such a $v$ exists if and only if $G$ intersects $F$ non trivially.
\end{proof}

\begin{rem}
 If $k = 1$, then $\mathbb{P}(\hat{F}) = \mathbb{P}(F)$, in accordance with previous subsections.
\end{rem}

We now consider a harmonic measure $\nu$ on $\mathbb{P}(\Lambda^k E)$. From Remark \ref{supp_harm} and Proposition \ref{convex}, we can assume that $\nu$ is supported on the Grassmannian $\Gr(k,E)$. From Theorem \ref{fund_form}, we known that the Lyapunov exponent $\lambda(\nu)$ satisfies:
$$ \lambda(\nu) = \delta_F + \pi \cdot \deg(F),$$
where we write $\delta_F$ for $\delta_{\hat{F},\nu}$.
Moreover, from Definition \ref{def_higher}, Remark \ref{ineq_lambda} and Theorem \ref{equal_lambda}, the inequality
$$ \lambda_1 + \dots + \lambda_k \geq \lambda(\nu) $$
holds, with equality if the monodromy $\Lambda^k \rho: \pi_1(X) \rightarrow GL(\Lambda^k \C^n)$ is strongly irreducible -- we say that $(E,D)$ is \emph{strongly $k$-irreducible}. By Theorem \ref{thmconj_princ}, we get:

\begin{prop}
 The Lyapunov exponents of $(E,D)$ satisfy the inequality
$$ \lambda_1 + \dots + \lambda_k \geq \pi \cdot \deg(F).$$
Moreover, if $(E,D)$ is strongly $k$-irreducible, then the equality
\begin{equation} \label{fund_equal}
 \lambda_1 + \dots + \lambda_k = \pi \cdot \deg(F)
\end{equation}
holds if and only if the the support of the harmonic current $T_\nu$ does not intersect $\mathbb{P}(\hat{F})$. 
\end{prop}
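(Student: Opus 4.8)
The plan is to reduce the statement to the first Lyapunov exponent of the exterior power $\Lambda^k E$ and then to transport, essentially verbatim, the co-rank one machinery of the previous subsections to the line bundle $\hat F$ inside $\Lambda^k E$.

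First I would recall from Definition \ref{def_higher} that $\lambda_1 + \dots + \lambda_k$ equals the first Lyapunov exponent $\lambda(\Lambda^k E)$. The flat structure of $E$ induces one on $\Lambda^k E$, and via the Plücker embedding $\Gr(k,E)$ sits inside $\mathbb{P}(\Lambda^k E)$ as a closed invariant subset, since parallel transport carries decomposable $k$-vectors to decomposable $k$-vectors. By Remark \ref{supp_harm} and Proposition \ref{convex} I may therefore choose a harmonic measure $\nu$ whose support lies in $\Gr(k,E)$, and form the associated pluriharmonic current $T_\nu$.

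Next I would apply Theorem \ref{fund_form} to the co-rank one subbundle $\hat F$ of $\Lambda^k E$, which yields $\lambda(\nu) = \delta_F + \pi \cdot \deg(\hat F)$ with $\delta_F := \delta_{\hat F,\nu}$; Lemma \ref{equal_degree} then replaces $\deg(\hat F)$ by $\deg(F)$. The inequality follows by a sandwich: Remark \ref{ineq_lambda} applied to $\Lambda^k E$ gives $\lambda_1 + \dots + \lambda_k \geq \lambda(\nu)$, while Theorem \ref{thmconj_princ} provides $\delta_F \geq 0$, so that $\lambda_1 + \dots + \lambda_k \geq \delta_F + \pi \deg(F) \geq \pi \deg(F)$.

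For the equality case I would use that strong $k$-irreducibility is, by definition, strong irreducibility of the monodromy of $\Lambda^k E$, so Theorem \ref{equal_lambda} upgrades the first inequality to $\lambda_1 + \dots + \lambda_k = \lambda(\nu) = \delta_F + \pi \deg(F)$. Hence equality in the Proposition is equivalent to $\delta_F = 0$, and by the positivity criterion of Theorem \ref{thmconj_princ} this holds precisely when $\mathbb{P}(\hat F)$ avoids the support of $T_\nu$. I do not expect a genuine obstacle here, since all the analytic weight sits in the quoted theorems; the only points needing care are verifying that $\Gr(k,E)$ is truly leaf-saturated so that $\nu$ can be supported there, that the induced metric on $\Lambda^k E$ remains admissible and satisfies Assumption \ref{bound_higgs}, and that $\mathbb{P}(\hat F)$ contains no germ of a leaf — after which Proposition \ref{descr_Fhat} reinterprets the support condition as the limit set of the monodromy avoiding the locus of $k$-planes meeting $F$ nontrivially.
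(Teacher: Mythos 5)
Your proposal is correct and follows the same route as the paper: reduce to the first Lyapunov exponent of $\Lambda^k E$ via Definition \ref{def_higher}, support a harmonic measure on $\Gr(k,E)$ using Remark \ref{supp_harm} and Proposition \ref{convex}, apply Theorem \ref{fund_form} together with Lemma \ref{equal_degree} to the co-rank one subbundle $\hat F \subset \Lambda^k E$, and conclude with Remark \ref{ineq_lambda}, Theorem \ref{equal_lambda} and the positivity criterion of Theorem \ref{thmconj_princ}. The side conditions you flag (leaf-saturation of the Grassmannian, admissibility of the induced metric, and $\mathbb{P}(\hat F)$ containing no germ of a leaf) are exactly the hypotheses the paper carries along as well.
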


Since the support of $T_\nu$ can be assumed to be contained in the Grassmannian $\Gr(k,E)$, the criterion of equality can be used as follows:

\begin{prop} \label{fund_prop}
We assume that $(E,D)$ is strongly $k$-irreducible. If there exists a closed invariant subset $\mathcal{M}$ of $\Gr(k,E)$ such that any $k$-plane $G$ in $\mathcal{M}$ intersects $F$ trivially, then the equality \eqref{fund_equal} holds.
\end{prop}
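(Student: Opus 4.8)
The plan is to combine the equality criterion proved just above (the Proposition preceding this one) with Remarks \ref{supp_harm} and the geometric description of $\mathbb{P}(\hat{F})$ in Proposition \ref{descr_Fhat}. The final statement asserts that the existence of a closed invariant set $\mathcal{M} \subset \Gr(k,E)$ whose $k$-planes all meet $F$ trivially forces equality \eqref{fund_equal}. Since the previous proposition already reduces the equality to the single condition that the support of $T_\nu$ avoids $\mathbb{P}(\hat{F})$, the entire content here is to manufacture a harmonic measure $\nu$ whose support is disjoint from $\mathbb{P}(\hat{F})$, using $\mathcal{M}$.

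First I would invoke Remark \ref{supp_harm}: since $\mathcal{M}$ is a non-empty closed invariant subset of $\Gr(k,E)$, and $\Gr(k,E)$ is a closed invariant subset of $\mathbb{P}(\Lambda^k E)$, there exists a harmonic measure $\nu$ with $\operatorname{supp}(\nu) \subseteq \mathcal{M}$. Because $T_\nu$ is built by disintegrating $\nu$ along the leaves (via the local description \eqref{local_struct}), the support of $T_\nu$ is contained in $\operatorname{supp}(\nu)$, hence in $\mathcal{M}$. Next I would translate the hypothesis on $\mathcal{M}$ into a statement about $\mathbb{P}(\hat{F})$: by Proposition \ref{descr_Fhat}, the intersection $\Gr(k,E) \cap \mathbb{P}(\hat{F})$ is exactly the locus of $k$-planes meeting $F$ non-trivially. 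The assumption that every $G \in \mathcal{M}$ intersects $F$ trivially says precisely that $\mathcal{M} \cap \mathbb{P}(\hat{F}) = \emptyset$. Combining the two inclusions, $\operatorname{supp}(T_\nu) \subseteq \mathcal{M}$ is disjoint from $\mathbb{P}(\hat{F})$.

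Finally I would feed this into the equality criterion of the preceding proposition. That proposition states that, under strong $k$-irreducibility, equality \eqref{fund_equal} holds if and only if $\operatorname{supp}(T_\nu)$ does not intersect $\mathbb{P}(\hat{F})$; we have just verified the latter, so \eqref{fund_equal} follows. Note that strong $k$-irreducibility is in force by hypothesis, which is what lets us use both the equality $\lambda_1 + \dots + \lambda_k = \lambda(\nu)$ (from Theorem \ref{equal_lambda} applied to $\Lambda^k E$) and the sharp criterion, rather than merely the inequality.

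The main obstacle I anticipate is the one soft point in the chain: justifying that $\operatorname{supp}(T_\nu) \subseteq \operatorname{supp}(\nu)$, i.e. that the current $T_\nu$ inherits the support constraint from the measure. This is visually clear from the defining formula $T_\nu(\alpha) = d\pi \int_{\mathbb{P}(E)} f_\alpha \, d\nu$, since a form $\alpha$ supported away from $\operatorname{supp}(\nu)$ produces $f_\alpha$ vanishing $\nu$-almost everywhere and hence $T_\nu(\alpha) = 0$; one must only check that $f_\alpha$ is genuinely supported where $\alpha$ is (which holds because $f_\alpha$ is determined leafwise and pointwise by $\alpha \wedge \omega^{d-1} = f_\alpha \omega^d$). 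Everything else is a direct citation of the already-established results, so this proof is essentially a bookkeeping assembly rather than a new argument.
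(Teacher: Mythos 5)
Your proposal is correct and follows essentially the same route as the paper, whose entire proof reads ``This follows from Remark \ref{supp_harm} and Proposition \ref{descr_Fhat}'': choose a harmonic measure supported in $\mathcal{M}$, observe that the hypothesis makes $\mathcal{M}$ disjoint from $\mathbb{P}(\hat{F})$, and apply the equality criterion of the preceding proposition under strong $k$-irreducibility. Your explicit check that $\operatorname{supp}(T_\nu)\subseteq\operatorname{supp}(\nu)$ merely fills in a step the paper leaves implicit.
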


\begin{proof}
 This follows from Remark \ref{supp_harm} and Proposition \ref{descr_Fhat}.
\end{proof}

\subsection{On the non-compact case} \label{non_comp}

The case where $X$ is non-compact causes a lot of technical complications. In order to simplify the discussion, we assume that $X$ is the complement of a normal crossing divisor in a smooth projective variety $\bar{X}$. Along the divisor, we choose for the metric on $X$ the product of hyperbolic metrics on the pointed disk $\mathbb{D}^\ast$ and euclidean metrics on the disk $\mathbb{D}$; see for instance \cite{mo_nilp}, subsection 4.1. This metric satisfies of course Assumption \ref{bound_geom}. If $(E,D)$ is a flat bundle over $X$, the local monodromies are given by $k$ commuting matrices, where $k$ is the number of local equations of the normal crossing divisor. 

If moreover $(E,D)$ underlies a variation of complex Hodge structures, then the local monodromies have eigenvalues of modulus one, by a theorem of Borel. Moreover, the canonical metric $h$ on $E$ satisfies Assumption \ref{bound_higgs}, thanks to some curvature properties of the period domains. Hence, Proposition \ref{Lyap_courant} applies to this situation.

The troubles come with the holomorphic subbundle $F$. There are two possible definitions for what we have called the dynamical degree of $F$ : the analytic one
$$\delta_{F,h,\nu} := T_\nu(c_1(\mathcal{O}([\mathbb{P}(F)]),h))$$
or the geometric one as in equation \eqref{geom_def}. In order to have the results of this section, it would be nice to show that the two definitions coincide. But it is already unclear what are the conditions of bounded geometry to impose on $F$, so that the geometric definition makes sense. This needs to be clarify in the future. \\

In the next section, we summarize the results already contained in the literature, concerning the equality between sum of Lyapunov exponents and the degree of holomorphic subbundles. We give another proof of these results in the case where $X$ is a compact K\"ahler manifold (of arbitrary dimension). In Subsection \ref{hypergeom}, it is assumed that our results are also valid above the sphere minus three points, for the vector bundles that are considered.

\section{Applications}

The relation between Lyapunov exponents and degrees of holomorphic subbundles has been first observed for a flat bundle carrying a variation of Hodge structures of weight $1$. In the first subsection, we slightly generalize this example and explain how it reduces to a problem in linear algebra. In the second subsection, we discuss about a basic example where we get the equality \eqref{fund_equal}, though the monodromy representation is Zariski dense. In the third subsection, we study the flat bundles that come from the hypergeometric equation and suggest a way to prove the observed phenomena.

\subsection{Families of Hodge structures}

\begin{defn}\label{CFHS}
 Let $X$ be a complex manifold. A \emph{family of complex Hodge structures} of weight $w$ over $X$ is the datum of a complex flat vector bundle $(E,D)$, a non-degenerate flat Hermitian form $h$ on $E$ and an $h$-orthogonal decomposition 
$$ E = \bigoplus^{\perp}_{0 \leq p \leq w} E^p $$
such that, writing $F^p = \oplus_{q \geq p} E^q$, the following conditions are satisfied:
\begin{enumerate}
 \item the decreasing filtration $F^\bullet$ varies holomorphically;
 \item $h$ is positive definite on $E^p$ if $p$ is even and negative definite if $p$ is odd.
\end{enumerate}
\end{defn}

We emphasize that we do not consider \emph{variations} of Hodge structures -- where the axiom of Griffiths' transversality is added -- since it will not be used in the following. The vector $(\dim E^0,\dots,\dim E^w)$ is called the \emph{type} of the family. By changing the signs of $h$ on $E^p$, for odd $p$, we define a Hermitian metric $\hat{h}$ on $E$. We call it the \emph{harmonic metric}.

The most important examples come by looking at the cohomology of a family of compact K\"ahler manifolds (see e.g. \cite{Voi1}). The flat bundle then has a real -- and in fact integral -- structure. 

\begin{defn}\label{RFHS}
 Let $X$ be a complex manifold. A \emph{family of real Hodge structures} of weight $w$ over $X$ is the datum of a real flat vector bundle $(E_\R,D)$, a non-degenerate bilinear form $Q$ which is orthogonal for even $w$ and symplectic for odd $w$, and a decomposition $E_\C := E_\R \otimes_\R \C = \oplus_{p=0}^w E^p$ such that:
\begin{enumerate}
 \item $\overline{E^p} = E^{w-p}$;
 \item Writing $\epsilon = 1$ if $w$ is even and $i$ if $w$ is odd, the form $h(u,v) := \epsilon Q_\C(u,\bar{v})$ is Hermitian and we ask that $(E_{\mathbb{C}} = \oplus_{p=0}^w E^p,D,h)$ is a family of complex Hodge structures.
\end{enumerate}
\end{defn}

We now assume that $X$ is a compact K\"ahler manifold.

\begin{prop}
Let $(E = E^0 \oplus E^1,D,h)$ be a family of complex Hodge structures of weight $1$ and type $(h^0,h^1)$ over $X$. We assume that $h^0 \leq h^1$ and that the monodromy is strongly $h^0$-irreducible. Then
$$ \sum_{k=1}^{h^0} \lambda_k = \pi \cdot \deg(E^1).$$
\end{prop}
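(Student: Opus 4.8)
The plan is to apply Proposition \ref{fund_prop} to the holomorphic subbundle $F := E^1 = F^1$, which is holomorphic by condition (1) of Definition \ref{CFHS} and has co-rank $k := h^0$ in $E$. Since $\deg(F) = \deg(E^1)$, the asserted identity is exactly the equality \eqref{fund_equal} for this $F$ and for $k = h^0$. Because $X$ is compact, the harmonic metric $\hat h$ is admissible and Assumption \ref{bound_higgs} holds automatically, so the whole apparatus of Section 3 is available; here the Lyapunov spectrum is computed with the positive definite metric $\hat h$ (the precise choice being irrelevant by Proposition \ref{dep_metric}), whereas the indefinite flat form $h$ will serve only to build the invariant set below. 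Thus, by Proposition \ref{fund_prop}, it suffices to produce a non-empty closed invariant subset $\mathcal M \subset \Gr(h^0,E)$ every plane of which meets $E^1$ trivially, the remaining hypothesis — strong $h^0$-irreducibility — being assumed.

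I would take
$$ \mathcal M := \{\, G \in \Gr(h^0,E) : h|_G \geq 0 \,\}, $$
the locus of $h^0$-planes on which the flat Hermitian form $h$ is positive semi-definite. This set is closed, since positive semi-definiteness is a closed condition on the Grassmann bundle; it is a union of leaves, since the holonomy of the foliation on $\Gr(h^0,E)$ is $D$-parallel transport and $h$ is $D$-flat, so the signature of $h$ restricted to a plane is constant along leaves; and it is non-empty, since the fiber $E^0_x$ is positive definite of dimension $h^0$ and hence $(x,E^0_x) \in \mathcal M$. Transversality with $E^1$ is then immediate: a non-zero $v \in G \cap E^1$ with $G \in \mathcal M$ would satisfy $h(v,v) \geq 0$ because $v \in G$, yet $h(v,v) < 0$ because $v$ is a non-zero vector of $E^1$, on which $h$ is negative definite by condition (2) of Definition \ref{CFHS} (the weight is $1$, so the index $p=1$ is odd); hence $G \cap E^1 = 0$ for every $G \in \mathcal M$. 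By Proposition \ref{descr_Fhat} this says precisely that $\mathcal M$ avoids $\mathbb P(\hat F)$, so Proposition \ref{fund_prop} yields $\sum_{k=1}^{h^0}\lambda_k = \pi\deg(E^1)$. The hypothesis $h^0 \leq h^1$ is what makes $E^1$ the negative-definite summand of larger rank and $\sum_{k=1}^{h^0}\lambda_k$ the sum over the full non-negative part of the symmetric spectrum; it positions the statement correctly but the sign argument above does not otherwise rely on it.

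The point demanding the most care is the verification that $\mathcal M$ is genuinely a closed, leaf-saturated subset of the foliated space $\Gr(h^0,E)$. One must use that the foliation holonomy coincides with $D$-parallel transport and that $Dh = 0$, so that the indefinite signature of $h$ restricted to a moving plane is a leafwise invariant — even though the Hodge decomposition $E = E^0 \oplus E^1$ is itself not flat, so that neither $E^0$ nor $E^1$ furnishes a leaf directly. Once this is granted, the sign computation is trivial and everything else is a direct appeal to the results of the previous subsections.
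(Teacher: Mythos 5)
Your proof is correct, and it runs through the same reduction as the paper's — apply Proposition \ref{fund_prop} to $F = E^1$ (holomorphic of co-rank $h^0$) after exhibiting a non-empty closed invariant subset of $\Gr(h^0,E)$ all of whose planes meet $E^1$ trivially — but with a genuinely different choice of invariant set. The paper takes $\mathcal{M}$ to be the locus of totally $h$-isotropic $h^0$-planes: flatness of $h$ gives invariance, definiteness of $h$ on $E^1$ gives the trivial intersection, and the hypothesis $h^0 \leq h^1$ is precisely what makes this $\mathcal{M}$ non-empty, since a Hermitian form of signature $(h^0,h^1)$ admits totally isotropic subspaces only up to dimension $\min(h^0,h^1)$. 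Your $\mathcal{M} = \{G : h|_G \geq 0\}$ strictly contains the paper's set; it is closed and leaf-saturated for the same reason ($Dh = 0$, so parallel transport preserves the restricted signature, exactly the point you flag as needing care), but its non-emptiness is witnessed by $(x,E^0_x)$ and therefore costs nothing. What your variant buys is that the hypothesis $h^0 \leq h^1$ becomes superfluous for the argument, as you correctly observe — in the paper's proof it is load-bearing, being the existence condition for isotropic $h^0$-planes. One incidental point in your favor: the paper's proof asserts that ``$E^1$ is positive definite for $h$,'' which is a sign slip (by condition (2) of Definition \ref{CFHS}, with $p=1$ odd, $h$ is negative definite on $E^1$); the isotropy argument is insensitive to this sign, while your semi-definiteness argument genuinely uses it, and you use the correct one. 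Your side remarks — that the exponents are computed with the positive metric $\hat h$ while the flat indefinite $h$ only builds $\mathcal{M}$, that admissibility and Assumption \ref{bound_higgs} are automatic on compact $X$, and the appeal to Proposition \ref{descr_Fhat} to translate trivial intersection into avoidance of $\mathbb{P}(\hat F)$ — all match the paper's standing framework.
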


\begin{proof}
 The vector bundle $E^1$ is holomorphic of co-rank $h^0$. On the Grassmannian bundle $\Gr(h^0,E)$, we consider the subset $\mathcal{M}$ of $h$-isotropic $h^0$-planes. This is a closed invariant subset of $\Gr(h^0,E)$ since $h$ is flat. Moreover, since $E^1$ is positive definite for $h$, it cannot intersect an isotropic plane. We conclude by applying Proposition \ref{fund_prop}.
\end{proof}

Such arguments can also be used in greater weight, as was observed in \cite{Filip}. We consider a family of real Hodge structures $(E,D)$ of weight $2$ and type $(1,k,1)$ over $X$; such situations arise when looking at the second cohomology group of families of families of $K3$ surfaces, see \cite{Filip}.

\begin{prop}\label{thm_w2}
 Writing $E_{\C} = E^0 \oplus E^1 \oplus E^2$, one has
 $$\lambda_1 = \pi \cdot \deg(E^2).$$
 \end{prop}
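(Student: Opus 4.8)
The plan is to reduce Proposition~\ref{thm_w2} to the general machinery developed in Subsection~\ref{subsec_codim}, by exhibiting a suitable closed invariant subset of $\Gr(1,E) = \mathbb{P}(E)$ on which the relevant holomorphic subbundle is avoided. Here $k=1$, so we seek $\lambda_1 = \pi \cdot \deg(F)$ where $F$ is a co-rank $1$ holomorphic subbundle of $E_{\mathbb{C}}$ whose degree equals $\deg(E^2)$. The natural candidate is $F = F^1 = E^1 \oplus E^2$, the middle term of the Hodge filtration, which has co-rank $1$ (since $\dim E^0 = 1$ for type $(1,k,1)$) and is holomorphic because the filtration $F^\bullet$ varies holomorphically by Definition~\ref{CFHS}. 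First I would check the degree bookkeeping: one has $\deg(E^0 \oplus E^1 \oplus E^2) = \deg(E) = 0$ by flatness, and I must verify that $\deg(F^1) = \deg(E^1 \oplus E^2) = \deg(E^2)$. This requires the vanishing $\deg(E^1) = 0$, which should follow from the fact that $E^1$ is defined over $\R$ with $\overline{E^1} = E^1$ (so its Chern form is purely imaginary and real simultaneously) — this is exactly the weight-$2$ reality constraint from Definition~\ref{RFHS}.

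Next, the heart of the argument is to produce a closed invariant subset $\mathcal{M} \subset \mathbb{P}(E)$ consisting of lines $[v]$ that do not lie in $F^1 = E^1 \oplus E^2$, i.e. lines with nonzero $E^0$-component, so that Proposition~\ref{fund_prop} applies and forces equality. The right invariant geometric object, following \cite{Filip}, is built from the polarization $Q$ and the signature data. For a weight-$2$ real Hodge structure of type $(1,k,1)$, the form $h(u,v) = Q_{\mathbb{C}}(u,\bar v)$ is definite on $E^0 \oplus E^2$ (both even weight, positive) and has the opposite sign on the middle piece $E^1$; the monodromy preserves $Q$, hence preserves the real quadric of $Q$-isotropic lines and the cone structure attached to it. I would take $\mathcal{M}$ to be the closed invariant set of lines $[v]$ on which a suitable $Q$-positivity (or $Q$-isotropy) condition holds that is incompatible with $[v] \subset F^1$: concretely, a line in $F^1 = E^1 \oplus E^2$ cannot meet the locus distinguished by positivity against $E^0$, because $E^0$ is $h$-positive-definite and one-dimensional while $F^1 = (E^0)^{\perp_h}$.

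The main obstacle, and the step I expect to require genuine care rather than bookkeeping, is pinning down the correct invariant subset $\mathcal{M}$ and proving both that it is nonempty and closed invariant and that every line in it avoids $F^1$. Unlike the weight-$1$ case (the preceding proposition), where the $h^0$-planes can simply be taken isotropic and isotropy automatically excludes the positive-definite $E^1$, in weight $2$ the signature of $h$ is indefinite on all of $E_{\mathbb{C}}$, so mere isotropy does not suffice to separate a line from $F^1$. I expect the resolution to use the finer structure coming from $\dim E^0 = \dim E^2 = 1$: the extreme Hodge pieces are lines, and the monodromy acting on $\mathbb{P}(E)$ has an invariant subset governed by the positive cone of $Q$ restricted to the real points, whose closure stays off the hyperplane $\mathbb{P}(F^1)$. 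Making this precise — identifying the exact invariant locus and checking invariance under the (possibly Zariski-dense) monodromy while verifying disjointness from $\mathbb{P}(\hat F) = \mathbb{P}(F^1)$ — is where I anticipate the real work, and it is essentially the linear-algebra-plus-dynamics input that \cite{Filip} supplies in the variation-of-Hodge-structure setting. Once $\mathcal{M}$ is in hand, the conclusion is immediate from Proposition~\ref{fund_prop}, modulo checking the strong $1$-irreducibility hypothesis needed to invoke it.
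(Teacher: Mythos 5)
Your plan works and is completable, but it is a genuinely different route from the paper's. The paper never touches the co-rank~$1$ subbundle $F^1 = E^1 \oplus E^2$: it applies the machinery of Subsection~\ref{subsec_codim} to $F = E^2$, of co-rank $k+1$, inside $\Gr(k+1,E_\C) \cong \mathbb{P}(E^\ast_\C)$, taking for $\mathcal{M}$ the set of $(k+1)$-planes $P$ whose orthogonal is a \emph{real isotropic} line; since $\dim E^2 = 1$, a nontrivial intersection would force $E^2 \subset P$, so $L = P^\perp$ would be a real line orthogonal to $E^2$, hence in $E^0 \oplus E^1$, hence (by reality, since $\overline{E^0} = E^2$) in $E^1$ --- impossible, as $h$ is definite there. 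Proposition~\ref{fund_prop} then gives $\lambda_1 + \dots + \lambda_{k+1} = \pi \cdot \deg(E^2)$, and the tail $\lambda_2 + \dots + \lambda_{k+1}$ is killed by the symmetry of the spectrum (Proposition~\ref{symm_lyapunov}). Your version trades that symmetry argument for the degree identity $\deg(E^1) = 0$, which is correct: $\overline{E^1} = E^1$ makes $E^1$ isomorphic to its conjugate bundle, so $c_1(E^1)$ is $2$-torsion and the degree (topological, since $X$ is compact) vanishes, whence $\deg F^1 = \deg E^2$. Note that the two constructions are dual at the linear-algebra core: since $F^1 = (E^2)^{\perp_Q}$, the paper's claim that no plane of $\mathcal{M}$ contains $E^2$ is literally the statement that no real isotropic line lies in $F^1$, which is exactly what your $\mathcal{M}$ needs. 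What each buys: the paper needs no degree computation but uses spectrum symmetry and (implicitly) strong $(k+1)$-irreducibility; your route works with $k=1$, so it needs only strong irreducibility of $E$ itself, at the cost of the reality-of-Chern-class input.

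The step you defer as ``the real work'' is in fact a two-line check; no dynamical input from \cite{Filip} beyond what the paper already provides is needed. Take $\mathcal{M} \subset \mathbb{P}(E)$ to be the lines $[v]$ with $v \in E_\R$ and $Q(v,v) = 0$ (or $\geq 0$): it is closed, leaf-saturated because $E_\R$ and $Q$ are flat, and nonempty because $Q|_{E_\R}$ has signature $(2,k)$. If such a line lay in $F^1$, write $v = v_1 + v_2$ with $v_i \in E^i$; reality $\bar v = v$ forces the $E^0$-component $\overline{v_2}$ of $\bar v$ to vanish, so $v = v_1 \in E^1$ and then $Q(v,v) = h(v,v) < 0$, a contradiction --- the dual of the paper's computation. (You are right that plain $h$-isotropy fails: $v_1 + v_2$ with $h(v_1,v_1) = -h(v_2,v_2)$ is isotropic and lies in $F^1$; reality is the essential extra constraint, not any finer positivity structure.) One caveat you correctly flag applies equally to the paper: Proposition~\ref{thm_w2} as stated carries no irreducibility hypothesis, yet Proposition~\ref{fund_prop} requires one (strong $1$-irreducibility on your route, strong $(k+1)$-irreducibility on the paper's); without it, both arguments only yield the inequality $\lambda_1 \geq \pi \cdot \deg(E^2)$.
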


\begin{proof}
 In the projective bundle $\mathbb{P}(E^{\ast}_\C) \cong \Gr(k+1,E_\C)$, we consider the subset $\mathcal{M}$ of $k+1$ planes $P$ in $E_\C$ whose orthogonal is an isotropic real line. We claim that $E^2$ cannot encounter any such plane $P$. Indeed, since $\dim E_2 = 1$, this would imply that $E_2 \subset P$. Writing $L$ for the orthogonal of $P$, $L$ is in particular orthogonal to $E^2$, hence lives in $E^0 \oplus E^1$. Since $L$ is real, it has to live in $E^1$. This is a contradiction since there is no isotropic line in $E^1$.

From Proposition \ref{fund_prop}, we get that
$$ \lambda_ 1 + \sum_{i=2}^{k+1} \lambda_i = \pi \cdot \deg(E^2).$$
We conclude  by remarking that $\sum_{i=2}^{k+1} \lambda_i = 0$, by the symmetry of the Lyapunov spectrum: cf. Proposition \ref{symm_lyapunov}.
\end{proof}

In both proofs, the leaf-invariant closed subset $\mathcal{M}$ that we construct is not only invariant by the monodromy: it is also invariant under its real Zariski closure. This is why we consider that these situations can be reduced to linear algebra. The situation will be very different in the following subsections.

\subsection{An example with Zariski dense monodromy}

Let $\Gamma$ be a torsion-free finitely generated Kleinian group: that is, $\Gamma$ is a discrete subgroup of $\text{PSL}(2,\C)$. We have an action of $\Gamma$ on the sphere $\mathbb{P}^1(\C)$; we write $\Lambda(\Gamma)$ for the limit set of $\Gamma$ and $\Omega(\Gamma) := \mathbb{P}^1(\C) - \Lambda(\Gamma)$ for the discontinuity set. By Ahlfors finiteness theorem \cite{Ahlf_fin}, the quotient $S := \Omega(\Gamma)/\Gamma$ has a finite number of connected components $S_i$ and each $S_i$ is a compact Riemann surface with a finite number of points removed. We assume that some $S_i$ is compact, for simplicity.

Let $\Omega_i$ be the inverse image of $S_i$ in the projection $\Omega(\Gamma) \rightarrow \Omega(\Gamma)/\Gamma$. The universal cover $\tilde{S}_i$ projects on $\Omega_i$, giving a holomorphic map $\phi: \tilde{S}_i \rightarrow \mathbb{P}^1(\C)$, which is $\pi_1(S_i)$-equivariant for the canonical representation $\rho: \pi_1(X) \rightarrow \Gamma$. To the map $\phi$ corresponds a projective bundle $\mathbb{P}$ of rank $1$ over $S_i$, with a holomorphic section $L$. We claim that there is a harmonic measure $\nu$ such that the dynamical degree $\delta_{L,\nu}$ vanishes.

Indeed, the closed subset $\Lambda(\Gamma)$ in $\mathbb{P}^1(\C) \cong \mathbb{P}_x$ is invariant by the monodromy. If $\mathcal{M}$ is the union of leaves in $\mathbb{P}$ passing through $\Lambda(\Gamma)$, it is a closed invariant subset of $\mathbb{P}(E)$. Hence, there exists a harmonic measure $\nu$ with support on $\mathcal{M}$. The line bundle $L$ does not encounter $\mathcal{M}$ since the map $\phi$ takes its values in $\Omega(\Gamma)$. This proves the claim.

On the other hand, it is important to remark that the image of the monodromy will in general be dense in $\text{PSL}(2,\C)$ for the real Zariski topology. This is for instance the case for quasi-Fuchshian groups (which are not Fuchsian) or Schottky groups $\Gamma$.

\subsection{On the hypergeometric equation} \label{hypergeom}

In this subsection, we assume that our results are valid in the non-compact case ; see Subsection \ref{non_comp} for more details.\\

Let $X = \mathbb{P}^1(\C) - \{0,1,\infty\}$ with its hyperbolic metric. One can consider families of $3$-dimensional Calabi-Yau manifolds over $X$. The degree $3$ cohomology of such families gives interesting examples of variations of real Hodge structures of weight $3$ and type $(1,1,1,1)$; we write $E_\C = E^0 \oplus E^1 \oplus E^2 \oplus E^3$. Each $E^i$ is thus a complex line bundle. Following the proof of Theorem \ref{thm_w2}, we consider in $\Gr(2,E_\C)$ the subset $\mathcal{M}$ of $2$-planes in $E_\C$ that are real and isotropic. If $E^2 \oplus E^3$ did not intersect any $2$-plane in $\mathcal{M}$, then we would obtain an equality for the sum $\lambda_1 + \lambda_2$. However, this does not work:

\begin{lem} \label{lin_alg_lem}
 The $2$-plane $E^2 \oplus E^3$ always intersect in a non-trivial way some $2$-plane in $\mathcal{M}$.
\end{lem}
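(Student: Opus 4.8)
The plan is to exhibit a concrete real isotropic $2$-plane $P \in \mathcal{M}$ meeting $E^2 \oplus E^3$ non-trivially. Recall that a $2$-plane $P \subset E_\C$ is \emph{real} precisely when it is stable under complex conjugation. Hence, if $P$ contains a nonzero vector $v$, it must also contain $\bar v$; conversely, whenever $v$ and $\bar v$ are $\C$-linearly independent, their span is automatically conjugation-stable. I would therefore look for a single nonzero vector $v \in E^2 \oplus E^3$ such that $P := \C v \oplus \C \bar v$ is $2$-dimensional and $Q$-isotropic: this $P$ then lies in $\mathcal{M}$ and satisfies $v \in P \cap (E^2 \oplus E^3)$, which proves the lemma.

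The next step is to reduce isotropy of $P$ to a single scalar equation on $v$. Since $Q$ (the $\C$-bilinear extension $Q_\C$) is antisymmetric, one has $Q(v,v) = Q(\bar v, \bar v) = 0$ automatically, so $\C v \oplus \C \bar v$ is $Q$-isotropic if and only if $Q(v, \bar v) = 0$. Here I would invoke the defining relation $h(u,w) = \epsilon\, Q_\C(u, \bar w) = i\, Q(u, \bar w)$, with $\epsilon = i$ since the weight is odd, which gives $Q(v, \bar v) = -i\, h(v,v)$. Thus the isotropy of $P$ is equivalent to $v$ being a null vector for the Hermitian form $h$, and the problem becomes: find a nonzero $h$-null vector inside $E^2 \oplus E^3$.

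This last point is immediate from the signature conventions in Definition \ref{CFHS}. The decomposition $E_\C = \bigoplus_p E^p$ is $h$-orthogonal, and $h$ is positive definite on $E^2$ (even index) and negative definite on $E^3$ (odd index); as $\dim E^2 = \dim E^3 = 1$, the restriction $h|_{E^2 \oplus E^3}$ is a rank-$2$ Hermitian form of signature $(1,1)$, whose null cone is non-trivial. Concretely, choosing $e_2 \in E^2$ and $e_3 \in E^3$ and setting $v = e_2 + c\, e_3$ with $|c|^2 = h(e_2,e_2)/(-h(e_3,e_3)) > 0$ yields $h(v,v) = 0$ and $v \neq 0$. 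Finally, $\bar v \in \overline{E^2 \oplus E^3} = E^0 \oplus E^1$, which meets $E^2 \oplus E^3$ trivially, so $v$ and $\bar v$ are $\C$-linearly independent and $P = \C v \oplus \C \bar v$ is genuinely a $2$-plane. Then $P \in \mathcal{M}$ and $v \in P \cap (E^2 \oplus E^3)$ is nonzero, as required.

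The argument is entirely fiberwise linear algebra, so there is no analytic obstacle; the only points demanding care are the bookkeeping that pins down the signature $(1,1)$ of $h$ on $E^2 \oplus E^3$ from the parity conventions, and the identity $Q(v, \bar v) = -i\, h(v,v)$ translating symplectic isotropy into the Hermitian null-cone condition. Once these are in place, the existence of an $h$-null vector — hence the failure of the transversality that would have forced equality of the partial sum $\lambda_1 + \lambda_2$ with $\pi \cdot \deg(E^2 \oplus E^3)$ — follows at once.
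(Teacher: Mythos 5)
Your proof is correct and follows essentially the same route as the paper: both exhibit the real isotropic plane $P = \C v \oplus \C\bar v$ for an $h$-null vector $v \in E^2 \oplus E^3$, the paper taking $v = v_2 + v_3$ in a basis normalized so that $h(v_i,v_i) = (-1)^i$, which is your $v = e_2 + c\,e_3$ in disguise. You merely make explicit two points the paper leaves implicit — the reduction of isotropy to $Q(v,\bar v) = -i\,h(v,v) = 0$ and the linear independence of $v$ and $\bar v$ via $\bar v \in E^0 \oplus E^1$ — which is fine.
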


\begin{proof}
 This is a statement in linear algebra. We choose a orthogonal basis $(v_0,v_1,v_2,v_3)$ of $E_\C$, adapted to the decomposition $E_\C = \oplus_{i=0}^3 E^i$ such that:
\begin{itemize}
 \item $\overline{v_0} = v_3; \overline{v_1} = v_2$;
 \item $h(v_i,v_i) = (-1)^i$.
\end{itemize}
The plane $P$ generated by $v_2 + v_3$ and its conjugate $v_0 + v_1$ is real and isotropic; hence $P$ is in $\mathcal{M}$ and intersects the $2$-plane $E^2 \oplus E^3$.
\end{proof}

\paragraph{Hypergeometric cases}

Singular families of $3$-dimensional Calabi-Yau manifolds over $\mathbb{P}^1(\C)$ are studied in \cite{EnckStrat}. A table is given on page 11 of this paper and describes some numerical invariants attached to these families; there are 14 cases where the number of singularities is equal to $3$: they are called \emph{hypergeometric cases} and can be thought as smooth families over $X$. In \cite{Kont}, the Lyapunov exponents of these 14 families are computed by numerical experiments. The following has been observed: there are 7 \emph{good cases} and 7 \emph{bad cases}. 

For good cases, the sum of Lyapunov exponents $\lambda_1 + \lambda_2$ is rational (up to some normalization) and a formula involving the eigenvalues of the local monodromies (near the singularities) can be given. This does not work in bad cases. Our goal is to give some explanations of this phenomenon in the general framework of our paper.

\paragraph{Thin and thick monodromies}

In all 14 examples, the monodromy representation is Zariski dense in the symplectic group $\Sp(4,\R)$ and takes values in $\Sp(4,\Z)$. One says that the representation is \emph{thin} if its image has infinite index in $\Sp(4,\Z)$; it is \emph{thick} otherwise. In \cite{BravThom}, it was shown that 7 monodromies among the 14 are thin; it had been remarked by M. Kontsevitch that they correspond exactly to the 7 good cases of his numerical experiments.

\paragraph{Sketch of a proof}

The formula observed by M. Kontsevitch for good cases is essentially the equality \eqref{fund_equal}. We want to prove the following conjecture:

\begin{conj}
 There exists a closed invariant subset $\mathcal{M'}$ in $\mathcal{M}$ such that any $2$-plane $P$ in $\mathcal{M'}$ intersects trivially the $2$-plane $E^2 \oplus E^3$, if and only if, the monodromy representation is thin.
\end{conj}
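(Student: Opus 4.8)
The plan is to translate the existence of $\mathcal{M}'$ into a statement about the monodromy action on a flag variety, and then to read off the dichotomy thin/thick from the structure of its closed invariant sets. Since $X=\mathbb{P}^1(\C)-\{0,1,\infty\}$ is a $K(\pi,1)$ and the foliation on $\Gr(2,E_\C)$ is the flat one, the bundle trivializes over the universal cover, $\Gr(2,\pi^\ast E_\C)\cong\tilde{X}\times\Gr(2,\C^4)$, and a closed leaf-saturated subset $\mathcal{M}'\subset\mathcal{M}$ is exactly the datum of a closed $\Gamma$-invariant subset $K$ of the fiber of $\mathcal{M}$, namely the real Lagrangian Grassmannian $\mathsf{L}=\Sp(4,\R)/P$ (here $\Gamma=\rho(\pi_1(X))$ is Zariski dense in $\Sp(4,\R)$, and the real isotropic $2$-planes in the symplectic space form this flag variety). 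Fixing a base point and the developing map $\Phi:\tilde{X}\to\mathcal{D}$ into the weight-$3$ period domain, the Hodge plane $F^2=E^2\oplus E^3$ is itself a complex Lagrangian, $\Phi(\tilde{x})$, and the requirement that every $P\in\mathcal{M}'$ meet $E^2\oplus E^3$ trivially becomes: for every $\tilde{x}\in\tilde{X}$ the flat transport of $K$ remains transverse, after complexification, to $\Phi(\tilde{x})$. In short, $\mathcal{M}'$ exists if and only if some closed $\Gamma$-invariant $K\subset\mathsf{L}$ stays transverse to the \emph{entire} image of the period map; this is the exact higher-rank analogue of the Kleinian picture of Subsection 4.2, with $K$ in the role of $\Lambda(\Gamma)$ and $\Phi$ in the role of the map $\phi$ landing in $\Omega(\Gamma)$.

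For the direction ``$\exists\,\mathcal{M}'\Rightarrow$ thin'' I would argue through the Lyapunov formalism. Given such a $K$, choose by Remark \ref{supp_harm} a harmonic measure $\nu$ supported on the corresponding $\mathcal{M}'$; then $\operatorname{supp}T_\nu$ misses $\mathbb{P}(\hat{F})$, so Theorem \ref{thmconj_princ} gives $\delta_F=0$ and hence $\lambda(\nu)=\pi\cdot\deg(E^2\oplus E^3)$. Using that the action on $\mathcal{M}$ factors through the irreducible five-dimensional representation $\Sp(4,\R)\to\mathrm{SO}(3,2)$ on primitive bivectors, so that the strong irreducibility needed in Theorem \ref{equal_lambda} genuinely holds on the Plücker quadric, one upgrades this to $\lambda_1+\lambda_2=\pi\cdot\deg(E^2\oplus E^3)$, i.e.\ equality \eqref{fund_equal}. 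But the paper's strictness theorem (granting the non-compact framework of Subsection \ref{non_comp}) forces \eqref{eq:minoration} to be strict whenever $\Gamma$ is thick; equality can therefore occur only in the thin case, which yields thinness.

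For the hard direction, thin $\Rightarrow\exists\,\mathcal{M}'$, I would attempt to produce $K$ as a genuine limit set. First I would test whether these thin hypergeometric monodromies are Anosov with respect to the Siegel parabolic $P$; if so, the theory of Kapovich--Leeb--Porti and Guichard--Wienhard furnishes a $\Gamma$-equivariant continuous boundary map $\xi:\partial_\infty\Gamma\to\mathsf{L}$ with proper image $\Lambda_\Gamma\subsetneq\mathsf{L}$ together with a cocompact domain of discontinuity consisting of the Lagrangians transverse to all of $\Lambda_\Gamma$. I would then set $K=\Lambda_\Gamma$ and verify the transversality condition of the first paragraph: that every Hodge plane $\Phi(\tilde{x})$ lies in the $\Lambda_\Gamma$-transverse domain. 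This last verification is the precise translation of ``$\phi$ takes values in $\Omega(\Gamma)$'' from the Kleinian example, and it would close the argument through Proposition \ref{fund_prop}.

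The principal obstacle is exactly the construction and control of $K$ in the thin case, and I expect it to be very hard, consistent with the authors flagging it as a central open problem. Two difficulties interlock. First, it is not known that these thin subgroups of $\Sp(4,\Z)$ are Anosov, or otherwise admit a well-behaved proper limit set in $\mathsf{L}$: thin higher-rank groups need not be convex-cocompact, and mere infinitude of index provides no dynamical transversality. Second, even granting a proper $\Lambda_\Gamma$, one must show that the transcendental period map $\Phi$ never meets the non-transverse locus of $\Lambda_\Gamma$ over the whole thrice-punctured sphere; there is no formal reason for the Hodge filtration to avoid a limit set, and this is precisely where the arithmetic distinction between thin and thick must enter, presumably through a comparison of a fundamental domain for $\Gamma$ in $\Sp(4,\R)$ with the range of $\Phi$. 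A final technical point to settle throughout is the reducibility $\Lambda^2\C^4=\C\,\omega\oplus V_5$: one must work consistently on the primitive part $V_5$, i.e.\ the Plücker quadric, so that the strong-irreducibility hypotheses of Theorems \ref{equal_lambda} and \ref{thmconj_princ} are legitimately in force.
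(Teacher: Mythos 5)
The statement you are addressing is a conjecture, and the paper itself proves only the ``easy'' direction (existence of $\mathcal{M}'$ implies thinness); your proposal contains a genuine gap precisely there, because it is circular. In your second paragraph you derive thinness from ``the paper's strictness theorem,'' i.e.\ the assertion that \eqref{eq:minoration} is strict whenever the monodromy is thick. But inside this paper that strictness assertion is not an independent input: it is exactly the consequence of the proposition whose proof constitutes the easy direction of this conjecture (``Suppose that the representation is thick. Then $\mathcal{M}$ does not contain any strictly smaller closed invariant subset,'' combined with Lemma \ref{lin_alg_lem}). Invoking it assumes what is to be proved. The missing content is elementary and purely dynamical, and nowhere appears in your proposal: for any real isotropic $2$-plane $P$, the orbit $\Sp(E_x,\Z)\cdot P$ is dense in the set $\mathcal{M}_x$ of real isotropic $2$-planes (clear for rational $P$, then in general by translation); thickness means $\Gamma$ has finite index in $\Sp(E_x,\Z)$, so $\Sp(E_x,\Z)\cdot P=\cup_{i=1}^{r}\Gamma\cdot\gamma_i P$ and the closures $\overline{\Gamma\cdot\gamma_i P}$ are finitely many closed $\Gamma$-invariant sets covering $\mathcal{M}_x$; connectedness of $\mathcal{M}_x$ then forces $\overline{\Gamma\cdot P}=\mathcal{M}_x$, i.e.\ the foliated action on $\mathcal{M}$ is minimal, so the only candidate for $\mathcal{M}'$ is $\mathcal{M}$ itself, which Lemma \ref{lin_alg_lem} excludes. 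Note also that even granting your route, it is strictly weaker than this direct one: it requires the non-compact framework of Subsection \ref{non_comp}, which the paper explicitly flags as unestablished over the thrice-punctured sphere, plus your $V_5$-irreducibility patch to make Theorem \ref{equal_lambda} applicable (a fair point, since $\Lambda^2\C^4$ is reducible and the paper glosses over this), whereas the minimality argument needs no Lyapunov theory at all.

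On the hard direction (thin implies existence of $\mathcal{M}'$) you are in the same position as the authors: they only sketch ping-pong ideas following \cite{BravThom} and leave the problem open, and you correctly concede that your Anosov/limit-set program is unverified at both of its steps (the Anosov property of the thin hypergeometric groups relative to the Siegel parabolic, and the transversality of the whole period map image to the putative limit set). So no fault attaches there relative to the paper, and your first-paragraph translation of $\mathcal{M}'$ into a closed $\Gamma$-invariant subset of the Lagrangian Grassmannian transverse to the developed Hodge planes is correct and consistent with the suspension picture the paper uses implicitly. But as it stands your text proves neither direction: the forward implication rests on a circular citation in place of the density/finite-index/connectedness argument, and the converse is a research program, not a proof.
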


A proof of this conjecture will explain the numerical observations of \cite{Kont}. Using Lemma \ref{lin_alg_lem}, we can give a proof of the easy direction.

\begin{prop}
 Suppose that the representation is thick. Then, $\mathcal{M}$ does not contain any stricly smaller closed invariant subset.
\end{prop}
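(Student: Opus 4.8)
The plan is to transfer the problem into a single fiber and then appeal to the minimality of lattice actions on flag manifolds. Fix a base point $x\in X$ and write $V:=E_{\R,x}$ for the real symplectic space of dimension $4$ with form $Q_x$, so that the monodromy is a representation $\rho\colon\pi_1(X,x)\to\Sp(V,Q_x)\cong\Sp(4,\R)$ with image $\Gamma$. Parallel transport identifies the fiber of $\Gr(2,E_\C)$ over $x$ with the Grassmannian of $2$-planes of $V\otimes_\R\C$, and under this identification the trace $\mathcal{M}\cap\Gr(2,E_\C)_x$ is precisely the Lagrangian Grassmannian $\Lambda:=\Lambda(V,Q_x)$ of real isotropic $2$-planes (a real $2$-plane is isotropic for $Q_{x,\C}$ exactly when the corresponding plane of $V$ is isotropic for $Q_x$, hence Lagrangian). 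Since a closed invariant subset of the foliated space is a union of leaves, its trace on the fiber over $x$ is a closed $\Gamma$-invariant subset of $\Lambda$; conversely, working over a simply connected neighborhood of $x$ where the foliated space is a product and gluing by $\Gamma$-invariance, the leaf-saturation of a closed $\Gamma$-invariant subset of $\Lambda$ is closed and invariant. This yields a bijection between closed invariant subsets of $\mathcal{M}$ and closed $\Gamma$-invariant subsets of $\Lambda$, so the Proposition reduces to showing that $\Gamma$ acts minimally on $\Lambda$: the only nonempty closed $\Gamma$-invariant subset of $\Lambda$ is $\Lambda$ itself.

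I would then use the hypothesis: thickness means that $\Gamma$ has finite index in $\Sp(4,\Z)$, and since $\Sp(4,\Z)$ is a lattice in the simple, noncompact Lie group $\Sp(4,\R)$, so is $\Gamma$; by the Borel density theorem $\Gamma$ is moreover Zariski dense. Now $\Lambda=\Sp(4,\R)/P$ with $P$ the maximal (Siegel) parabolic stabilizing a Lagrangian, so the statement to prove is a special case of the classical fact that a lattice in a semisimple Lie group with no compact factors acts minimally on each of its flag manifolds $G/P$. I would first reduce to the full flag variety $G/P_0$, with $P_0$ a minimal parabolic: the forgetful map $G/P_0\to G/P$ is a continuous surjective $\Gamma$-equivariant fibration, so minimality on $G/P_0$ forces minimality on $\Lambda$, since the image of a dense orbit is dense.

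It then remains to prove that $\Gamma$ acts minimally on the Furstenberg boundary $G/P_0$, equivalently that the limit set of $\Gamma$ equals all of $G/P_0$. For a Zariski-dense subgroup the limit set is the unique minimal closed invariant subset and the action on it is minimal; the point is therefore to show that this limit set is the whole boundary. This is where the finite-covolume hypothesis is indispensable: the attracting fixed points of the proximal ($\R$-regular) elements of $\Gamma$ are dense in the limit set, and recurrence of the Weyl-chamber flow on the finite-volume quotient $\Gamma\backslash G$ is what forces these attracting points to be dense in the entire boundary. I expect this equality of the limit set with the full flag variety --- that is, minimality of a lattice action on the boundary --- to be the only substantial step; the rest is formal. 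It is also precisely the point that fails for a thin (Zariski-dense but infinite-index) subgroup, whose limit set is in general a proper subset of $G/P_0$, which is the mechanism underlying the conjectural converse. For the easy direction treated here I would simply invoke the minimality of lattice actions on flag manifolds as a known result.
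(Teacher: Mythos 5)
Your argument is correct, but the key step is carried out by a genuinely different route than the paper's. Both proofs share the (implicit, in the paper) reduction to a single fiber: a closed invariant subset of $\mathcal{M}$ is determined by its trace on $\mathcal{M}_x$, the Lagrangian Grassmannian of $(E_{\R,x},Q_x)$, and the point is minimality of the $\Gamma$-action there. Where you diverge is in how minimality is obtained. The paper gives a short, self-contained arithmetic argument: for any Lagrangian $P$, the orbit $\Sp(E_x,\Z)\cdot P$ is dense in $\mathcal{M}_x$ (clear for rational $P$, and in general by translation); by finite index one writes $\Sp(E_x,\Z)\cdot P=\cup_{i=1}^r\Gamma\cdot\gamma_i P$, so $\mathcal{M}_x$ is covered by finitely many closed $\Gamma$-invariant orbit closures, and connectedness of $\mathcal{M}_x$ forces a single orbit to be dense. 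You instead upgrade $\Gamma$ to a lattice in $\Sp(4,\R)$ and invoke the general theory of boundary actions: Borel density, identification $\mathcal{M}_x\cong\Sp(4,\R)/P$ with $P$ the Siegel parabolic, reduction to the full flag variety, and minimality of lattice actions on $G/P_0$ via the limit set (equivalently, via Furstenberg's unique stationary measure or recurrence of the Weyl-chamber flow). Your route buys generality -- it works for any lattice in any semisimple group without compact factors and makes transparent exactly which hypothesis fails for thin groups (the limit set being proper), which is indeed the mechanism behind the conjectural converse -- at the cost of importing substantially heavier known theorems; the paper's argument is elementary and exploits the specific arithmetic structure (rational Lagrangians and the $\Sp(4,\Z)$-orbit) instead of the full machinery. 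Two minor remarks: your detour through $G/P_0$ is unnecessary, since minimality of lattice actions holds directly on every flag manifold $G/P$; and, if anything, your use of uniqueness of the minimal set (the limit set) is tighter than the paper's final step, where the asserted ``partition'' into orbit closures need not consist of disjoint sets, so the connectedness argument as literally stated requires a small repair that your approach avoids.
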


\begin{proof}
Let $\Gamma$ denote the image of the monodromy in $\Sp(E_x)$ and let $P$ be an arbitrary real isotropic $2$-plane in $\Gr(2,E_x)$. We observe that the orbit $\Sp(E_x,\Z).P$ is dense (for the Hausdorff topology) in the set $\mathcal{M}_x$ of real isotropic $2$-planes. This is clear if $P$ is rational and is true in general using a translation. Since by assumption $\Gamma$ is of finite index in $\Sp(E_x,\Z)$, there exists a finite number of $\gamma_i$ in $\Sp(E_x,\Z)$ such that
$$ \Sp(E_x,\Z).P = \cup_{i=1}^r \Gamma.\gamma_i P.$$
Taking the closure, this gives a partition of $\mathcal{M}_x$ in a finite number of closed $\Gamma$-invariant subsets. By connectedness of $\mathcal{M}_x$, this is possible only if $\Gamma.P$ itself is already dense in $\mathcal{M}_x$. This concludes the proof of the proposition.
\end{proof}

The other direction is an interesting challenge. The idea goes as follows: we consider one of the 7 representations with thin monodromy. One has to have a good understanding of this representation in order to construct a proper closed invariant subset $\mathcal{M'}$ in $\mathcal{M}$ and then prove that the $2$-plane $E^2 \oplus E^3$ does not meet an arbitrary $2$-plane $P$ in $\mathcal{M'}$. For the first step, some ping-pong lemma arguments, as in \cite{BravThom}, should lead to a conclusion. 

It is not clear to us whether it is possible to compute things directly or if a clever argument is available for the second step.

\newpage

\bibliographystyle{alpha}
\bibliography{bib_harm.bib}

\begin{thebibliography}{EKMZ16}

\bibitem[Ahl64]{Ahlf_fin}
L.~V. Ahlfors.
\newblock {F}initely {G}enerated {K}leinian {G}roups.
\newblock {\em American Journal of Mathematics}, 86(2), 1964.

\bibitem[BT14]{BravThom}
C.~Brav and H.~Thomas.
\newblock Thin monodromy in {Sp}(4).
\newblock {\em Compositio Mathematica}, 150(3):333--343, 2014.

\bibitem[Can03]{Cand}
A.~Candel.
\newblock The harmonic measures of {L}ucy {G}arnett.
\newblock {\em Advances in Mathematics}, 176:187--247, 2003.

\bibitem[DD16]{DerDuj}
B.~Deroin and R.~Dujardin.
\newblock Complex projective structures: degree, {L}yapunov exponent, and
  harmonic measure.
\newblock {\em preprint}, 2016.

\bibitem[Der05]{Der_Levi}
B.~Deroin.
\newblock Hypersurfaces {L}evi-plates immergées dans les surfaces complexes de
  courbure positive.
\newblock {\em Annales Scientifiques de l'\'Ecole normale supérieure},
  38(1):57--75, 2005.

\bibitem[EKMZ16]{EKMZ}
A.~Eskin, M.~Kontsevitch, M.~M\"oller, and A.~Zorich.
\newblock Lower bounds for lyapunov exponents of flat bundles on curves.
\newblock {\em preprint}, 2016.

\bibitem[EKZ11]{EskKoZor_cyclic}
A.~Eskin, M.~Kontsevitch, and A.~Zorich.
\newblock Lyapunov spectrum of square-tiled cyclic covers.
\newblock {\em Journal of modern dynamics}, 5(2):319--353, 2011.

\bibitem[EKZ14]{EskKoZor_teichmuller}
A.~Eskin, M.~Kontsevitch, and A.~Zorich.
\newblock Sum of {L}yapunov exponents of the hodge bundle with respect to the
  {T}eichm\"uller geodesic flow.
\newblock {\em Publications mathématiques de l'I.H.\'{E}.S}, 120:207--333,
  2014.

\bibitem[ES05]{EnckStrat}
C.~Van Enckevort and D.~Van Straten.
\newblock Monodromy calculations of fourth order equations of {C}alabi–{Y}au
  type.
\newblock In {\em Mirror Symmetry V, the BIRS Proc. on Calabi–Yau Varieties
  and Mirror Symmetry, AMS/IP}, 2005.

\bibitem[Fil14]{Filip}
S.~Filip.
\newblock Families of {K}3 surfaces and {L}yapunov exponents.
\newblock {\em arXiv:1412.1779}, 2014.

\bibitem[For02]{Forni}
G.~Forni.
\newblock Deviation of ergodic averages for area-preserving flows on surfaces
  of higher genus.
\newblock {\em Annals of Mathematics}, 155(1):1--103, 2002.

\bibitem[Fur02]{Furm}
A.~Furman.
\newblock Random {W}alks on {G}roups and {R}andom {T}ransformations.
\newblock In {\em Handbook of Dynamical Systems}, volume~1, pages 931--1014.
  Elsevier, 2002.

\bibitem[Gar83]{Garn}
L.~Garnett.
\newblock {F}oliations, the {E}rgodic {T}heorem and {B}rownian {M}otion.
\newblock {\em Journal of Functional Analysis}, 51:285--311, 1983.

\bibitem[Hai08]{Hairer}
M.~Hairer.
\newblock Lecture notes: {E}rgodic theory for stochastic {PDE}s, 2008.

\bibitem[Hsu02]{Hsu}
E.~P. Hsu.
\newblock {\em Stochastic Analysis on Manifolds}, volume~38 of {\em Graduate
  Studies in Mathematics}.
\newblock American Mathematical Society, 2002.

\bibitem[KM16]{KapMol}
A.~Kappes and M.~M\"oller.
\newblock Lyapunov spectrum of ball quotients with applications to
  commensurability questions.
\newblock {\em Duke Mathematical Journal}, 165(1):1--66, 2016.

\bibitem[Kon12]{Kont}
M.~Kontsevitch.
\newblock Sum of {L}yapunov exponents for {VHS} of weight 3.
\newblock {\em unpublished}, 2012.

\bibitem[Kor69]{Koranyi}
Adam Korányi.
\newblock Harmonic {F}unctions on {H}ermitian {H}yperbolic {S}pace.
\newblock {\em Transactions of the American Mathematical Society},
  135:507--516, 1969.

\bibitem[KZ97]{KoZor}
M.~Kontsevitch and A.~Zorich.
\newblock Lyapunov exponents and hodge structures.
\newblock In {\em The mathematical beauty of physics, in memory of Claude
  Itzykson: Saclay, 5-7 June 1996. \emph{Advanced series in mathematical
  physic}}, volume~24, pages 318--332, 1997.

\bibitem[Li84]{Li_L1}
P.~Li.
\newblock Uniqueness of ${L}^1$ solutions for the {L}aplace equation and the
  heat equation on {R}iemannian manifolds.
\newblock {\em Journal of differential geometry}, 20:447--457, 1984.

\bibitem[Moc02]{mo_nilp}
T.~Mochizuki.
\newblock Asymptotic behaviour of tame nilpotent harmonic bundles with trivial
  parabolic structure.
\newblock {\em Journal of Differential Geometry}, 62:351--559, 2002.

\bibitem[Par03]{Parker}
J.~R. Parker.
\newblock Notes on {C}omplex {H}yperbolic {G}eomtry, 2003.

\bibitem[Str00]{Stroock}
D.~W. Stroock.
\newblock {\em An {I}ntroduction to the {A}nalysis of {P}aths on a {R}iemannian
  {M}anifold}, volume~74 of {\em Mathematical Surveys and Monographs}.
\newblock American Mathematical Society, 2000.

\bibitem[Voi02]{Voi1}
C.~Voisin.
\newblock {\em Hodge {T}heory and {C}omplex {A}lgebraic {G}eometry {I}}.
\newblock Cambridge studies in advances mathematics. 2002.

\end{thebibliography}

\end{document}